\def\bee{\begin{enumerate}}\def\eee{\end{enumerate}}
\def\bei{\begin{itemize}}\def\eei{\end{itemize}}
\newcommand{\nco}{\newcommand}
\def\CC{{\cal C}}
\def\R{\mathbb{R}}\def\C{\mathbb{C}}
\nco{\red}{\color{red}}
\nco{\blue}{\color{blue}}
\nco{\cyan}{\color{cyan}}
\nco{\brown}{\color{Magenta}}
\def\Blue#1{\blue #1\normalcolor}
\def\Red#1{\red #1\normalcolor}
\nco{\magenta}{\normalcolor}
\nco{\violet}{\color{violet}}
\nco{\redend}{\normalcolor}
\nco{\magentaend}{\normalcolor}
\def\inv#1{\frac{1}{#1}}
\def\card#1{|#1|}
\def\tr{{\rm tr}\,}
\def\ie{{\it i.e. }}
\def\ommit#1{{}}
\def\({\left(}
\def\){\right)}
\def\oh{\frac{1}{2}}
\def\ie{{\it i.e.,\/}\ }
\def\ie{{\rm i.e.,\/}\ }
\def\Diff{\mathrm{Diff}}
\def\dual{{swap\ }}
\def\duality{{swapping\ }}
\def\colour{colour}
\def\be{\begin{equation}}\def\ee{\end{equation}}
\def\bea{\begin{eqnarray}}\def\eea{\end{eqnarray}}
\def\pamatrix#1{\begin{pmatrix}#1\end{pmatrix}}
\def\smat#1{\mbox{\footnotesize{\mbox{$\begin{pmatrix}#1\end{pmatrix}$}}}}
\nco{\rnc}{\renewcommand}
 \rnc{\title}[1]{{\Large\bf\mbox{}\\\medskip#1\bigskip\medskip\\}}
 \rnc{\author}[1]{{\large #1\smallskip\\}}
 \nco{\address}[1]{{\em #1\medskip\\}}
\def\g{{\tt g}}\def\Gg{\gamma}
\newtheorem{mytheo}{Theorem}
\newtheorem{myprop}{Proposition}
\begin{document}


\begin{titlepage}
\begin{center}
\title{Maps, immersions and permutations}
\medskip
\author{Robert Coquereaux} 
\address{Aix Marseille Universit\'e, Universit\'e de Toulon, CNRS, CPT, UMR 7332, 13288 Marseille, France\\
Centre de Physique Th\'eorique (CPT)}
\centerline{and}
\medskip
\author{Jean-Bernard Zuber}
\address{
 Sorbonne Universit\'es, UPMC Univ Paris 06, UMR 7589, LPTHE, F-75005, 
Paris, France\\
\& CNRS, UMR 7589, LPTHE, F-75005, Paris, France
 }
\bigskip\medskip

\today

\begin{abstract}
\noindent {We consider the problem of  counting 
and of listing  topologically inequivalent ``planar" 
{4-valent} maps with a single component and a given number $n$ of vertices. 
This enables us to count and to tabulate immersions of a circle in a sphere (spherical curves),
{extending results by Arnold and followers}. 
Different options where the circle and/or the sphere are/is oriented are considered in turn,
following Arnold's classification of the different types of symmetries. We also consider 
the case of {\it bi{\colour}able} and bi{\colour}ed maps {or} immersions, where {\it faces} are bicoloured. 
Our method  extends to immersions of a circle in a higher genus 
Riemann surface.  There the bi{\colour}ability is no longer automatic and has to be assumed.
We thus have two separate countings in non zero genus, that of bicolourable maps 
and that of general  maps.
\\
We use a classical method of encoding 
maps in terms of permutations,  on which the constraints of ``one-componentness" and of 
a given genus may be applied. Depending on the orientation issue
and on the bicolourability assumption,
permutations  for a map with $n$ vertices live in $S_{4n}$ or in
$S_{2n}$. 

In a nutshell, our method reduces to the counting (or listing) of orbits of certain subset of $S_{4n}$ (resp. $S_{2n}$)
under the action of the centralizer of a certain element of $S_{4n}$ (resp. $S_{2n}$). This is achieved either by
appealing to a formula by Frobenius or by a direct enumeration of these orbits.\\
Applications to knot theory are briefly mentioned.} 
\end{abstract}
\end{center}

\vspace*{40mm}
\noindent
{\sl Keywords\/}:\\ Embedded graphs;  Knot diagrams;  Immersed curves; Closed curves; Topological maps.

\vspace*{05mm}
\noindent
Mathematics Subject Classification 2000:  05C10, 05C30, 57M25, 83C47

\end{titlepage}

\section{Introduction}

In the present paper we are interested in the problem of enumerating the (equivalence classes of) curves with $n$ double points that one can draw on the sphere  or on an orientable surface of genus $g$. Such curves may be regarded as the images of immersions of
a circle in that  surface.
In the present paper we shall be mainly dealing with immersions in a compact surface.
See Fig.\ \ref{imm2} for an illustration of the difference between immersions in the plane or in the sphere. 
The problem is tightly connected with the census of {\it knots} and {\it virtual knots} 
but we shall content ourselves with brief comments about knots. Recall that for non zero genus, (virtual) 
knot diagrams or drawings of
curves exhibit {\it virtual crossings} in addition to their regular crossings: the former may be regarded as artifacts due to
the projection on the plane of the figure, see \cite{Kauffman} and the example of Fig.\ \ref{virtual2}.\\
Since the curves  (or the knot diagrams),  like the surfaces themselves, can be oriented, the discussion and the results will naturally split into four cases (OO, UO, OU, UU) that we define now: 
a genus $g$ curve is the image of the circle under an immersion $S^1 \to \Sigma$,  the latter being an orientable surface of genus $g$, 
and,   if the curve is not simple (if it crosses itself),  the multiple points of the immersion should be double points
 with distinct tangents (in other words we consider generic closed curves). 
When $g = 0$, this is called a generic spherical curve.  Both the circle and the surface can be oriented. If $S^1$ is not oriented, one may consider the sets UU and UO of $\Diff (\Sigma)$-equivalent and  $\Diff^{+}(\Sigma)$-equivalent unoriented curves. If $S^1$ is oriented, one considers the sets OU and OO of $\Diff(\Sigma)$-equivalent and  $\Diff^{+}(\Sigma)$-equivalent oriented curves.  $\Diff^{+} (\Sigma)$ denotes the group of  orientation-preserving diffeomorphisms of the oriented surface $\Sigma$.
 For spherical curves,   these four types of immersions have been considered by previous authors,  \cite{Arnold, Arnold2, OEIS}.  Correspondingly, in knot theory
one may consider knots up to mirror symmetry, and oriented or unoriented. \\  
 Following Carter \cite{Carter}, who coined this adjective in the UU case, one says that two immersed curves are OO, UO, OU or UU  geotopic if they are equivalent in the previous sense.  Now it is clear that the operation of adding handles to a surface in which a circle is immersed defines immersed curves in higher genus surfaces. 
It is therefore natural to consider the following definition \cite{Carter}: two immersed curves are stably geotopic if and only if there is a collection of handles that can be added to either surface, or both, in such a way that the curves become geotopic on the resulting surfaces. 
In this paper we assume that the studied immersions are cellular, in the sense that the complement of each associated immersed curve is homeomorphic to a collection of open disks,
so that the classifications obtained in this paper when $g>0$ for the different kinds of immersions should always be understood up to stable geotopy (although this will not be in general repeated in the text).  In other words the genus given in our tables for an immersed curve with a given number of crossings is such that the chosen curve cannot be immersed in a surface of smaller genus. One could then use surgeries to obtain a classification for all generic immersed curves (see \cite{Bourgoin}).
\\
On top of  the  question of orientation, we introduce the issue of {\it bicolourability} of the curve. 
 By definition a curve is bicolourable if one can assign opposite colors to adjacent faces.
While in genus 0, any self intersecting curve may be 
bi{\colour}ed (with adjacent faces of opposite {\colour}s), it is no longer true for higher genus,
see Fig.\ \ref{virtual2} for an example. Moreover when a curve is bi{\colour}ed, the two possible {\colour}ings may be or not (topologically) equivalent, see Fig.\ \ref{imm2-swp}. 
This bi{\colour}ing is quite natural in the context of knot theory, where it amounts to considering
the curve as an {\it alternating} knot, see Fig.\ \ref{sh-faces} below; 
there are two ways of doing that, which may or not  lead to equivalent knots. 
We shall thus append a suffix $c$, $b$ or no suffix at all to the symbols OO, OU, etc: 
OOc will refer to (inequivalent) bi{\colour}ings of  immersions of an oriented circle in an oriented surface, OOb to bicolourable (but not
bi{\colour}ed) immersions, and OO alone to general, bicolourable or not, immersions. Likewise
for immersions of type UO, OU and UU. This results in $3\times 4=12$ different types of immersions,
and the reader who is eager to see numbers may jump to Tables \ref{TableXYZ1}, \ref{TableXYZ2} to
see their cardinals tabulated up to 10 crossings for all genera. The reader who prefers figures to
numbers is directed to Fig.\ \ref{ImmUU8} and \ref{ImmUU9v3a}-\ref{ImmUU9v3b} for a complete list of indecomposable irreducible spherical
curves of UU type, with  respectively $n=8$ and 9 crossings.

 \begin{figure}[bthp]
 \centering{\includegraphics[width=16pc]{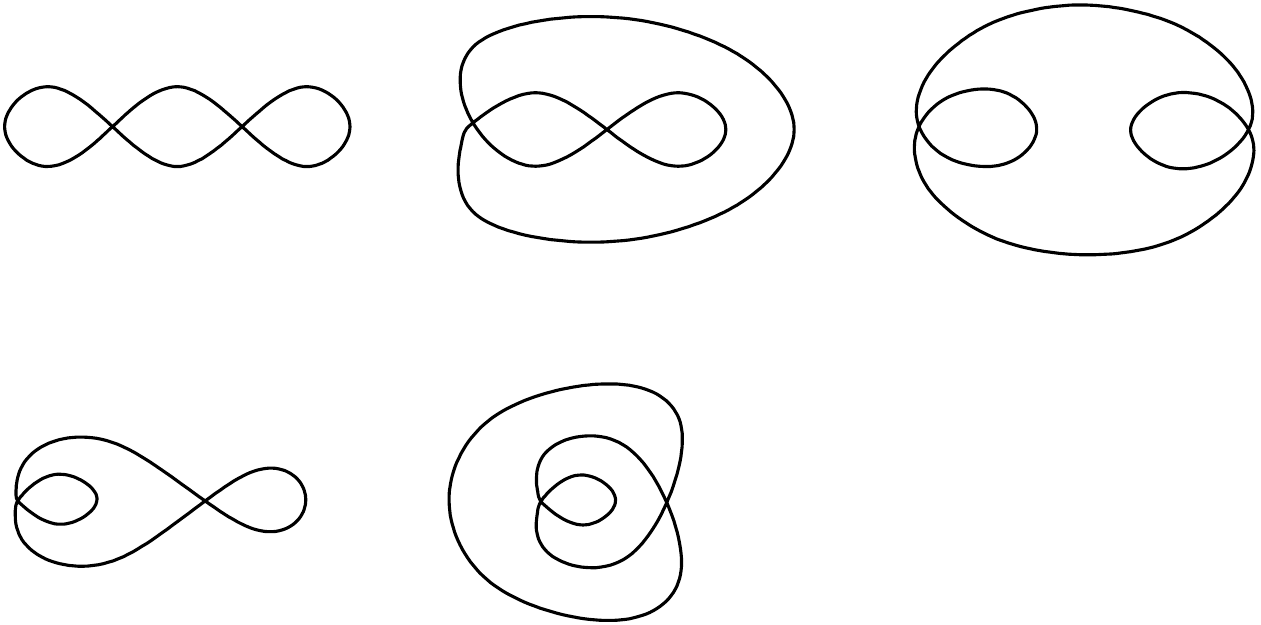} 
\caption{\label{imm2} Immersions of an unoriented circle with two double points. The five immersions in the plane give rise to two 
distinct immersions on the sphere, for instance the two lying on the left.}}
\end{figure}

 \begin{figure}[htbp]
 \centering{\includegraphics[width=7pc]{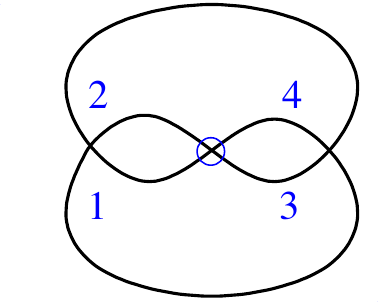}\qquad \qquad\raisebox{10pt}{\includegraphics[width=7pc]{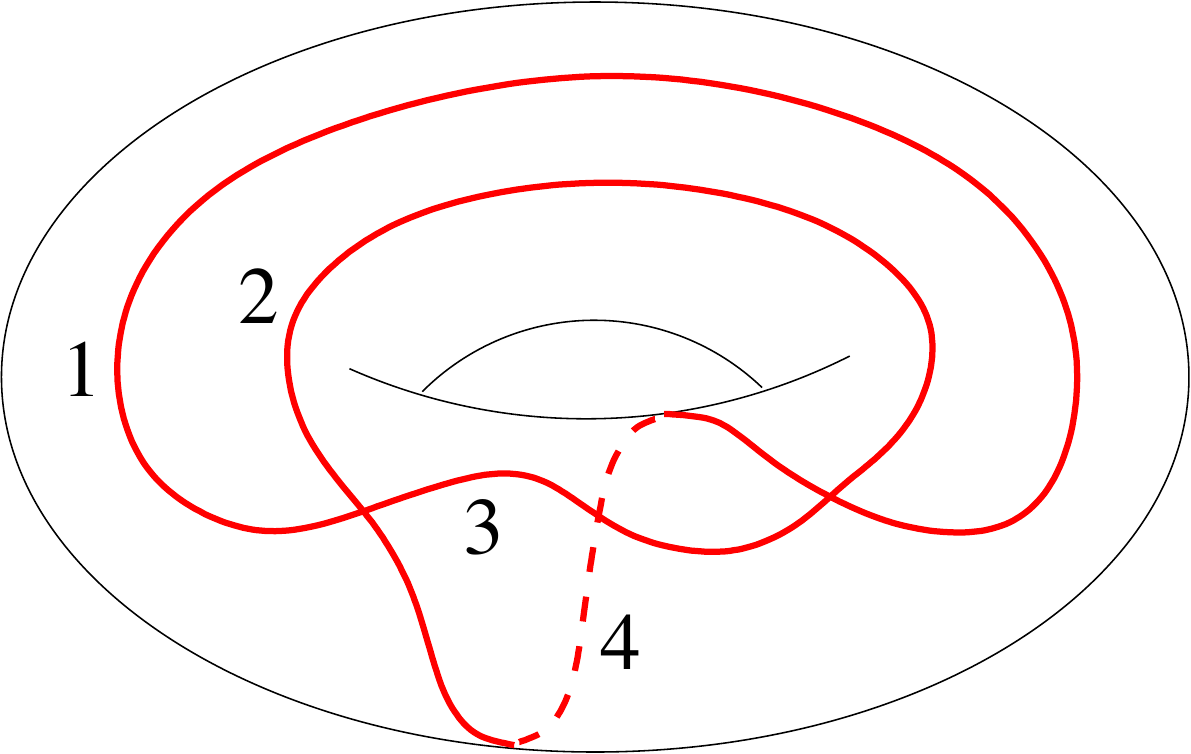}}
\caption{\label{virtual2} {The diagram on the left describes a genus 1  immersion
and is {\it not} bi{\colour}able. The little blue circle encircles a {\it virtual} crossing. On the right, the same immersed
in a torus.
}}}
\end{figure}

 \begin{figure}[htbp]
 \centering{\raisebox{5pt}{\includegraphics[width=5.4pc]{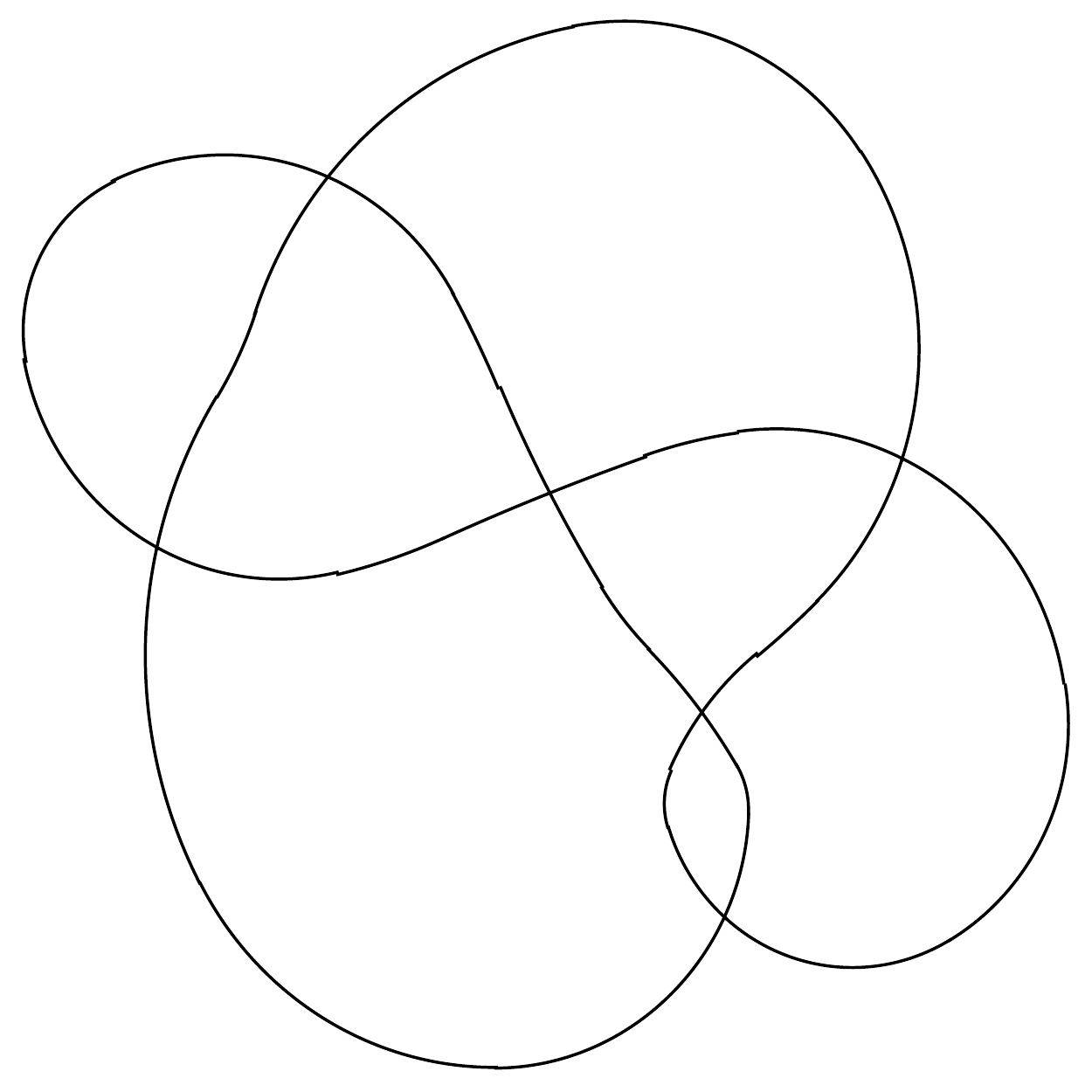}}\quad 
 \rotatebox{70}{\includegraphics[width=5pc]{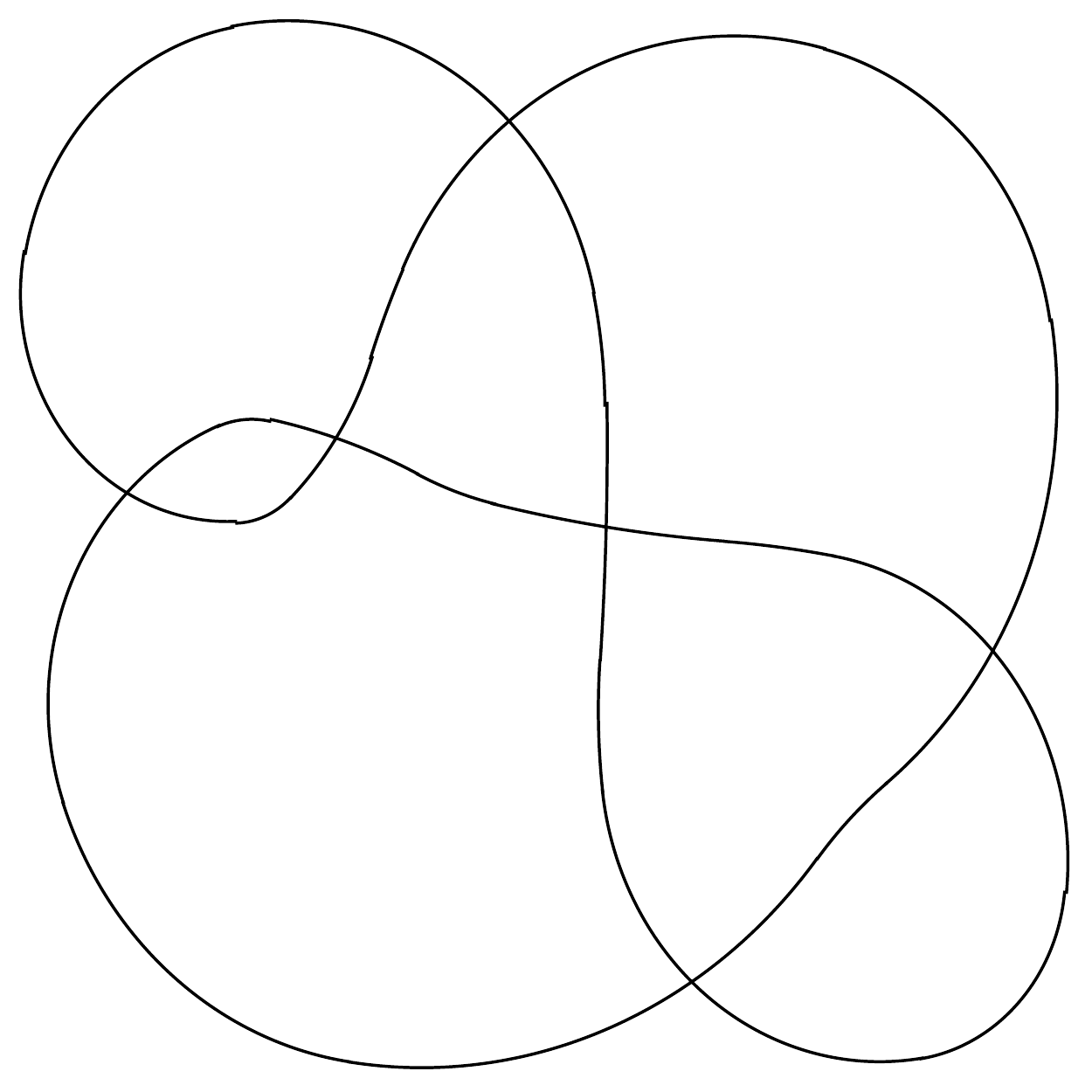}}
\caption{\label{imm6} Two immersions of an unoriented circle with $n=6$ double points. Distinct on an oriented sphere, but equivalent on an unoriented sphere.}}
\end{figure}

 \begin{figure}[htbp]
 \centering{\includegraphics[width=19pc]{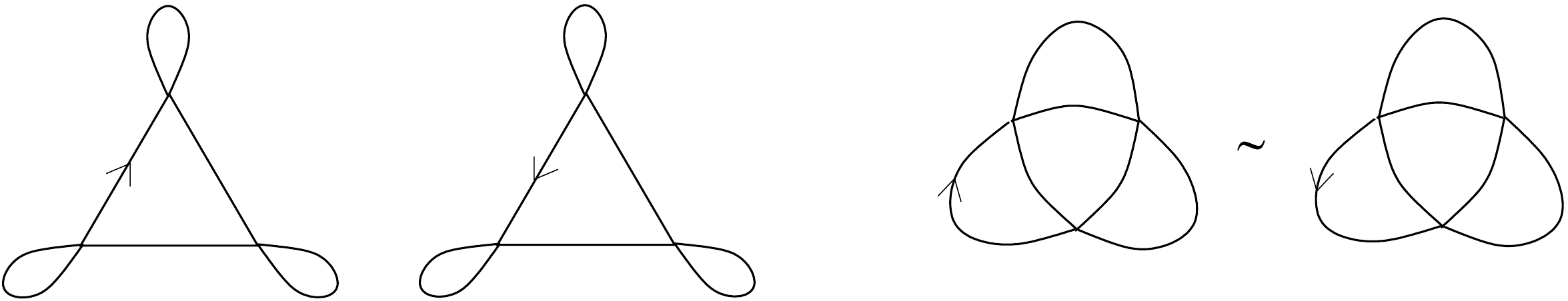}
\caption{\label{immrev} Immersions of an oriented circle.  Left : an $n=3$ immersion  not  equivalent to its reverse; in contrast, the trefoil
{\it is}  equivalent to its reverse.}}
\end{figure}

 \begin{figure}[bhtp]
 \centering{\includegraphics[width=18pc]{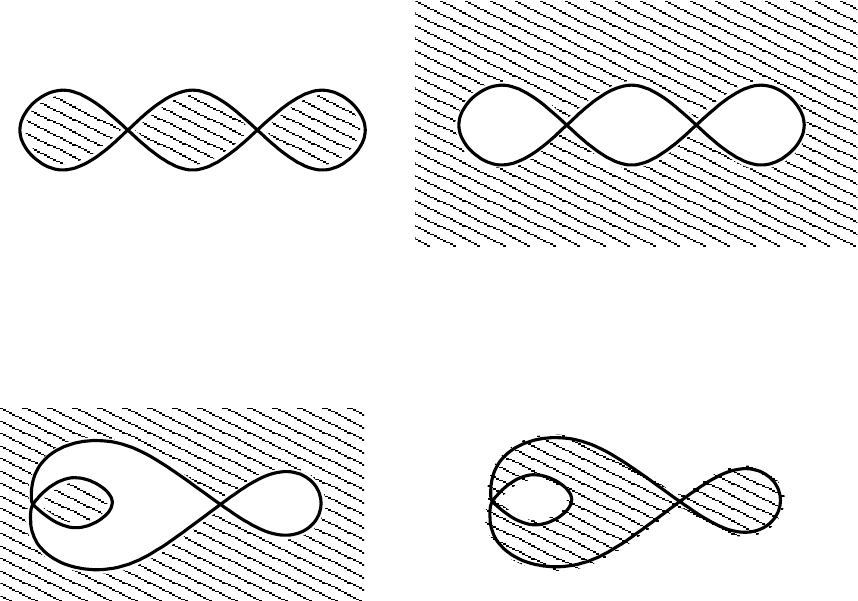} 
\caption{\label{imm2-swp} {Swapping {\colour}s : the  two diagrams on the top
are not equivalent, while the two diagrams on the bottom are (on the sphere, of course).
The first two contribute 2 to $\card{{\rm UUc}}$ and 1 to $\card{{\rm UUb}}$, the last two contribute 1 to both. 
}}}
\end{figure}

 For $g >0$ immersions they are few explicit results made available in the literature, see however\footnote{ The latter reference (that we discovered at a later stage of our work) contains, for the UU case, a table of isomorphism classes of immersions, with given genus and a given number of crossings (up to five), obtained using Gauss diagrams and a method described in \cite{Carter}.} \cite{ChmutovDuzhin} and \cite{CairnsElton}.

We shall regard curves with simple crossings (images of immersions) as {\it 4-valent maps}.
  Our immersions, being cellular, indeed define maps\footnote{They also define cellular embeddings of particular graphs called ``simple assembly graphs without endpoints'' in \cite{BDJSV}, see also \cite{Arredondo}.}: recall that a map is a graph embedded in a surface with its 2-cells (aka faces) homeomorphic to open disks. 
 The fact that faces do not contain handles will be used repeatedly in this paper, in particular when using the Euler formula to determine the genus of embedded curves. 
We should insist on the fact that in this work we consider circle immersions/maps, bi{\colour}able 
or not:  all the curves that we consider  have a single connected  component  (in the language of knot 
theory, we are interested in {\it knots}, not in {\it links}).\\
Matrix integrals in the large size limit which are quite effective for the counting of maps of
a given genus fail to distinguish maps with different numbers of components. 
We thus use an alternative method regarding maps as {\it combinatorial maps}, \ie maps described by 
pairs of permutations, following an old idea  by Walsh and Lehman \cite{WalshLehman}, or some variants.
The constraints of ``one-componentness" and of fixed genus may be easily enforced in that description. 
Depending on the orientation issue and on the bicolourability assumption, permutations for a map with $n$ vertices live in $S_{4n}$ or in $S_{2n}$.
 \\ 
This method, however, yields  labelled maps. To obtain unlabelled maps and immersions,
a quotient by a relabelling group has to be performed. This is achieved by considering
{\it orbits} of the combinatorial maps under the action of some subgroup of the permutation 
group. \\
The set-up of the paper is as follows. In Sect. 2, we present the simplest version
of the previous idea, where the two permutations encoding general immersions live in 
$S_{4n}$. The rapid growth of $(4n)!$  limits its practical use beyond $n=6$.
In Sect. 3, we consider bicolourable maps and introduce a better 
coding by pairs of permutations of $S_{2n}$.  Orbits of these pairs under the action of 
the hyperoctahedral group yield immersions of type OOc. {Sect. 4}  is devoted to
a study of the various types of bicoloured or bicolourable immersions that may be derived
from the OO type. We derive some relations between the numbers of these different types
(Theorem \ref{Theo3c}). In Sect. 5, we remove the assumption of bicolourability and 
encode the general maps and immersions by another choice for the  pair of permutations
of $S_{2n}$. Sect. 6 and 7 gather results, comments on the asymptotia and 
on the application to knot theory and our conclusions. Appendix  A gives some details on the algorithms
used for counting orbits, {\ Appendix B contains several tables of interest that will be described later},  and  Appendix  C reviews the connection between maps and Feynman diagrams of
matrix or scalar integrals.

A notational comment: in the following, we make use of two notations for the cardinal of a set $X$, either $|X|$ or $\#X$.

\section{UO immersions,  first method using permutations of $S_{4n}$}
\label{Xpmethod}

\subsection{The subset $X = [2^{2n}]$ of $S_{4n}$ and its orbits (``X method")} 
\label{X}
In the present section, we obtain the number of circle immersions of type UO,  with $n$ crossings, by counting the number of orbits
of solutions for a particular set of equations written in the group $S_{4n}$, under the action of a particular subgroup.  
We shall actually recover part of these results later, with other methods, which are faster (see Sect.\ \ref{Ypmethod} and \ref{Zpmethod}), but the technique presented here has an interest of its own.

\paragraph{Method: description of a curve by a permutation belonging to a particular conjugacy class $X$ of $S_{4n}$.\\}
In a first stage, we consider a labelling of {\it half-edges} of the maps. 
For a 4-valent map with $n$ vertices, there are $4n$ such half-edges, and we consider the symmetric group
$S_{4n}$ acting on these labels. 
 We choose 
  $\sigma\in [4^n]$ \ \footnote{The notation $[\cdot]$ refers to the conjugacy classes of the permutation group} to describe the clockwise linking pattern of  half-edges at the vertices, and  consider all possible pairings of half-edges 
(propagators in physicists' parlance) encoded in  permutations $\tau\in [2^{2n}]$. 
Note that this method of labelling half-edges is not original,
it has been used by Walsh and Lehman \cite{WalshLehman} and rediscovered later by Drouffe, as
quoted in \cite{BIZ}. 

\paragraph{Example of encoding}
See below in Fig.\ \ref{newfiga} the map encoded by 
\begin{center}$\sigma=(1,2,3,4)(5,6,7,8)(9,10,11,12)(13,14,15,16)$ 
\\$\tau = (1, 13)(2, 5)(3, 6)(4, 16)(7, 8)(9, 12)(10, 15)(11, 14)$\end{center}
in cycle notation.

\normalcolor\paragraph{Orbits of $X= [2^{2n}]$ for the adjoint action of the centralizer of an element of $[4^n]$}

\begin{mytheo}
\label{theo1}
Call  $\sigma = (1, 2, 3, 4)(5,6,7,8) \ldots (4n-3, 4n-2, 4n-1, 4n) \in [4^n] \subset S_{4n}$,  using cycle notation, and  $\CC_\sigma = C(S_{4n}, \sigma)$, the centralizer of $\sigma$ in $S_{4n}$.
Let $X =  [2^{2n}] $ denote the conjugacy class of $S_{4n}$ whose elements are products of $2n$ transpositions. 
Then we have :\\
Circle immersions of type UO, \ie immersions of the unoriented circle in an orientable and oriented surface of genus $g$, are in bijection with the orbits of  $\CC_\sigma$ acting by conjugation  on the set of permutations $\tau$ that belong to $X$ and solve the simultaneous equations:
 \bea
    \nonumber
   &\sigma^2 \tau \in [(2n)^2] & \qquad\qquad\qquad\,{\rm(I)}\qquad\qquad{\rm one-componentness}\\
   \nonumber 
  &c(\sigma \tau) =  n+2 - 2g &\qquad\qquad\qquad{\rm (II)}_{{g}}\qquad\qquad{\rm genus \ condition} 
     \eea
 where $c(x)$ is the function giving the number of cycles (including singletons) of the permutation $x$. 
\end{mytheo}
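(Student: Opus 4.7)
The plan is to set up a Walsh--Lehman style encoding of labelled $4$-valent maps by pairs of permutations in $S_{4n}$, translate each of the required topological data (number of faces, connectedness, circle-immersion structure) into an explicit cycle-type condition on these permutations, then pass to $\CC_\sigma$-orbits to cut down from labelled to unlabelled objects.

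First I would fix a labelling of the $4n$ half-edges and encode the clockwise cyclic order of half-edges around each vertex by a permutation $\sigma \in [4^n]$, and the pairing of half-edges into edges by a permutation $\tau \in [2^{2n}]$. Any two elements of $[4^n]$ are conjugate, so I may fix once and for all the canonical representative $\sigma = (1,2,3,4)\cdots(4n-3,4n-2,4n-1,4n)$ of the theorem; each labelled $4$-valent map embedded in an oriented surface (with its clockwise frame) is then captured by some $\tau \in [2^{2n}]$. Two pairs $(\sigma,\tau)$ and $(\sigma,\tau')$ describe the same unlabelled map iff some relabelling $\rho \in S_{4n}$ of the half-edges takes one to the other, and the requirement $\rho\sigma\rho^{-1}=\sigma$ pins $\rho$ down to belong to $\CC_\sigma$. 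So unlabelled maps correspond bijectively to $\CC_\sigma$-orbits on eligible $\tau$'s.

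Next I would translate the geometric constraints into equations (I) and (II)$_g$. The standard combinatorial-maps argument identifies faces of the embedded map with cycles of the face permutation $\sigma\tau$: starting from a corner, $\tau$ traverses the incident edge and $\sigma$ rotates to the next corner of the same face. With $V=n$ vertices and $E=2n$ edges, Euler's formula $V-E+F=2-2g$ gives $c(\sigma\tau)=n+2-2g$, which is condition (II)$_g$. For the immersion/one-componentness condition, I would use that at each crossing $(a,b,c,d)$ the curve goes \emph{straight through}, pairing opposite half-edges; and the permutation $\sigma^2 = (a,c)(b,d)$ at that vertex is exactly this ``straight-through'' pairing. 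Following the image curve amounts to alternately applying $\tau$ (cross the edge) and $\sigma^2$ (continue straight at the next crossing), so the sequence of half-edges visited is a cycle of $\sigma^2 \tau$. A single connected closed curve with $n$ double points has $2n$ edges, each traversed once in each of the two possible orientations of the circle, forcing $\sigma^2 \tau$ to consist of exactly two cycles of length $2n$, one per orientation. This is condition (I); conversely, a pair satisfying (I) yields a single-component straight-through circle immersion.

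Finally, for the UO specialisation, the surface orientation is already built into the fixed clockwise convention used to define $\sigma \in [4^n]$, while the circle is unoriented and no further identification of the two $2n$-cycles of $\sigma^2\tau$ is needed (that swap would be the extra quotient marking the UO$\rightsquigarrow$OO distinction). The main obstacle I expect is not any single computation but the careful bookkeeping needed to show that $\Diff^+(\Sigma)$-equivalence of two embedded immersions corresponds exactly to $\CC_\sigma$-conjugacy of the associated $\tau$'s: one must argue that any orientation-preserving homeomorphism between embeddings induces a relabelling of half-edges that commutes with $\sigma$ and carries $\tau$ to $\tau'$, and conversely that every $\rho \in \CC_\sigma$ arises this way, with no hidden identifications coming from the two orientations of the underlying unoriented circle.
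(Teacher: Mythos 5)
Your proposal is correct and follows essentially the same route as the paper's proof: the Walsh--Lehman encoding by $(\sigma,\tau)$, the identification of faces with cycles of $\sigma\tau$ plus Euler's formula for (II)$_g$, the interpretation of $\sigma^2\tau$ as the straight-through traversal whose two $2n$-cycles (one per circle orientation) give condition (I), and the passage to $\CC_\sigma$-orbits to obtain unlabelled UO maps. The only difference is that you spell out more explicitly the correspondence between $\Diff^+$-equivalence and $\CC_\sigma$-conjugacy, a point the paper treats only heuristically after its (quite terse) proof.
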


\begin{proof}
That labelled maps are in one-to-one correspondance with pairs $(\sigma,\tau)$ has
been known for long \cite{WalshLehman}.  The sequence of  labels as one goes 
 across the crossings is described by the permutation $\sigma^2 \tau$, and imposing
 condition (I) ensures that  the curve has a single component (hence also that the graph is connected).
Condition (II)$_g$ follows from Euler relation, if one realizes that the number of faces 
of the map is just the number of cycles $c(\sigma\tau)$, (another observation made
by many previous authors\dots). 
  A change of labels by $\gamma\in S_{4n}$ acts on  $\sigma$ and $\tau$ by
 conjugation: $(\sigma,\tau)\to (\sigma^\gamma, \tau^\gamma)$, with 
 $\alpha^\gamma:= \gamma \alpha\gamma^{-1}$. 
  The form of $\sigma$ as well as 
  conditions on permutations $\tau$ of the type (I), (II)$_{{g}}$, (I)$\cap$(II)$_{{g}}$,
    are invariant under the action of any $\gamma$ in the centralizer 
   $\CC_\sigma$ of $\sigma$,  \ie 
    \be\nonumber \tau \ {\rm satisfies\ (I)\ and/or\ (II)}_{{g}}\ ,\ \gamma\in \CC_\sigma \quad  
   \Longrightarrow\quad     \tau^\gamma\ {\rm satisfies\ it\ too\,. }\ee
   \end{proof}
   Heuristically,  $\CC_\sigma$ is the group of reparametrizations (relabellings) of the edges 
   of the diagram that leave the pattern of edges around each vertex unchanged. Quotienting by that 
   group, \ie considering its orbits  for the adjoint action  thus enables one to go from labelled  maps to unlabelled, topologically distinct maps.
 Finally note 
 that the definition of $\sigma$ as describing the, say,  {\it clockwise} linking pattern at 
vertices has singled out an orientation of the surface, while no information about the
orientation of the circuit described by $\sigma^2\tau$ is provided: 
the maps are naturally asssociated 
with immersions of an {\it unoriented} circle in an {\it oriented} surface, hence of type UO in 
our nomenclature. 

  We shall use the following notations for the relevant subsets of $X=[2^{2n}]$: 
     \begin{equation*}
     \begin{split}
     X'=&\{ \tau\in [2^{2n}] \ |\   \sigma^2\,\tau \in [(2n)^2] \} \\
     X'_g=&\{ \tau\in [2^{2n}] \ |\  \sigma^2\,\tau \in [(2n)^2]\ \ \text{and} \ \sigma\,\tau\ {\rm has \ } n+2-2g\ {\rm cycles} \}\,.
     \end{split}
       \end{equation*}
 In particular we denote $X^{\prime \prime} = X^\prime_0$, corresponding to {\it planar} 
 (in fact spherical) maps.
   The family of sets $X^\prime_g$ is a partition of $X^\prime$, and the set of orbits of the latter (identified with circle immersions), for the adjoint action of the subgroup $\CC_\sigma$,  is also partitioned into orbits corresponding to the various circle immersions of genus~$g$.

Remarks.\\ (i) In  the group $S_{4n}$,  the function $c$ is related to the ($n$-independent)  length function $\ell$ by $\ell(x) = 4n - c(x)$. 
Therefore equation (II)$_{{g}}$ also reads $\ell(\sigma\tau) = 3n -2+2g$.
\\
 (ii) In the wording of the theorem we made a convenient choice for $\sigma$;  this is actually irrelevant since the choice amounts to
  labelling the half-edges in a specific way.\\
  (iii) An arbitrary curve has $c(\sigma\tau)=n+2-2g\ge 1$,  therefore the possible values of  $g$ are such that  $2g \leq n+1$.

\paragraph{Examples.}
Let us choose $n=4$, then\\ $\sigma = (1,2,3,4)(5,6,7,8)(9,10,11,12)(13,14,15,16)$ in cycle notation, equivalently\\ $\sigma =  [2, 3, 4, 1, 6, 7, 8, 5, 10, 11, 12, 9, 14, 15, 16, 13 ]$ in list notation.

First example: $\tau = (1, 13)(2, 5)(3, 6)(4, 16)(7, 8)(9, 12)(10, 15)(11, 14)$. \\ 
One checks that $\sigma^2 \tau = (1, 15, 12, 11, 16, 2, 7, 6)(3, 8, 5, 4, 14, 9, 10, 13) \in [8^2]$, 
 so $\tau$ obeys condition (I) and therefore encodes an immersion (possibly non-spherical). One then evaluates $\sigma \tau = (1, 14, 12, 10, 16)(2, 6, 4, 13)(3, 7, 5)(11, 15)$; its number of cycles is $6$ (there are two unwritten\footnote{Since $n$ is fixed, it is unnecessary to write explicitly the 1-cycles when using the cycle notation, but one should remember that the function $c(x)$ should count the total number of cycles !} singletons: (8) and (9), so that the genus is $0$, and the immersion is actually spherical. The immersion encoded by permutation $\tau$ is given in Fig.\ \ref{newfiga}. Notice that $1$-cycles give rise (or come from) {\it kinks}, also known as {\it simple loops}.  
 
   \begin{figure}[htbp]
 \centering
\includegraphics[height = 5pc]{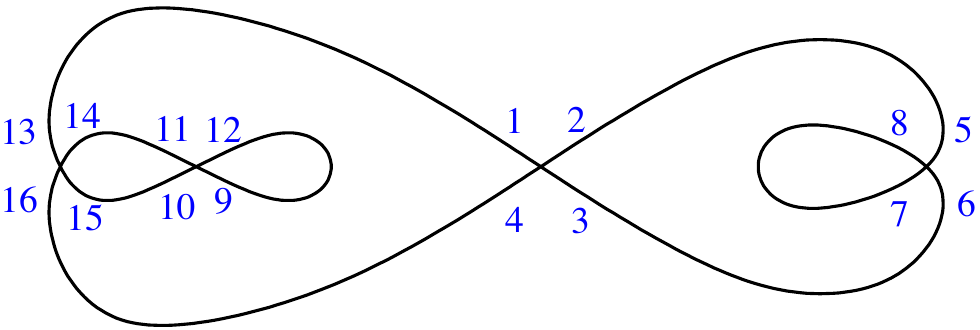}  
\caption{\label{newfiga} The diagram encoded by \\ $\tau = (1, 13)(2, 5)(3, 6)(4, 16)(7, 8)(9, 12)(10, 15)(11, 14)$.}
\end{figure}

Second example: $\tau = (1, 8)(2, 3)(4, 16)(5, 13)(6, 12)(7, 14)(9, 15)(10, 11)$. \\ One checks that  $\sigma^2 \tau = (1, 6, 10, 9, 13, 7, 16, 2)(3, 4, 14, 5, 15, 11, 12, 8) \in [8^2]$, but this time $\sigma \tau = (1, 5, 14, 8, 2, 4, 13, 6, 9, 16)(7, 15, 10, 12)$ which has two 1-cycles (3) and (11),  and therefore a number of cycles equal to $4$, so the permutation $\tau$ describes an immersion in a surface of genus 1. The encoding is made explicit in Fig.\ \ref{newfigb}. Notice that this circle immersion has four real crossings, as expected, but also one virtual one.
  \begin{figure}[htbp]
 \centering
\includegraphics[height = 8pc]{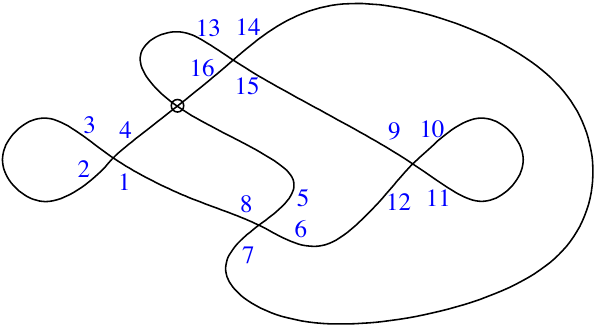}  
\caption{\label{newfigb} The diagram encoded by\\ $\tau = (1, 8)(2, 3)(4, 16)(5, 13)(6, 12)(7, 14)(9, 15)(10, 11)$. \\ The virtual crossing is indicated 
by an open circle.}
\end{figure}


\subsubsection{Counting orbits}  
  One would like to count the orbits for the $\CC_\sigma$ action on the sets $X$, $X^\prime$, $X^\prime_{g}$ and in particular on $X^{\prime\prime}=X^\prime_{0}$.
      How to find {\it a priori} the number and the {\it lengths} of the $\CC_\sigma$ orbits ?
    \paragraph{Burnside's lemma} 
    asserts that the number of orbits in, say, $X^\prime$ is related to the total number $\sum_\kappa \card{X'^\kappa}$ 
    of fixed points in the action of $\kappa\in \CC_\sigma$  acting in $X^\prime$, 
    \ie the number of pairs $(\kappa,\xi)$ such that $\kappa \xi=\xi \kappa$, 
     by  
    \be \card{X' /\CC_\sigma}=\#\,\CC_\sigma\mathrm{-orbits\ in\ } X'= \frac{\sum_\kappa \card{X'^\kappa}}{\card{\CC_\sigma}}\,.\ee
   This implies, however, the computation of $\card{X'} \times \card{\CC_\sigma}$ pairs of products $(\kappa\xi,\xi\kappa)$,
    which  becomes prohibitively large  for $n\ge 6$. 
    
       \def\orb{{\rm orb\,}} \def\Cl{{\rm Cl}}
   \paragraph{Orbits, double classes  and a formula by Frobenius.}
   Let us first state a simple but useful theorem (that belongs to the folklore)
\begin{mytheo}
\label{Coqtheo} Let $G$ be a finite group and  $H$ be a subgroup of $G$.
Take $x \in G$ and call $\Cl(x)$ its conjugacy class.
Then the  orbits for the adjoint action of $H$ on $\Cl(x)$ are in one-to-one correspondence with 
double cosets $H  \backslash G /  K$ where $K=C(G,x)$  is the centralizer of $x$ in $G$. 
\end{mytheo}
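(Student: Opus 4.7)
The plan is to reduce the statement to the standard identification of a conjugacy class with a coset space, and then to recognize the $H$-orbits as double cosets.

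First, I would invoke the orbit–stabilizer principle: since $G$ acts transitively on $\Cl(x)$ by conjugation and the stabilizer of $x$ is precisely $K=C(G,x)$, the map
\[
\Phi \colon G/K \longrightarrow \Cl(x),\qquad gK \longmapsto gxg^{-1},
\]
is a well-defined bijection. Moreover, $\Phi$ intertwines the actions: the conjugation action of $G$ (and hence of the subgroup $H$) on $\Cl(x)$ corresponds, via $\Phi$, to the left-translation action of $G$ on $G/K$, because $h(gxg^{-1})h^{-1} = (hg)x(hg)^{-1} = \Phi(hg\,K)$.

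Next, I would identify the $H$-orbits on $G/K$ under left multiplication with the set of double cosets $H\backslash G/K$. Indeed, two cosets $g_1K$ and $g_2K$ lie in the same $H$-orbit iff $g_2K = hg_1K$ for some $h\in H$, which by definition means $g_1$ and $g_2$ belong to the same double coset $Hg_1K = Hg_2K$. Conversely each double coset $HgK$ is exactly the union of the $H$-translates of $gK$. Thus the assignment $HgK \longmapsto H\cdot(gK)$ is a well-defined bijection between $H\backslash G/K$ and the set of $H$-orbits on $G/K$.

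Composing the two bijections, $HgK \longmapsto \{h g x (hg)^{-1} : h\in H\}$, gives the claimed one-to-one correspondence between $H\backslash G/K$ and the orbits of $H$ acting by conjugation on $\Cl(x)$. The only step requiring care is checking that $\Phi$ is well-defined and intertwines the actions, which is a short calculation; no serious obstacle arises, and the proof is essentially a two-line combination of classical facts.
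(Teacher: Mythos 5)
Your proof is correct and rests on the same core computation as the paper's: the identity $g'xg'^{-1}=hgxg^{-1}h^{-1}\iff g'^{-1}hg\in K\iff g'\in HgK$. The paper carries this out directly on elements of $\Cl(x)$, whereas you factor it through the equivariant bijection $G/K\cong\Cl(x)$; this is only a presentational difference.
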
   
\begin{proof}
Let $x\in G$. Then $y,y'\in \Cl(x)$ belongs to the same $H$-orbit iff $\exists h\in H$:
$y'=h y h^{-1}$, but $y=gxg^{-1}$ and $y'=g'xg'^{-1}$, hence  $g'xg'^{-1}=h gxg^{-1} h^{-1}$
or $g'^{-1}h g x= x g'^{-1}h g $, 
 from which it follows that $k:= g'^{-1}h g\in K:=C(G,x)$ and $g' =h g k^{-1} \in H g K$. 
 \end{proof}

The counting of $H$-orbits in  $\Cl(x)$ thus amounts to the counting of these double cosets. 
     Frobenius \cite{Frobenius} has given a formula for the number of double cosets $H\backslash G/K$. 
     In essence his method consists
     in computing in two different ways the number of solutions of equation $h g k=g$ 
      with $g\in G$, $h\in H$ and $k\in K$, with the result that
     \be\label{frobenius} 
     \card{H\backslash G/K} \,=\, \frac{\card{G}}{\card{H}\,\card{K} }\sum_\mu \frac{\card{H_\mu}\,\card{K_\mu}}{\card{G_\mu}}\ee
   where the sum runs over conjugacy classes $G_\mu$ of $G$, $H_\mu=H\cap G_\mu$ and $K_\mu=K\cap G_\mu$. 

   We are going to make repeated use of this connection between orbits and double classes and of Frobenius'  formula.
   In the problem at hand, $G=S_{4n}$,    there is a one-to-one correspondence between the orbits of 
   $\tau\in \Cl=X=[2^{2n}]$ under the action of $H=\CC_\sigma=C(S_{4n}, \sigma)$ and double cosets of $S_{4n}$ of the form 
     $\CC_\sigma \backslash S_{4n}/\CC_\tau$ with  $\CC_\tau = C(S_{4n}, \tau)$.
In this particular case of permutations, Frobenius'  formula is easy to apply without calculating 
     explicitly the intersections: classes $G_\mu$ are indexed by partitions, and $H_\mu$, $K_\mu$ 
     can be simply deduced from the knowledge of the cyclic structure of their own conjugacy classes.
  Note that this method allows one to  count the   $\CC_\sigma$-orbits in $X$, but not those in $X^\prime$ or in $X^{\prime\prime}$.

 \paragraph{Poor man's method.}
In the latter case, the same
 group theoretical considerations are not applicable, and we resort to a direct construction 
   of these orbits on a computer. This  ``brute force"  method has the merit of giving not only the
   number and lengths of orbits, but also an explicit representative of each of them, thus providing a {\it catalogue} 
   of the corresponding maps or immersions.
  In practice, this   method may be used up to $n=6$. For higher values, we resort 
  to  alternative methods presented in the next sections. 
       
\subsubsection{Orbits lengths}

The length of orbits, for the $\CC_\sigma$ action on any of the sets $X$, $X^\prime$ or $X^\prime_g$, is not constant.
For instance, when $n=4$ there are $121$ orbits for the $\CC_\sigma$ action on $X^\prime$ ($21$ of genus $0$, $64$ of genus $1$, and $36$ of genus $2$), but they are not all of the same length: $92$ are of maximal size (namely $6144$, the order of $\CC_\sigma$), $23$ are of size $\card{\CC_\sigma}/2$, and $6$ are of size $\card{\CC_\sigma}/4$). This corresponds to the fact that the centralizer $C(\CC_\sigma,\tau)$, in $\CC_\sigma$, of a permutation $\tau$ describing some specific immersion is not necessarily trivial, and  $\#\mathrm{Orb}(\tau) = \card{\CC_\sigma}/\card{C(\CC_\sigma, \tau)}$.
Existence of ``symmetries'', for a specific immersion, is measured, or actually defined, by $C(\CC_\sigma, \tau)$. The order 
$\omega$ of this group will not be given in our tables, but it is easy to obtain for every particular case.
For large $n$, almost all orbits have trivial stabilizers \cite{RW}, and an  estimate for the total number of immersions, including all values of $g$, is asymptotically given by $\card{X^\prime}/ \card{\CC_\sigma}$, equal to $\tfrac{(4n-2)!!}{4^n n!}$, when $n>2$ (see below, Appendix C and Table \ref{TableX}).

\subsection{Results}
   \paragraph{The group $\CC_\sigma$.}
      Given $\sigma\in [4^n]$, \ie a product of $n$ cyclic permutations on $n$ disjoint sets of 4 objects, 
      its centralizer $\CC_\sigma$  is made of cycles operating on the $n$ same sets of 4 objects, times
      any permutation of these $n$ cycles.
      Whence the order
      $$ \card{\CC_\sigma}= 4^n \, n!\,, $$
      i.e. $\card{\CC_\sigma}={4, 32, 384, 6144, 122\,880, 2\,949\,120\cdots}\quad {\rm for}\ n=1,2,3,4,5,6,$ see Table \ref{TableY}. 
      \\ 
      \normalcolor
  \paragraph{The set $X=[2^{2n}]$ and its $\CC_\sigma$-orbits.}   
  \normalcolor  {A standard result is that $\card{ X} = (4n-1) !!$}.

      How many orbits are there when $\CC_\sigma$ acts by conjugation on the class  $X=[2^{2n}]$~? 
     By use of Frobenius'  formula for double cosets (\ref{frobenius}) 
     we find that for $n=1,2,\cdots,9$, there are
         \be 
     \# \CC_\sigma{\rm-orbits\ in\ } X= 2, 10, 54, 491, {6430 } , 119\,475 , 2\,775\,582 , 76\,733\,201, 2\,439\,149\,685  \,.\ee
     One is of length 1 : the orbit of $\sigma^2$. 
     
      \paragraph{The set $X^\prime$ and its $\CC_\sigma$-orbits.}   We prove in Appendices  
      C.2 and C.3, using 
      a simple integral calculation or a purely combinatorial argument,
           that $\card{ X^\prime} = (4n-2)!!$.
       Acting on that $X^\prime$, $\CC_\sigma$ has a number of orbits given by 
     \be  \# \CC_\sigma{\rm-orbits\ in\ } X'= 1,3,13,121,1538, 28\,010, \cdots 
     \ee

Taking $n=4$ for example,   using the orbit stabilizer theorem and denoting as above by $\omega$ the order of the 
centralizer $C(\CC_\sigma,\tau)$, one finds that
there are $92$ orbits in $X^\prime$ with $\omega=1$, $23$ orbits with $\omega=2$ and 6 orbits with $\omega=4$, a total of $121$.
One checks that  $92\card{\CC_\sigma} + 23  \card{\CC_\sigma}/2 + 6  \card{\CC_\sigma}/4 = 645120 = \card{X^\prime}$, as it should.
  Moreover $\#( \CC_\sigma {\backslash} S_{4n}/ \CC_\tau) = 491$ corresponding to the $\CC_\sigma$-orbits of $X$,  but only $121$ correspond to orbits of $X^\prime$.


    \paragraph{The number of $\CC_\sigma$-orbits in $X^{\prime\prime}$.}
    Among the $\CC_\sigma$ orbits in $X^{\prime}$ we pick those that are such that $\sigma \tau$ has $n+2$ cycles (condition (II)$_0$ for genus 0).
     We find a number of relevant orbits equal to    $1,2,6,21,99,588,\cdots$.
  As discussed above, those are the numbers of immersions with $n$ double points
      of an unoriented circle in the oriented sphere,
     see Fig.\ \ref{immersions}, \cite{Arnold,Valette}  and OEIS sequence A008987.
     \be\label{immersions1} \#\mathrm{UO\ spherical\ immersions} =1,2,6,21,99,588,3829,\cdots \ee
     In fact the number 3829 (for $n=7$) and  further terms will be obtained below through a different method.

     The following Table \ref{TableX} summarizes, for $n=1,\dots,6$, most of the results obtained using the above technique.
The number of immersions in surfaces of specific genus $g >0$ can be obtained in the same way (only the spherical ones appear in Table \ref{TableX}) but the corresponding values are gathered in Table \ref{TableXYZ1} because we shall later recover and extend the results obtained in the present section.
   The last line of Table \ref{TableX} refers to a quantity (free energy) defined in Appendix C.

\begin{table}[ht]
\caption{Orbits of $X$ subsets. $X = [2^{2n}]$. The numbers in blue give the asymptotic estimate of the number of orbits.
The numbers of spherical UO immersions are given by  the line $\#\ \CC_\sigma$-orbits in $X^{\prime\prime}$.  
 The total numbers of UO immersions (all genera) are given by the line $\#\ \CC_\sigma$-orbits in $X^\prime$.
 Last entry  $F_n^{(0,1)}$ of the Table is defined in Appendix C.}
\centering
\hglue-4mm
    \begin{tabular}{ | l || c|c|c|c|c|c|c|} 
\hline
     $n$ & 1& 2 & 3 & 4 & 5 & 6 \\
      \hline \hline
      $\card{\CC_\sigma}= 4^n n!$ &   4 & 32 &  384 & 6144 & 122\,880 & 2\,949\,120 \\
      \hline 
      $\card{X}=
      (4n-1)!!$ & 3& 105& 10\,395& 2\,027\,025& 654\,729\,075& 316\,234\,143\,225 \\
           \hline
    $\#$ $\CC_\sigma$-orbits in $X$ & 2& 10& 54& 491&{6430 }  &119\,475        \\ 
        \hline
      $\card{X'}=(4n-2)!!$ \hfill (I)&  2& 48& 3840& 645\,120& 185\,794,560& 81\,749\,606\,400
        \\
      \hline
  \rowcolor{Apricot}       $\#$ $\CC_\sigma$-orbits in $X^\prime$ &   1&3&13&121& 1538 &  28\,010  
       \\
      \hline
      \Blue{$\card{X'}/\card{\CC_\sigma}$}&$\Blue{\frac{1}{2}}$ & $\Blue{\frac{ 3}{2}}$ & \Blue{10} & \Blue{105} & \Blue{1512} & \Blue{27\,720}
    \\
            \hline
$\card{X''}$  \quad  \hfill (I)$\cap$(II)$_0$ &2& 32& 1344& 99\,840& 11\,034\,624 &1\,646\,100\,480 
\\
      \hline  
\rowcolor{Apricot} $\#$  $\CC_\sigma$-orbits   in $X^{\prime\prime}$& 1 & 2 & 6 & 21 & 99 & {588} 
 \\
 \hline
      \Blue{$\card{X''}/\card{\CC_\sigma}$}&$\Blue{\frac{1}{2}}$ & $\Blue{1}$ & \Blue{3.5} & \Blue{16.25} & \Blue{89.8} & \Blue{558.17}
 \\
\hline
 $F_n^{(0,1)}=
 \frac{\card{X''}}{4^n n!}$ & $\frac{1}{2}$& 1 &$\frac{7}{2}$ &$\frac{65}{4}$ & $\frac{449}{5}$& 
 $\frac{3349}{6}$ \\
 \hline  
          \end{tabular}
     \label{TableX}
      \end{table}

   \vglue10mm \begin{figure}[htbp]
 \centering
\includegraphics[width=30pc,  height = 10pc]{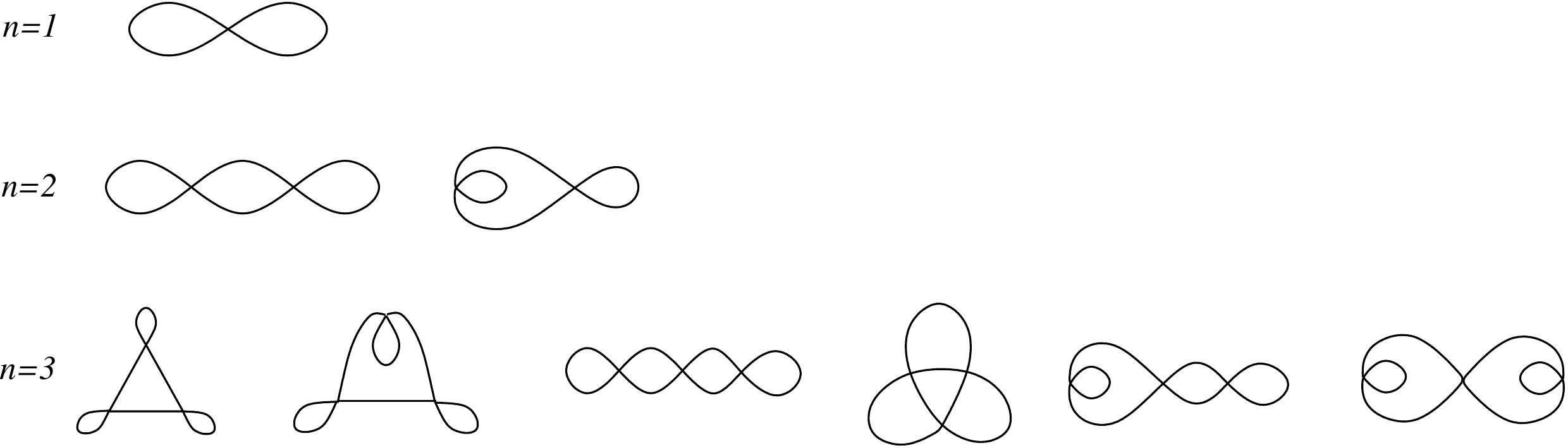}
\caption{\label{immersions} Immersions of an unoriented circle in the oriented sphere with $n$ double points, for $n=1,2,3$}
\end{figure}
     
\penalty -5000
\section{Bi{\colour}able and bi{\colour}ed maps of types UO and OO using~$S_{2n}$.} 
\label{Ypmethod}
\penalty 10000

In a nutshell: here we shall get UOc, the 
bicoloured immersions of type UO, then, forgetting the colour assignment, we shall get UOb, the 
bicolourable immersions of type UO, which turns out to be identical to 
the immersions UO for genus 0 maps (spherical curves). This will be explained below.  
     
\subsection{The set $Y = S_{2n}$, its orbits, and immersions of type UOc (``Y method")}
\label{Y}
{In the present section, we shall study the orbits
of solutions for a particular set of equations written in a set $Y$ {defined as} the symmetric group $S_{2n}$ itself, under the action of a particular subgroup that turns out to be its hyperoctahedral subgroup. }

\paragraph{Method: description of a bi{\colour}ed map
by a pair of permutations of $Y=S_{2n}$.}
It is a well known fact that {\it planar} maps with vertices of even valency may have 
their faces bi{\colour}ed. This applies of course to our 4-valent planar maps.
For non planar (\ie of genus $g>0$) maps, this is no longer 
guaranteed, (as already discussed in the Introduction and examplified in Fig.\ \ref{virtual2})
and we have to assume that the map is bicolourable, see below. 
     We then turn to a more efficient encoding of such {\it bicoloured} maps by permutations\,    \cite{ZJZ04}. 
      
     For a bi{\colour}ed map with $n$ vertices and $2n$ labelled edges, we deal 
     with permutations of $S_{2n}$, instead of $S_{4n}$ as above. A map 
      is encoded into a {\it pair} of permutations $\sigma, \tau \in S_{2n}$: $\sigma$ describes the 
     sequence of edges as white faces are traveled clockwise, while $\tau$ describes the counterclockwise sequence of
     edges on shaded faces, see Fig.\ \ref{sh-faces}a.   When considering the map as (the plane projection of)
     an {\it alternating} knot, one uses the convention for overcrossings/undercrossings shown on  Fig.\ \ref{sh-faces}b. 
     Define $\rho =\sigma^{-1}\tau$ and $\tilde\rho=\sigma \tau^{-1}$;
           it is clear that $\rho$ describes the pairings of edges at overcrossings,
     and $\tilde\rho$ at undercrossings, and they are
     both a product of $n$ disjoint transpositions, $\rho, \tilde\rho \in [2^{n}]$. 
     The chain of edges as one follows a thread of the knot/link is thus 
     described by $\rho \tilde\rho=\sigma^{-1}\tau\sigma\tau^{-1}$, and  white, resp. shaded, 
     faces correspond to cycles of $\sigma$, resp.~$\tau$. 
    \normalcolor
     
     \begin{figure}[htbp]
 \centering
\includegraphics[height = 8pc]{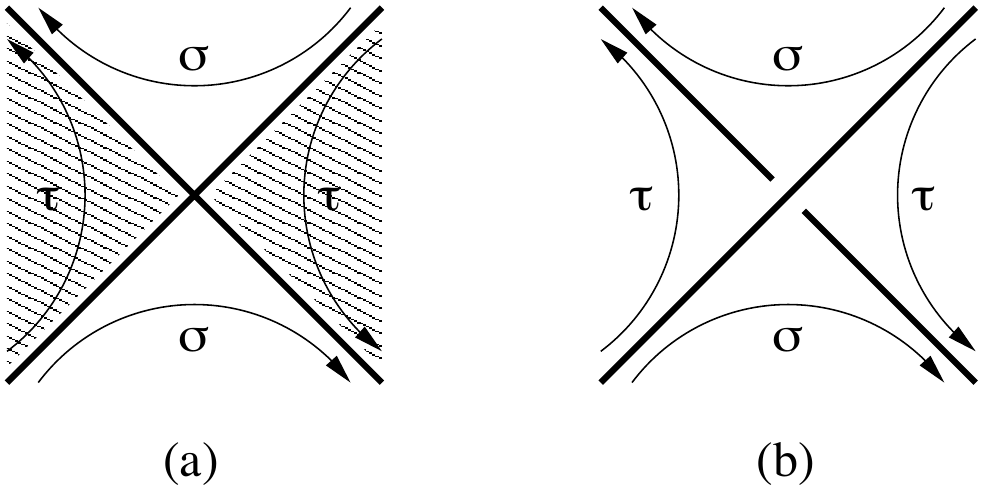}  
\caption{\label{sh-faces} White and shaded faces $\leftrightarrow$ over/under-crossings}
\end{figure}

Just as in Sect.\ \ref{X},   the two conditions of one-componentness and 
genus $g$ amount to imposing
\begin{equation*}  
\begin{split}
      \rho\,\tilde\rho=\sigma^{-1}\tau\sigma\tau^{-1} \ {\rm has \ } 2\ {\rm equal\ cycles},\ i.e.\  \rho\,\tilde\rho \in [n^2]\,.
        \quad\,&{\rm (I')}
       \quad\ {\rm one-componentness} \\
     c(\sigma)+c(\tau)=n+2-2g  \qquad\qquad &{\rm(II')}_g
        \quad\ \ {\rm genus } \ g 
\end{split}
\end{equation*}  

We want to count all $\sigma$ and $\tau$ subject to the above conditions.
Actually, it is convenient to fix $\rho$ in $[2^n]$, defining it for example by $\rho=\rho_0$,  $\rho_0(2i-1):=2i\,, \ \rho_0(2i)=2i-1\,,\  i=1,\cdots n$ (it is only a relabelling of the edges).
This choice being made, a map is then described (up to the conjugate action of the centralizer of $\rho$, see below) by a single permutation  $\sigma$, 
 since $\tau=\sigma \rho$. Notice that $\tilde\rho=\sigma\rho\sigma^{-1}$. 
With this choice for $\rho$,  the two conditions 
 (I${}^\prime$) and (II${}^\prime$)${}_g$ can be written in terms of equations for $\sigma$ (see theorem \ref{theo2}, below). 

As in Sect.\ \ref{Xpmethod}, it is natural to define the subsets $Y'$ and $Y'_g$ of $Y$, made of those
 permutations $\sigma$
that  respectively obey the conditions (I${}^\prime$) and (I${}^\prime$)$\cap$(II${}^\prime$)${}_g$.
The sets  $Y'_g$ constitute a partition of $Y^\prime$.
 
Ultimately, in order to count the number of  curves, one decomposes  the previous subsets $Y^\prime_g$, in particular $Y^{\prime \prime}=Y^\prime_0$  for spherical curves,  
into orbits for the conjugate action of $\CC_\rho$, the centralizer of $\rho_0$ in $S_{2n}$.

Finally, we observe that  the convention that $\sigma$ describes the clockwise
sequence of labels on white faces (and $\tau$ the counterclockwise one on shaded faces)
assumes that the sphere or the higher genus surface is oriented, while nothing specifies the
orientation of the curve. Our orbits,  in this section, are thus of type UO.

\paragraph{{Bi{\colour}ed versus bi{\colour}able curves.}}
One could think that the orbits of $Y^\prime_g$ should determine the various UO 
circle immersions of genus $g$. This is not so for two reasons, already mentioned in the Introduction.
First, the  curves obtained in this way correspond to    a bi{\colour}ing of a curve. For lack of a better name
we call  ``bi{\colour}ed immersions'' the bi{\colour}ed curves associated with the orbits of $Y^\prime_g$ (recall that in the language of 
knot theory, they describe alternating knots), and denote their set  by UOc. 
 Depending on whether the two alternative {\colour}ings (\ie the two choices of alternating over- and under-crossings)
 are or not topologically equivalent, they will contribute differently to the 
 counting of ordinary, un{\colour}ed immersions; this will be spelled out in Sect.\ \ref{bicoloredtobicolorable}. 
 Secondly, for genus $g>0$, not all curves are bi{\colour}able, see Fig.\ \ref{virtual2} for an example. 
 We shall call
``{\sl bi{\colour}able} curves'' or ``bi{\colour}able immersions''  (not to be confused with the {\sl bi{\colour}ed} 
ones previously described) the curves obtained by this technique, {\sl after} 
erasing the {\colour}s, and denote their set by UOb. Finally we recall that UO refers to immersions studied in the 
previous section, with no assumption of bi{\colour}ability. \\
In the Tables \ref{TableXYZ1} and \ref{TableXYZ2}, the reader can find  the cardinals of these various sets of
immersions, and check that $|$UO$|=|$UOb$|$ in genus 0, while  for $g>0$, $|$UO$|>|$UOb$|$,   
as expected since  bi{\colour}able curves do not exhaust all possible genus $g$ curves.
 
 \paragraph{Example of encoding} See  in Fig.\ \ref{newfigc} the example of the bi{\colour}ed diagram described by 
 $\sigma=[3,5,7,1,2,6,4,8] = (1,3,7,4)(2,5)(6)(8)$ and $\tau=[5,3,1,7,6,2,8,4] = (1,5,6,2,3)(4,7,8)$, hence $\rho=[2,1,4,3,6,5,8,7]$.

\medskip
{We summarize  the above method as follows: }

\begin{mytheo}
\label{theo2}   
 Call  $\rho = (1, 2)(3, 4)\ldots (2n-3, 2n-2)(2n-1, 2n) \in [2^{n}] \subset S_{2n}$, using cycle notation, 
 and 
 $\CC_\rho = C(S_{2n}, \rho) $, the centralizer of $\rho$ in  $S_{2n}$. 
Bi{\colour}ed circle immersions of the  unoriented circle in an oriented surface of genus $g$, or 
UOc immersions for short, are in bijection with the orbits of $\CC_\rho$ acting by conjugacy on 
$S_{2n}$  whose representatives $\sigma$ solve 
          \bea         \nonumber
       &\rho\,\sigma\,\rho\,\sigma^{-1} \ {\rm has \ } 2\ {\rm equal\ cycles},\ i.e.\  \rho\,\tilde\rho \in [n^2]\, \    {{\rm with \ } \tilde\rho = \sigma\,\rho\,\sigma^{-1}} 
       & \qquad\qquad\,{\rm (I')}
       \\
       \nonumber 
        &c(\sigma)+ c(\sigma\rho)=n+2-2g  & \qquad\qquad{\rm(II')}_{{g}}
   \eea
$c(x)$ being the function that gives the number of cycles (including singletons) of the permutation $x$. 
 \end{mytheo}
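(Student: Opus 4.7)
My plan mirrors the proof of Theorem \ref{theo1}, but adapted to the more economical $S_{2n}$ encoding of bi\colour ed maps. First I would justify the basic dictionary between a bi\colour ed $4$-valent labelled map and an ordered pair $(\sigma,\tau)\in S_{2n}\times S_{2n}$. After labelling the $2n$ edges, the convention that $\sigma$ reads the edges clockwise around each white face, while $\tau$ reads them counter-clockwise around each shaded face, captures all the local data: at every $4$-valent vertex the two white and two shaded corners alternate, so together $\sigma$ and $\tau$ determine, and are determined by, the labelled bi\colour ed map. White and shaded faces are in bijection with the cycles of $\sigma$ and of $\tau$ respectively, whence $F=c(\sigma)+c(\tau)$.

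Next I would identify $\rho=\sigma^{-1}\tau$ as the pairing of edges at overcrossings and $\tilde\rho=\sigma\tau^{-1}$ as the pairing at undercrossings; a direct local check at each vertex shows that both are products of $n$ disjoint transpositions, so $\rho,\tilde\rho\in[2^n]$. A global relabelling of the $2n$ edges acts on $(\sigma,\tau)$ by simultaneous conjugation, so I may assume $\rho=\rho_0=(1,2)(3,4)\cdots(2n-1,2n)$ without loss of generality. Once this normalization is imposed, the residual relabelling freedom is exactly the subgroup $\gamma\in S_{2n}$ with $\gamma\rho_0\gamma^{-1}=\rho_0$, i.e.\ the centralizer $\CC_\rho$ (the hyperoctahedral group). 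Moreover, fixing $\rho$ forces $\tau=\sigma\rho$ and hence $\tilde\rho=\sigma\rho\sigma^{-1}$, so the map is encoded by the single permutation $\sigma$ up to $\CC_\rho$-conjugacy.

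Now I would verify the two combinatorial conditions. Following a thread of the curve alternates overcrossings and undercrossings, so the cyclic order of edges along the immersed curve is given by $\rho\tilde\rho=\rho\,\sigma\,\rho\,\sigma^{-1}$. Since $\rho$ swaps consecutive edges along the thread and $\tilde\rho$ does the same at undercrossings, the resulting permutation has even order on every orbit and always decomposes into an even number of cycles of the same length; a single connected component is therefore equivalent to $\rho\tilde\rho$ having exactly two cycles of length $n$, i.e.\ $\rho\tilde\rho\in[n^2]$, which is condition $(\mathrm I')$. For the genus, the map has $V=n$ vertices and $E=2n$ edges, and we have already identified $F=c(\sigma)+c(\tau)=c(\sigma)+c(\sigma\rho)$; Euler's relation $V-E+F=2-2g$ then yields exactly $(\mathrm{II}')_g$.

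Finally, since conditions $(\mathrm I')$ and $(\mathrm{II}')_g$ depend on $\sigma$ only through $\CC_\rho$-invariant data (both $\rho\sigma\rho\sigma^{-1}$ and the cycle counts $c(\sigma)$, $c(\sigma\rho)$ are invariant under $\sigma\mapsto\gamma\sigma\gamma^{-1}$ with $\gamma\in\CC_\rho$), the $\CC_\rho$-orbits of solutions $\sigma$ are in bijection with the unlabelled bi\colour ed genus $g$ UO immersions, which is the claim. The choice that $\sigma$ reads white faces \emph{clockwise} fixes an orientation of the ambient surface, while no direction is attached to the thread, confirming the type UO. The step I expect to require the most care is the one-componentness translation: one must argue rigorously that the thread of the immersion is traced by $\rho\tilde\rho$ on the edge labels, and that ``single component alternating over/under'' is exactly the cycle pattern $[n^2]$, rather than a more permissive condition.
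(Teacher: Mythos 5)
Your proposal is correct and follows essentially the same route as the paper, which presents this argument as the discussion preceding Theorem~\ref{theo2} (the $(\sigma,\tau)$ dictionary for bi{\colour}ed maps, the identification of $\rho$ and $\tilde\rho$ as crossing pairings with $\rho\tilde\rho$ tracing the thread, the normalization $\rho=\rho_0$ leaving the residual relabelling group $\CC_\rho$, and Euler's relation for the genus) rather than as a separate formal proof. Your closing caution about the one-componentness step is well placed, but the reasoning you give — each component contributes a pair of equal-length cycles to $\rho\tilde\rho$, so a single component is exactly the pattern $[n^2]$ — is the intended justification.
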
 

 {Remarks. \\
 (i) In the wording of this theorem we chose a particular value of $\rho$ in the conjugacy class $[2^{n}]$, {namely $\rho = \rho_0$}, because it is simple and convenient,  but we could have made any other choice in the same class since this just corresponds to a relabelling of some edge labels.
We shall see in Sect.\ \ref{gaugefixing} how to further restrict the choice of~$\sigma$.\\
(ii) It is useful to remember that  $  
\rho^2 = \widetilde \rho^2 = 1$,  that $\widetilde \rho =  \rho^\sigma$ since, by definition,  $\rho^\sigma = \sigma \rho \sigma^{-1}$, and that
 $\tau = \sigma \rho = \widetilde \rho \sigma$. \\
(iii) The set $Y^\prime$ defined by condition ${\rm(I')}$
alone can also be written $Y^\prime = \{ \sigma \in S_{2n} \, : \, \sigma^\rho  \sigma^{-1} \in [n^2] \}$.}
\paragraph{Example.}  
As an example we give in Fig.\ \ref{newfigc} the diagram of Fig.\ \ref{newfiga} in this new description.
 \begin{figure}[htbp]
 \centering
\includegraphics[height = 6pc]{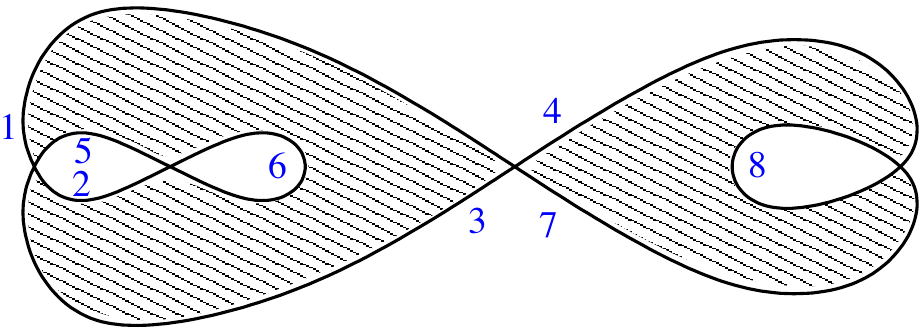}
\caption{\label{newfigc} The bi{\colour}ed diagram encoded by\\ $\sigma=[3,5,7,1,2,6,4,8]  = (1,3,7,4)(2,5)(6)(8) $}
\end{figure}    
\paragraph{ Structure of the centralizer.}
\label{pairS2nCrho}
The centralizer $\CC_\rho$ of $\rho_0$ is generated by transpositions of $(1,2)$, $(3,4)$,\dots $(2n-1,2n)$, times
a permutation of these $n$ pairs. Its order is thus $2^n n!$.  
This group is called the hyperoctahedral group $BC_n$, as it  is the group of symmetries of the $n$-cube. It admits several
different geometric and algebraic presentations.
One construction is as follows (see for example \cite{CecheriniSliberstein}).
The symmetric group $S_{2n}$ acts on  $\{1, 2, . . . , 2n\}$ and therefore also on the set of partitions of the latter consisting of two-element subsets.
Fix an element in this set (we shall choose $\{ \{1,2\}, \{3,4\}, \{5,6\}, \ldots, \{2n-1,2n\}\}$) and denote by $\CC_\rho$ its stabilizer. 
Clearly, $\CC_\rho$ permutes the $n$ two-element subsets among themselves and it is equal to the centralizer, in $S_{2n}$ of the permutation $\rho=(1,2)(3,4)\ldots(2n-1,2n)$, whence the notation.
The subgroup $\CC_\rho$ of $S_{2n}$ then appears as the semidirect product of $S_2 \times \ldots \times S_2$ ($n$ times) and $S_n$, the latter acting by permuting the factors of the former (wreath product). 

In order to put in perspective what will be done in Sect.\ \ref{gaugefixing}, let us make a few additional remarks.
Call $\varpi$ the map $S_{2n} \mapsto S_{2n}$ defined by $\varpi(x) = \rho x \rho^{-1}$. 
Clearly $\varpi$ is a group homomorphism and an involution, moreover the subgroup $\CC_\rho$ is the set of fixed points of this involution:   $\CC_\rho = \{x \, : x \in S_{2n} \, | \, \varpi(x) = x\}$.  
Define the map (not a group morphism) $\varphi : S_{2n} \mapsto S_{2n}$ by $\varphi(x) = \varpi(x^{-1}) x$.  
Notice that $\varphi(x^{-1})$ and $\varphi(x)^{-1}$ belong to the same $S_{2n}$ - conjugacy class since they are conjugated in $S_{2n}$ :  $\varphi(x)^{-1} = x^{-1} \varphi(x^{-1}) x$.
We have also $\varphi(x)^{-1} = \rho x \rho x^{-1}$, so that $\rho$ being fixed,  the condition characterizing the one-componentness of the permutation $x$ encoding a curve reads simply $\varphi(x) \in [n^2]$.
The reader will easily notice (see also \cite{AkerCan}) that, for any $k$ in $\CC_\rho$, $\varphi (k x) = \varphi(x)$ and $\varphi (x k) = k \varphi(x) k^{-1}$.
Therefore $\varphi$  induces a map from the space of double cosets\footnote{A graphical way to encode these double cosets is described in  \cite{MacDo}, p. 401,  see also  \cite{AkerCan}.} $\CC_\rho \backslash S_{2n}/\CC_\rho$ to the set of conjugacy classes in $S_{2n}$.  Actually, we shall see in Sect.\ \ref{gaugefixing} that the counter image $Y^\prime = \varphi^{-1} ([n^2])$, considered as a subset of $\CC_\rho \backslash S_{2n}/\CC_\rho$,  contains only one element:  the double coset $\CC_\rho \backslash \beta /\CC_\rho$, where $\beta = (1,2,3,\ldots 2n)$. As a double coset, $Y^\prime$ is then a disjoint union of left cosets $\sigma \CC_\rho$, with $\sigma \in S_{2n}$ (they will be identified with the sets $V(r)$ of Sect.\ \ref{gaugefixing}), parametrized by the homogenous space $R=\CC_\rho/ \CC_\rho \cap \CC_\rho^\beta = \CC_\rho/D_n$, where $D_n$ is the dihedral group. The space $R$, with $\vert  \CC_\rho \vert / \vert D \vert$ elements, will be described in Sect.\ \ref{gaugefixing} as parametrizing the ``gauge condition''  (in physicist's parlance).

\paragraph{Orbits of $Y^{\prime}$.} 
  One finds $\card{ Y'}=2^{2n-1}(n-1)! n!$, see  Appendix C.4 for a proof   based on a simple integral. 
 In practice, the orbits of $Y^{\prime}$  are  obtained by  methods similar to those of Sect.\ \ref{Xpmethod}, 
 (see also Appendix A).
\paragraph{Orbits of $Y^{\prime \prime} = Y^\prime_0$ and of $Y^\prime_g$.} 
Once the orbits of $Y^{\prime}$ are known, filtering 
according to their genus yields the orbits of each $Y^\prime_g$, in particular of $Y''=Y^\prime_0$.
For $n=10$, we had to rely on a random sampling method (see Appendix A), but as we have no {\it a priori} knowledge of $\card{ Y''}$, we have no way to check the correctness of the result.
 The figures entered in red in Table \ref{TableY} below, for $n=10$,  are thus likely estimates, awaiting an independent confirmation.

\paragraph{Orbit lengths.} Lengths of orbits of  $Y^\prime$
may be read off the following table, 
with the notation $k^{\# {\rm orbits \ of\ length\ } \card{\CC_\rho}/k}$
 $$\begin{matrix} n=1 & 2^2  \\ 
 n=2 & 1^1 & 2^2\\
 n=3 &1^4 & 2^6 & 3^2 & 6^2 \\ 
 n=4 & 1^{44} & 2^6 &4^4\\  
 n=5& 1^{352}& 2^{62}& 5^{4}& 10^2 \\
n=6& 1^{3803}& 2^{62}& 3^{15}& 6^6 \\
n=7&1^{45696}& 2^{766}& 7^6& 14^2 \\
 n=8&1^{644736}& 2^{752}& 4^{28}& 8^8 \\
n=9& 1^{10315716}& 2^{12264}& 3^{202}& 6^{22}& 9^8& 18^2 \end{matrix}$$

\paragraph{Results.} 
The numbers of orbits for $g=0$ are given in Table \ref{TableY}; 
for higher values of $g$ they are gathered in Table \ref{TableXYZ1}, under the entry UOc. 

\afterpage{%
    \clearpage
\begin{landscape}
\def \t{} 
\def\t{\tiny}
  \begin{table}[ht]
 \captionof{table}{Orbits for $Y$ subsets.  \\
The numbers in blue give the asymptotic estimate of the number of orbits. \\
Numbers of spherical UO bi{\colour}ed immersions appear on the line $\#\, \CC_\rho$-orbits in $Y^{\prime \prime}$. \\
Total numbers of UO bi{\colour}ed immersions (all genera): line $\# \, \CC_\rho$-orbits in $Y^\prime$. \\
Last entry  $F_n^{(0,1)}$ of the Table is defined in Appendix C.\\ {Here and below in this paper,  figures in red
 are still awaiting  confirmation, see above and Appendix A for explanations.} }
\centering
 {\tiny
  \begin{tabular}{ | l || c|c|c|c|c|c|c|c|c|c|}
       \hline
         $n$ & 1& 2 & 3 & 4 & 5 & 6&7&8&9&10\\
      \hline
      $\card{\CC_\rho}= 2^n n!$ &2 & 8 &  48 & 384 & 3840 & 46\,080 &  {645\,120} 
      & 10\,321\,920 & 185\,794\,560& 3\,715\,891\,200  \\
      \hline
      $\card{Y}=\card{S_{2n}}=(2n)!$ & 2 & 24 & 720 & 40\,320& 3\,628\,800&  {\t 479\,001\,600} &  {\t 87\,178\,291\,200}
    &  {\t 20\,922\,789\,888\,000} & {\t 6\,402\,373\,705\,728\,000}&2\,432\,902\,008\,176\,640\,000
      \\ 
      \hline
        $\# \CC_\rho$-orbits in $Y=S_{2n}$  &2&8&34&182&1300&{12\,634}& {153\,598 } & 2\,231\,004 &
                    37\,250\,236 &  699\,699\,968
        \\ \hline
 $\card{Y'}=2^{2n-1}(n-1)! n!$ & 2 & 16 & 384 & 18\,432 & 1\,474\,560 &  {\t 176\,947\,200} &  {\t 29\,727\,129\,600}&
  {\t 6\,658\,877\,030\,400 } &{\t 1\,917\,756\,584\,755\,200} &{690\,392\,370\,511\,872\,000}
 \\ \hline
 \rowcolor{Apricot}   $\#$ $\CC_\rho$-orbits in $Y^{\prime}$  \hfill (I')\quad  & 2 & 3 & 14 & 54 & 420 &  3886 & {46\,470} & 645\,524&{10\,328\,214}& \\ 
  \hline
  \Blue{$\card{Y'}/ \card{\CC_\rho}=2^{n-1}(n-1)!}$ &\Blue1& \Blue 2& \Blue 8& \Blue{48}& \Blue{ 384}& \Blue{ 3840}& \Blue{ 46\,080} 
  &\Blue{645\,120}&\Blue{10\,321\,920}  & \Blue{185\,794\,560}
        \\
\hline
 $\card{Y''}$  & 2 & 16& 336 & 12\,480  & 689\,664 &
 {\t 51\,440\,640 } &  {\t 4\,870\,932\,480} &561\,752\,432\,640 &76\,597\,275\,525\,120 & \red{12\,077\,498\,082\,263\,040}
  \\ \hline
  \rowcolor{Apricot}   $\#$ $\CC_\rho$-orbits in $Y^{\prime\prime}$ 
\hfill (I')$\cap$(II')  
&2&3&12&37&198 & {1143 } &{7658} & 54\,559 & {413\,086} & \red{3\,251\,240}
 \\
\hline
 \Blue{$\card{Y''}/\card{\CC_\rho}$} & \Blue{1.} & \Blue{2.} & \Blue{7.}& \Blue{32.5} & \Blue{179.6} & \Blue{1116.33} & \Blue{7550.43} &
 \Blue{54423.3}&\Blue{412\,268.66}&
 \Red{3\,250\,229.2}\\ \hline
 $F_n^{(0,1 )}= 
  \frac{\card{Y''}}{2 (2n)!!} $ & $\frac{1}{2}$& 1 &$\frac{7}{2}$ &$\frac{65}{4}$ & $\frac{449}{5}$& $\frac{3349}{6}$&
$\frac{ 52\,853}{14}$  & $\frac{217693}{8}$ & $\frac{618403}{3}$ & \red{$\frac{8125573}{5}$} \\ 
\hline 
     \end{tabular}    
         \label{TableY}}
      \end{table}
 \begin{table}[ht]
\captionof{table}{Orbits for $Z$ subsets. \\
Numbers of spherical OO immersions: line $\#\, \CC^\prime_\rho$-orbits in $Z^{\prime \prime}$. \\
Total numbers of general OO  immersions (all genera): line $\#\,\CC^\prime_\rho$-orbits in $Z^\prime$.}
\centering
    \def\t{\tiny}
    {\tiny
 \begin{tabular}{ | l || c|c|c|c|c|c|c|c|c|c|} 
       \hline
         $n$ & 1& 2 & 3 & 4 & 5 & 6&7&8&9&10\\
      \hline
      $\card{ \CC'_\rho}= n!$ &1 & 2 &  6 & 24 & 120 & 720 &   5040 & 40\,320 & 362\,880 & 3\,628\,800
        \\        \hline
 $\card{ Z'}=(2n-1)! $ & {\t 1} & {\t 6} & {\t 120} & {\t 5\,040}  & {\t 362\,880} &  {\t  39\,916\,800} &  {\t 6\,227\,020\,800}&
  {\t 1\,307\,674\,368\,000} &{\t 355\,687\,428\,096\,000} & {\t 121\,645\,100\,408\,832\,000}
     \\ \hline
   \rowcolor{Apricot}  $\# \CC'_\rho$-orbits in $Z^{\prime}$ \quad  & 1 & 4 & 22 & 218 & 3028 & 55540  & {1\,235\,526} &{32\,434\,108} &{980\,179\,566
}& 33\,522\,177\,088\\ 
  \hline
  \Blue{$\card{ Z'}/ \card{ \CC'_\rho}=\frac{(2n-1)!}{n!}}$ &\Blue{\t 1}&\Blue{\t 3} & {\Blue{\t 20}}& \Blue{\t 210}&\Blue{\t 3024}& \Blue{\t 55\,440}& \Blue{\t 1\,235\,520} 
  &\Blue{\t 32\,432\,400}&\Blue{\t 980\,179\,200}  & \Blue{\t 33\,522\,128\,640}
   \\ \hline
    $\card{ Z''}$ &1&4&42&780&21\,552& 803\,760&    {\t 38\,054\,160}    &{\t 2\,194\,345\,440}&{\t 149\,604\,053\,760}
    & \red{11\,794\,431\,720\,960}\\ \hline 
  \rowcolor{Apricot}  $\#$ $\CC'_\rho$-
orbits in $Z^{\prime\prime}$ 
&1&3&9&37&{182}  & {1143 } &{ 7553} & {54\,559} & 412\,306 & \red{3\,251\,240} 
    \\ \hline
   \Blue{$\card{ Z''}/ \card{ \CC'_\rho} =2 F_n^{(0,1)}$} &\Blue{\t 1}&\Blue{\t 2}&\Blue{\t 7}&\Blue{\t $\frac{65}{2}$}&\Blue{\t $\frac{898}{5}$}&\Blue{\t $\frac{3349}{3}$}&\Blue{\t $\frac{52853}{7}$}&\Blue{\t $\frac{217693}{4}$}&\Blue{\t $\frac{1236806}{3}$} &\Red{\t $\frac{16251146}{5}$ }\\ \hline 
     \end{tabular}}
     \label{TableZ} 
     \end{table}
    \end{landscape}
    \clearpage
}

\subsection{The left coset $U = \beta \, \CC_\rho$ and immersions of type UOc and OOc (``U method")}
\label{gaugefixing}
In a nutshell : we shall see in this section that, in order to determine the number of immersions of type UOc, we can replace the set $Y^\prime$ studied in the previous section by a particular subset $U$ (a particular left coset of $\CC_\rho$) 
and the adjoint action of $\CC_\rho$ by its restriction to the dihedral subgroup $D_n$, which is much smaller. Moreover, 
by replacing the adjoint action of $D_n$ by the adjoint action of $\mathbb{Z}_n$ (a particular cyclic subgroup of the latter), one obtains the number of immersions of type OOc. 
In Sect.\ \ref{fromorbtoimm} we shall see how, from this study, and by introducing several involutions, one can obtain the various types of immersions. 
As a side result we shall also see how the stratification of $U$ into subsets of genus $g$ allows us to recover (in genus $0$) the classification of the so-called ``long curves'' and to obtain new classifications when $g>0$.

\paragraph{a. The set $R$.}
With  $\rho=(1,2)(3,4)\cdots (2n-1,2n)$ fixed as before, what can be said about the values
of $\tilde \rho= \sigma \rho\sigma^{-1}$ as $\sigma\in Y'$ ? Consider the sets
\be R:=\{\tilde\rho | \tilde\rho \rho\in [n^2]\} \ee
and for $r\in R$, 
\be V(r):=\{\sigma | \sigma \rho\sigma^{-1} =r\} \,. \ee
It is readily seen that $V(r)$ is a left coset of $\CC_\rho$, since
$\sigma,\sigma'\in V(r) \Leftrightarrow \sigma \rho\sigma^{-1}=\sigma' \rho\sigma'^{-1}
 \Leftrightarrow \sigma'^{-1}\sigma \rho=\rho \sigma'^{-1}\sigma$,
 hence $\sigma'^{-1}\sigma\in \CC_\rho$ and $\sigma \in \sigma'\CC_\rho$. This property 
 of being a left coset will be used shortly. 
This implies that $\card{V(r)}=\card{\CC_\rho}$ and from the fact that $Y'$ may be partitioned 
 into $V(r)$, $Y'=\sqcup_{r\in R} V(r)$, it follows, using the values of $\card{Y'}$ and $\card{\CC_\rho}$
calculated above, 
 that $\card{R}= \card{Y'}/\card{\CC_\rho}= 2^{n-1}(n-1)!$.

\paragraph{b. Further gauge fixing.}
One may now restrict further the set of admissible $\sigma$ by imposing 
the additional condition (on top of $\rho$ fixed as above) 
$$ \tilde\rho \rho =\sigma \rho\sigma^{-1}\rho=\alpha \ {\rm fixed\ in\ } R\rho\,, $$
or equivalently $\sigma \in V(\alpha \rho)$.  {\it For example} one may demand that 
$ \sigma \rho\sigma^{-1}\rho$ be the product of the two cycles
\be\label{newgauge} \sigma \rho\sigma^{-1}\rho=\alpha_0:=(1,3,5,\cdots,2n-1)(2, 2n,2n-2,\cdots,4)\,.\ee
This latter choice $\alpha_0$ corresponds to a sequential labelling of edges 
by $(1,2,3,\cdots 2n)$ as the curve is travelled  one way or the other.
We call $U$ the set of $\sigma$ such that 
\be\label{newconstr} U=\{ \sigma| \sigma \rho \sigma^{-1}\rho=\alpha_0\}=V(\alpha_0 \rho)\,, \ee
and we recall that $$ |U|=|C_\rho|=2^n n!\,.$$
\begin{myprop}The general solution of (\ref{newconstr}) is 
\label{generalsol} 
$\sigma= \beta \xi\,,$
  with $\beta$ the cyclic permutation 
  \label{beta}
$\beta=(1,2,3,\cdots 2n)$
   and $\xi$ arbitrary in $\CC_\rho$.  
In other words, $U=\beta \CC_\rho$, a particular $\CC_\rho$-left coset, in agreement with the previous argument. 
\end{myprop}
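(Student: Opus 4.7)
\medskip
\noindent\textbf{Proof plan for Proposition \ref{generalsol}.}

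The plan is to exhibit $\beta=(1,2,\ldots,2n)$ as one particular solution of (\ref{newconstr}) and then invoke the coset structure already established in part (a) above. First I would rewrite the defining equation $\sigma\rho\sigma^{-1}\rho=\alpha_0$ as $\sigma\rho\sigma^{-1}=\alpha_0\rho$ (using $\rho^2=1$ to multiply on the right by $\rho$), so that $U$ is exactly the set $V(\alpha_0\rho)$ of Sect.\ \ref{gaugefixing}(a). It was shown there that every nonempty set of this form is a left coset of $\CC_\rho$, so as soon as one single solution is produced, the whole solution set is determined.

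Next I would carry out the verification that $\beta$ is such a solution. Since conjugation by $\beta$ acts on transpositions by $\beta(i,j)\beta^{-1}=(\beta(i),\beta(j))$, one finds immediately
\[
\beta\rho\beta^{-1}=(2,3)(4,5)\cdots(2n-2,2n-1)(2n,1).
\]
Then I would trace the image of an odd label $2k-1$ under $\beta\rho\beta^{-1}\rho$: $\rho$ sends it to $2k$, which the second factor sends to $2k+1$ for $k<n$ and to $1$ for $k=n$. This produces the cycle $(1,3,5,\ldots,2n-1)$. An analogous trace on even labels $2k$ gives $2k\mapsto 2k-1\mapsto 2k-2$ for $k>1$, and $2\mapsto 1\mapsto 2n$, producing the cycle $(2,2n,2n-2,\ldots,4)$. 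The product is exactly $\alpha_0$, so $\beta\in U$.

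Combining these two observations, $U=\beta\,\CC_\rho$, which gives both the general form $\sigma=\beta\xi$ with $\xi\in\CC_\rho$ arbitrary and the cardinality $|U|=|\CC_\rho|=2^n n!$ as noted just before the proposition. The only non-routine step is the index bookkeeping in the cycle computation, but I would expect no real obstacle: everything boils down to reading off the action of two involutions on $\{1,\ldots,2n\}$. One could alternatively note that $\beta\rho=(1,3,\ldots,2n-1)(2,4,\ldots,2n)\cdot\rho$-style identities give $\alpha_0$ more conceptually as the commutator of $\beta$ and $\rho$ up to the fact that $\rho$ is an involution; this would provide a sanity check but is not needed for the argument.
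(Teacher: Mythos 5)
Your proposal is correct and takes essentially the same route as the paper: both hinge on checking that $\beta\rho\beta^{-1}\rho=\alpha_0$ (a step the paper dismisses as ``easy to check'' and you carry out explicitly, and correctly). The only minor difference is in the final step: you invoke the left-coset property of the sets $V(r)$ established in part (a), whereas the paper substitutes $\sigma=\beta\xi$ into (\ref{newconstr}) and reduces the equation directly to $\xi\rho\xi^{-1}=\rho$; both arguments are equally valid and the proposition itself already notes the agreement with the coset argument.
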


\begin{proof} It is easy to check that $\beta\rho \beta^{-1}\rho=\alpha_0$, hence upon the change of
variable $ \sigma= \beta \xi$,  equ.\ (\ref{newconstr})
reads  $\beta\xi \rho \xi^{-1}\beta^{-1}\rho =\beta\rho \beta^{-1}\rho$, hence $ \xi \rho \xi^{-1} =\rho $, $\xi\in \CC_\rho$.
\end{proof} 
The above parametrization of $\sigma$  as an element of the left coset $U$ therefore automatically 
implies condition (I$^\prime$). This is very useful in practice (see a comment at the end of Appendix A).

\paragraph{c. The remaining reparametrization groups.}
In this new ``gauge", the remaining labelling freedom 
 on a given $\sigma$ is the choice of the 
origin (edge number 1), and the direction of travel if one considers unoriented curves. 
Accordingly the group of reparametrization, $\CC_\rho\cap \CC_\alpha$, 
where $ \CC_\alpha$ is the centralizer of $\alpha$ in $S_{2n}$,
is the dihedral group $D_n$ (of order $2n$) 
if one considers unoriented curves, and the cyclic group $\mathbb{Z}_n$ if the curves are
oriented. Unlabelled curves are thus in one-to-one correspondance with orbits 
of the set $U$ under the adjoint action of $D_n$ (unoriented curves) or of $\mathbb{Z}_n$ (oriented ones).\\
Remark. This occurrence of the dihedral or cyclic group makes clear that the length of orbits which must be 
divisors of the orders of these groups, are divisors of $2n$ or $n$, a ``well known" fact.\\
Warning:  $Y^\prime$ is stable under the adjoint action of $\CC_\rho$ and can be decomposed into the corresponding orbits, but its subset $U$ is not stable under this action, although it intersects all the orbits of $Y^\prime$ (not only once,  in general);
$U$ is however stable under the action of $D_n$. See Appendix A for more details.

\paragraph{d. Back to $\vert Y^\prime \vert $ and $\vert R \vert $.}
 The number of left cosets contained in a double coset $K \backslash g / K$, for $g$ an element of a group $G$,  and $K$, a subgroup of $G$, is equal to the index, in $G$, of the subgroup $K \cap K^g$, where $K^g = g K g^{-1}$. In the present situation, with $G=S_{2n}$,  $K = \CC_\rho$, and $g = \beta$ (the above cyclic permutation),  we have $\CC_\rho \cap \CC_\rho^\beta=D$ where $D$ is the dihedral subgroup
 of $\CC_\rho$ (see also \cite{MacDo}, p. 402). The previous index is therefore $\vert \CC_\rho \vert/2n$. Since all $\CC_\rho$ left cosets  have the same number of elements, the number of permutations contained in the double coset  $Y^\prime = \CC_\rho  \backslash \beta / \CC_\rho$  is  equal to  $ \vert \CC_\rho \vert \times \vert \CC_\rho\vert /2n $: we recover the number of elements of $Y^\prime$.  As a double coset, $Y^\prime$ is a disjoint union of left cosets $V(r)$ parametrized by the homogenous space $R=\CC_\rho/ \CC_\rho \cap \CC_\rho^\beta = \CC_\rho/D$.  

 \paragraph{e. The set $U_g$ of long curves.}
 The set $U$ just defined may be partitioned into $U_g$ according to genus, as was done
 before for $Y'$, and each $U_g$ may be interpreted as the set of {\it rooted 
 maps} on an oriented surface $\Sigma$ of genus $g$,  or in other words, of (equivalent classes of)
 {\it open} (and oriented) curves drawn in $\Sigma$, sometimes dubbed
 {\it long curves}. In genus $g=0$ their number have been
 computed in \cite{GZD} up to $n=10$, and in \cite{JZJ} up to $n=19$ crossings using transfer matrix techniques. (Their
asymptotic behavior has also been studied using a method of random sampling \cite{SZJ}.)\\
\begin{proof}
 Consider  a {\it rooted} 4-valent genus $g$ map with $n$ crossings and one component: 
 the marked half-edge that we label by 0 may be regarded as cut open, which transforms the map into  a ``long curve".
 We then label by $1,2,\cdots , 2n$ the successive edges encountered along the curve. The curve may then be 
 bi{\colour}ed  by assigning to the left of the marked edge the {\colour} say white, and then alternating {\colour}s 
 as we go from a face to an adjacent one.  To completely describe the pattern of crossings of the curve, 
 it remains to give the permutation 
 $\sigma$ satisfying the rules of the previous formalism, namely conditions  (\ref{newgauge}) and genus $g$. 
 There is a bijection between these rooted maps and
 elements of the set $U_g$. There is no reparametrization freedom left, hence no orbit to take, once 
 the root has been fixed. Then any such open curve may be closed by identifying edges of labels 0 and $2n$. 
 Topologically distinct closed curves, \ie images of immersions, correspond to orbits of the set  $U_g$
 by the reparametrization group, namely the cyclic group $\mathbb{Z}_n$ or the dihedral group $D_{2n}$ depending on whether 
 the curve is oriented or not (OOc resp. UOc).
 \end{proof}

\noindent Thus one finds a decomposition of the $2^n n!$ curves of $U$, ($n=0,1,\cdots,9$), according to genus as 
\begin{equation} {
\footnotesize
 \# {\rm \ open\ curves\ } 
= \pamatrix{1\\2\\ 8\\ 48\\ 384 \\ 3840\\ 46\,080 \\ 645\,120\\ 10\,321\,920\\ 185\,794\,560}=
\pamatrix{ 1\\ 2& \\  8& \\ 42&6 \\ 260&116 & 8 \\ 1796 &1700 & 344\\ 13\,396 & 22\,528 & 9700 & 456\\ 
105\,706&    284\,284 & 220\,570 & 34\,560 \\  870\,772 & 3\,488\,904 & 4\,392\,820 & 1\,506\,576 & 62\,848 \\
7\,420\,836& 42\,074\,568&79\,951\,716 &49\,572\,528 & 6\,774\,912}  }
\end{equation}
with the first column (genus $g=0$) in agreement with \cite{GZD, JZJ}.
Notice that the sum over all genera is of course equal to $\vert \CC_\rho\vert=2^n n!$.

\paragraph{f. Orbits of $U$ and UOc and OOc immersions.}
 \label{OOcimm}
The same sort of counting of  orbits 
 that was done  in the sets $Y^\prime$ and $Y^\prime_g$ 
may be carried out in the sets $U$ and  $U_g$. 
{}From the  previous discussion 
it follows that UOc immersions are orbits of $U$ under the action of $D_n$
while its $\mathbb{Z}_n$-orbits are
 what may be called {\it OOc immersions}. The numbers of UOc immersions have been
 computed before,  see Table \ref{TableY}, using the $\CC_\rho$ action on $Y^\prime$, but 
 can be recovered in a more economic way, using the $D_n$ action on the set $U$.
 Here are the numbers of OOc immersions and their distribution according to genus
 for $n=1,\cdots, 9$.
 \be {\footnotesize \#{\rm  \ curves\ of\ type\ OOc\ }=
 \pamatrix{2\\ 6\\ 20 \\ 108\\ 776 \\ 7772\\ 92\,172\\1\,291\,048\\ 20\,644\,140 }=\pamatrix{2\\ 6\\ 18& 2\\ 74 & 32 & 2\\
 364 & 340 & 72 \\ 2286 & 3780 & 1630 & 76\\ 15\,106& 40\,612 & 31\,510 & 4944\\
 109\,118& 436\,368 & 549\,334 & 188\,356 &7872 \\ 824\,612 &4\,675\,012 & 8\,883\,620 &5\,508\,120 &752\,776}} \ee
 See also Table \ref{TableXYZ1}. 
 For even $n$, the numbers of such orbits are just the double of those of type UOc, while 
 for $g=0$ these numbers are the double of OO immersions, see the proof below in Sect.\ \ref{discrOO},
 Theorem \ref{Theo3c}.


\section{From orbits to various types of immersions}
\label{fromorbtoimm}

 \subsection{Preamble}\label{preamble}
In this section we examine the effect of three involutive transformations on orbits of bi{\colour}ed immersions: the 
 {\colour} swapping or {\it swap} in short, denoted by $s$; the {\it mirror} transformation, $m$; and the {\it orientation reversal}
 $r$. These three involutions commute. Their explicit form depends  on the class of orbits on which they act, as 
 we shall see below. Given an orbit $o$ belonging to a set $O$ and an involution $I$, if $o_I$ denotes the transform of
 $o$ under $I$, there are two cases: either $o=o_I$, or $o\ne o_I$,  a truism !, 
 and we define
 \be\label{rsI} r_I= \#\{ o\in O| o=o_I\}\,,\quad s_I=\#\{\{o,o_I\} | o\ne o_I\}\, \ee
 (\ie, $s_I=\#$ {\it unordered} pairs of distinct $o,o_I$). 
  In the case of two commuting involutions $I$ and
 $J$, there are five cases:
 \begin{equation*}
 \begin{split}
 1) \quad o=o_I=o_J=o_{IJ}\,, & \qquad
 2) \quad o=o_I\ne o_J=o_{IJ}\,,
\\
3) \quad o=o_J\ne o_I=o_{IJ}\,,& \qquad
4) \quad o=o_{IJ}\ne o_I=o_{J}\,,
 \\
5)  \quad o,o_I, o_J,o_{IJ}\ & {\rm all\ distinct}\,,  
  \end{split}
 \end{equation*}
 and we call 
  \bea \nonumber
 x_{IJ}=x_{JI} &=& \#\{o\in O | o=o_I=o_J 
 = o_{IJ}          \}\\ \nonumber
 y_{IJ}&=& \#\{\{o,o_J\}| o=o_I\ne o_J=o_{IJ}\}\\ \label{xyzvwIJ}
  z_{IJ}=y_{JI}&=& \#\{\{o,o_I\}| o=o_J\ne o_I=o_{IJ}\}\\ \nonumber
   v_{IJ}=v_{JI}&=& \#\{\{o,o_{I}\}| o=o_{IJ}\ne o_I=o_J\}\\ \nonumber
   w_{IJ}=w_{JI}&=& \# \{ \{o,o_I, o_J, o_{IJ}\} |  o,o_I, o_J,o_{IJ}\ {\rm all\ distinct }\}
 \eea
 For $I$ and $J$ standing for  the mirror and the orientation reversal, those are the five cases  
 discussed by Arnold \cite{Arnold}. We note that the relation between (\ref{rsI}) and (\ref{xyzvwIJ}) is
 $$ r_I= x_{IJ}+ 2 y_{IJ}\,,\quad s_I= z_{IJ}+ v_{IJ}+2w_{IJ}\,. $$
 For three 
 involutions, there would be 15 cases (in general, the number of cases is given by a Bell number) but we shall refrain from listing them here. 
 
 
\subsection{The \dual image of a map}
\label{bicoloredtobicolorable}
 
We first examine the effect of ({\colour}) swapping (or equivalently, of interchanging  all overcrossings and undercrossings
in a knot diagram). 
Consider a bi{\colour}ed curve described by some $\sigma\in Y'$ and its ({\colour}) swap described by $\sigma_s$. 
What is the relation 
between $\sigma$ and $\sigma_s$? Let $\rho$ be fixed equal to $\rho_0$ as above,
$\tau=\sigma \rho_0$ and $\tilde \rho= \sigma \rho_0 \sigma^{-1}$. 
Then  swapping {\colour}s implies to exchange 
$\rho$ and $\tilde \rho$, and $\sigma$ and $\tau^{-1}$, 
but also to change the 
labelling of edges in such a way that $\tilde \rho$ takes the form $\rho_0$. 
A  permutation $\gamma$ that carries over that change of labelling must satisfy
$$ \tilde \rho= \sigma \rho_0 \sigma^{-1}=\gamma^{-1}\rho_0 \gamma\,,$$
{the general solution of which is $\gamma= \gamma'\sigma^{-1}$ with $\gamma'\in\CC_\rho$. Up to $\CC_\rho$-equivalence
we may just choose $\gamma=\sigma^{-1}$.
Then after conjugation by $\gamma$, 
\bea\nonumber \sigma_s&=&\gamma \tau^{-1}\gamma^{-1}=\gamma \rho_0 \sigma^{-1}\gamma^{-1}=
\gamma \sigma^{-1} \gamma^{-1}\, \gamma \sigma \rho_0\sigma^{-1}\gamma^{-1}\\
&=&
\gamma \sigma^{-1} \gamma^{-1}  \, \gamma \tilde \rho \gamma^{-1} =\gamma\sigma^{-1}\gamma^{-1}
\rho_0\,,\eea
which for the above choice $\gamma= \sigma^{-1}$ 
reduces to
\be \label{newsigma}\sigma_s= \sigma^{-1} \rho_0 
\Longleftrightarrow\ \sigma \sigma_s =\rho_0\,.  \ee
Hence the {\colour}ed curve (or the alternating knot 
diagram) and its swapped version are described by $\sigma$ and $\sigma_s=\sigma^{-1} \rho_0$. 
We refer to the $\CC_\rho$-orbits of $\sigma$ and $\sigma_s$ as {\it swapped orbits} $o$ and $o_s$.

If $n$ is odd, the signature of $\rho_0$, a product of an odd number of transpositions, is $-1$, and  $\sigma$ and $\sigma_s=
\sigma^{-1}\rho_0$ cannot be conjugate
in $S_{2n}$, and {\it a fortiori} cannot belong to the same orbit  under the action of $\CC_\rho$: 
$\sigma\nsim \sigma_s$, 
where $\sim$ and its negate $\nsim$ refer to conjugacy with respect to the group $\CC_\rho$.
Another argument is that 
$\sigma\sim \sigma_s$ would imply that the numbers of white and shaded faces are equal, 
hence $\#$ faces is even, in contradiction with Euler formula for $n$ odd.

In general, using the terminology of (\ref{rsI}), 
for given $n$ and  genus $g$, let $r_s$ be the number of {\it self-swapped orbits}, \ie such that $o=o_s$, 
and $s_s$ be the number of {\it pairs} of non self-swapped orbits $\{o,o_s\}$, \ie such that $o\ne o_s$. 
Thus $r_s=0$ for $n$ odd and all genera, while for example, in genus 0, we find 
\bea\nonumber
n=2 & r_s=1 & s_s=1 \\ \nonumber
n=4 & r_s=5 & s_s=16 \\
n=6 & r_s=33 & s_s=555\\ \nonumber
n=8 & r_s=249 & s_s=27\,155\\ \nonumber
n=10 & r_s={2036} & s_s=1\,624\,602\,.\nonumber
\eea

For any genus $g$, the number of $Y^{\prime}_g$ orbits, \ie of {\it bi{\colour}ed UO curves} of genus $g$ is thus given by $r_s+2s_s$, while those 
in which we identify the two {\colour}s, namely the {\it bi{\colour}able} UO curves, have a cardinality equal to $r_s+s_s$.
As we discussed already, for $g=0$, bi{\colour}ability is not a constraint,  and we recover the number of 
UO curves found in Sect.\ \ref{Xpmethod}, while for $g>0$, the UOc bi{\colour}able curves are a subset of the UO curves, 
see below Sect.\ \ref{UOUUimm} for a general discussion.

\normalcolor

\subsection{Mirror image of a map}
On maps/orbits of $Y'_g$ we may also define a mirror transformation.
The latter swaps $\sigma$ and $\tau$, hence, if $\rho=\rho_0$ is fixed, changes $\sigma$ into $\sigma
\rho_0$. Maps are either  ``achiral", if $\sigma$ and $\sigma_m:=\sigma\rho_0$ belong to the same orbit, and
we write $o=o_m$, 
or appear in chiral pairs $\{\sigma, \sigma_m\}$, when 
$\sigma_m\,\, \nsim\sigma$, or $o\ne o_m$. 
Again, for $n$ odd, as $\rho_0$ has an
odd signature,  $\sigma$ and $\sigma_m$ cannot belong to the same orbit. 

In general, for given $n$ and  genus $g$, let $r_m$ be the number of achiral orbits, \ie such that 
$o=o_m$, and $s_m$ be the number of chiral {\it pairs} of  orbits $\{o,o_m\}$, $o_m\neq o$. 
Thus $r_m=0$ for $n$ odd and all genera, while for example, in genus 0, we find 
\bea\nonumber
n=2 & r_m=1 & s_m=1 \\ \nonumber
n=4 & r_m=5 & s_m=16 \\
n=6 & r_m=15 & s_m=564\\ \nonumber
n=8 & r_m=97 & s_m=27\,231\\ \nonumber
{n=10} & r_m=592 & s_m=1\,625\,324\,. 
\eea

The number of orbits in $Y'_g$, \ie of bi{\colour}ed UOc curves of genus $g$ is thus given by $r_m+2s_m$, while 
those in which we identify the two mirror images, \ie the two orientations of the target surface, dubbed UUc, 
have a cardinality equal to $r_m+s_m$, see below Sect.\ \ref{UOUUimm} for a general discussion.

\subsection{Discrete operations on UOc immersions: from UOc to UOb, UUc and UUb} 
\label{UOUUimm}

Following the discussion of Sect.\ \ref{preamble}, we may analyse the interplay between swap and mirror transformations
on $\CC_\rho$-orbits of $Y'_g$ (UOc immersions) by introducing 
\begin{eqnarray*}
x_{sm} &=&\# \{\text{orbits}\ o \,\vert\,  o=o_s=o_{m}=o_{sm}  \}  \; 
, \ie  \\
{} &=& \#\{ \text{orbits that are both achiral and self-swapped}\},\\
y_{sm} &=&\#  \{\text{unordered pairs}\ \{o,o_m\} \,\vert \,   o=o_s\ne  o_{m}=o_{sm}\}  \; 
,\ie \\
{} &=&  \#\{\text{chiral pairs of self-swapped orbits}\},\\
z_{sm} &=& \#  \{\text{unordered pairs}\ \{o,o_s\}  \,\vert\, o=o_{m}\ne o_s=o_{sm} \}  \;  
,\ie \\
{} &=& \#\{ \text{\dual pairs of achiral orbits}\}, \\
v_{sm} &=&  \#  \{\text{unordered pairs} \; \{o, o_s\} \;  
 \vert \; o \neq o_s ,\;  {o=o_{sm}} \; \text{and} \;  o_s=o_{m}\}, \\
w_{sm} &=&  
\#\{ \text{4-plets of orbits} \; \{o, o_s, o_{m}, o_{sm} \} \; \vert o, o_s, o_{m}, o_{sm}\; \text{all distinct}\},
\end{eqnarray*}
hence
$$ \begin{array}{l c r}
r_s &=& x_{sm}+2y_{sm}\\
s_s &=& z_{sm} +v_{sm}+2w_{sm}\\
r_m &=& x_{sm}+2z_{sm}\\
s_m &=& y_{sm} +v_{sm} +2w_{sm}
\end{array}\,$$
In particular for $n$ odd, the vanishing of $r_s$ and $r_m$ implies 
$x_{sm}=y_{sm}=z_{sm}=~0$. 

The five independent quantities $x_{sm},\,y_{sm},\,z_{sm},\,v_{sm},\, w_{sm}$ must be determined in each $Y'_g$, 
their values are gathered in Appendix B.1.
Then counting how many times each class of orbits contributes to each type of immersions,
one obtains, for every genus:
\begin{eqnarray}\nonumber
\card{\rm{UOc}} &=& r_s+2s_s=r_m+2s_m = x_{sm}+2y_{sm}+2z_{sm}+2v_{sm}+4w_{sm}\\
\nonumber
\card{\rm{UOb}} &=&  r_s+s_s =x_{sm}+2y_{sm}+z_{sm}+v_{sm}+2w_{sm}\\
\label{uocetal}
\card{\rm{UUc}} &=& r_m+s_m=
x_{sm}+y_{sm}+2z_{sm}+v_{sm}+2w_{sm}\\
\nonumber
\card{\rm{UUb}} &=& x_{sm}+y_{sm}+z_{sm}+v_{sm}+w_{sm}
\end{eqnarray}

 For example in genus 0, and for $n=1,\dots,10$, we obtain:
  \be\label{immersions3} 
   \begin{split}
&\mathrm{Unoriented\ }S^1 \mathrm{\ in\ unoriented\ } S^2: \\
\# & \mathrm{UU\ immersions\  } = {1,2,6,19,76,376, 2194, 14614, 106\,421, {\red{823\,832}}}
    \end{split} 
      \ee
 thus extending the OEIS sequence A008989 \cite{OEIS} and Valette's recent results \cite{Valette}. 

 \normalcolor
 {}From the values given in Appendix B.1 we see that
   $x_{sm}=y_{sm}=z_{sm} =0$ for odd $g$, an empirical observation for which we have no explanation yet.  
 

\subsection{Discrete operations on OOc immersions: from OOc to OOb, UOc, UOb, OUc and OUb}
\label{discrOO}
The previous discussion, that was applied  to the set $Y^\prime$  and its $\CC_\rho$ orbits
(or, equivalently, to the set $U$ and its $D_n$-orbits)
of type UOc may be applied 
to the set $U$ of Sect.\ \ref{gaugefixing} and its $\mathbb{Z}_n$-orbits of type OOc.

\begin{myprop}
 For $\sigma $ belonging to some $\mathbb{Z}_n$-orbit of $U$\\
(i) $\sigma\mapsto \sigma_r:=r \sigma r$ belongs to the reversed orbit, with 
$r=[2n,2n-1,\cdots,2,1]$, (remember that $r^2=1$);
\\
(ii) $\sigma\mapsto \sigma_{m}:=\tau=\sigma \rho$ belongs to the mirror image of the  orbit.\\
(iii) $\sigma\mapsto \sigma_{s}:= \beta^{-1}\tau^{-1} \beta=  \beta^{-1}\rho\sigma^{-1} \beta$ 
belongs to the \dual orbit, with $\beta$ the cyclic 
permutation $(1,2,\cdots, 2n)$ as above.\\
(iv) $\sigma\mapsto \sigma_{rm}:=r\tau r=r\sigma r \rho$ belongs to the reversed mirror orbit, 
and likewise for the other compositions 
of the commuting involutions $s,r,m$.\end{myprop}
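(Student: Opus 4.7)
My plan is to treat each involution $r, m, s$ in turn, verifying that the stated formula lies in $U$ and interpreting it as the relevant geometric transformation; part (iv) then follows by composition.

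For (i), reversing the direction of traversal preserves the surface orientation, the bicolouring and the cyclic order of half-edges at every vertex; only the labelling $1, \ldots, 2n$ is reversed via $r : i \mapsto 2n+1-i$. Since relabelling by a permutation $\pi$ conjugates the encoding permutation by $\pi$, this gives $\sigma_r = r\sigma r$. A direct inspection of the cycles of $\rho$ and $\alpha_0$ shows $r \in D_n \subset \CC_\rho \cap \CC_{\alpha_0}$ ($r$ permutes the pairs $\{2k-1,2k\}$ among themselves and exchanges the two cycles of $\alpha_0$), whence
\begin{equation*}
(r\sigma r)\rho(r\sigma r)^{-1}\rho = r(\sigma\rho\sigma^{-1}\rho)r = r\alpha_0 r = \alpha_0.
\end{equation*}
For (ii) I would invoke Section~4.3: the mirror exchanges clockwise and counterclockwise and hence $\sigma$ with $\tau$; with $\rho$ fixed this gives $\sigma_m = \sigma\rho$, which lies in $U$ because $(\sigma\rho)\rho(\sigma\rho)^{-1}\rho = \sigma\rho^3\sigma^{-1}\rho = \sigma\rho\sigma^{-1}\rho = \alpha_0$ using $\rho^2 = 1$.

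The substantive case is (iii). From Section~4.2 the colour swap on $Y'$ is represented (up to $\CC_\rho$-conjugacy) by $\sigma \mapsto \sigma^{-1}\rho$, but this representative generically fails the gauge condition $\sigma\rho\sigma^{-1}\rho = \alpha_0$, so one must further conjugate by a well-chosen $\gamma \in \CC_\rho$. The key input is that $\beta$ itself lies in $U$ (take $\xi = 1$ in Proposition~\ref{generalsol}), yielding the two identities
\begin{equation*}
\beta\rho\beta^{-1} = \alpha_0\rho \qquad \text{and} \qquad \beta\alpha_0\beta^{-1} = \alpha_0^{-1};
\end{equation*}
the second may be read off directly from the cycles of $\alpha_0 = (1,3,\ldots,2n-1)(2,2n,\ldots,4)$ under the shift $i \mapsto i+1 \bmod 2n$. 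Using these, one checks that $\beta^{-1}\sigma \in \CC_\rho$ whenever $\sigma \in U$: indeed $\beta^{-1}\sigma\rho(\beta^{-1}\sigma)^{-1} = \beta^{-1}(\alpha_0\rho)\beta = \alpha_0^{-1}(\alpha_0\rho) = \rho$. So the choice $\gamma = \beta^{-1}\sigma$ is admissible and produces
\begin{equation*}
\sigma_s = \gamma(\sigma^{-1}\rho)\gamma^{-1} = \beta^{-1}\rho\sigma^{-1}\beta = \beta^{-1}\tau^{-1}\beta,
\end{equation*}
the claimed formula. Geometrically, conjugation by $\beta^{-1}$ advances the root by one edge; together with the colour-induced exchange $\sigma \leftrightarrow \tau^{-1}$ it realises the colour swap inside the gauge-fixed set $U$. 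Membership $\sigma_s \in U$ may also be checked directly:
\begin{equation*}
\sigma_s\rho\sigma_s^{-1}\rho = \beta^{-1}\tau^{-1}(\beta\rho\beta^{-1})\tau\beta\rho = \beta^{-1}\tau^{-1}(\alpha_0\rho)\tau\beta\rho = \beta^{-1}\rho\beta\rho = \alpha_0,
\end{equation*}
using the rearrangement $\tau^{-1}(\alpha_0\rho)\tau = \rho$ of $\sigma \in U$ in the middle step and $\beta^{-1}\rho\beta = \alpha_0\rho$ at the end.

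Part (iv) follows because the three involutions commute geometrically, and their algebraic realisations commute termwise. For example $\sigma_{rm} = r(\sigma\rho)r = (r\sigma r)(r\rho r) = (r\sigma r)\rho$ since $r\in\CC_\rho$, matching the stated formula; the other compositions are handled by identical short manipulations. The hard step I expect is identifying the gauge-fixing in (iii): the Section~4.2 swap representative is only determined up to $\CC_\rho$-conjugacy and generically leaves $U$, so the content of the claim is the specific choice $\gamma = \beta^{-1}\sigma$, resting on the two non-trivial identities displayed above.
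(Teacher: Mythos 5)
Your proposal is correct and follows essentially the same route as the paper: the substance in both cases is checking that $\sigma_r$, $\sigma_m$, $\sigma_s$ remain in $U$, via the identities $r\rho r=\rho$, $r\alpha_0 r=\alpha_0$ and $\beta^{-1}\rho\beta\rho=\alpha_0$ (your verification of (iii) via $\tau^{-1}\tilde\rho\,\tau=\rho$ is a harmless reshuffling of the paper's computation with $\sigma=\beta\xi$, $\xi\in\CC_\rho$). Your explicit derivation of the formula for $\sigma_s$ by conjugating the Sect.\ 4.2 swap representative with $\gamma=\beta^{-1}\sigma\in\CC_\rho$ is a welcome elaboration of a step the paper leaves implicit, but it does not change the argument.
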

\begin{proof}
In each case, it is clear that the transform of $\sigma$ carries out the required 
transformation.  The important point is that if $\sigma$ belongs to $U$, \ie satisfies (\ref{newgauge}), 
so do $\sigma_r, \sigma_m$ and $\sigma_s $. This is obvious for $\sigma_m$; it 
follows from the identities 
$r \rho r=\rho$ for $\sigma_r$ and $r\alpha_0 r=\alpha_0$ for $\sigma_r$;  and from
$\beta^{-1}\rho \beta \rho=\alpha_0$ for $\sigma_s $, if one remembers
that $\sigma=\beta \xi$, $\xi\in\CC_\rho$:
$$ \sigma_s \rho \sigma_s^{-1}\rho=\beta^{-1}\rho \sigma^{-1} \beta \rho \beta^{-1}\sigma \rho \beta \rho
= \beta^{-1} \rho \xi^{-1} \rho \xi \rho \beta \rho=\beta^{-1} \rho \beta \rho=\alpha_0\,.$$
\end{proof} 

Now define once again along the lines of (\ref{xyzvwIJ})
\bea \nonumber
x_{sr} &=& \# 
\{ \mathbb{Z}_n{\rm -orbits}\ o| o=o_s=o_r=o_{sr}\}\\ \nonumber
y_{sr} &=& \# 
\{{\rm pairs}\ \{o,o_r\}| o=o_s\ne o_r=o_{sr} \}\\ 
z_{sr} &=& \# 
\{{\rm pairs}\ \{o,o_s\}| o=o_r\ne o_s=o_{sr}  \}\\ \nonumber
v_{sr} &=& \# \{ {\rm pairs}\ \{o, o_{s}\} | o=o_{sr}\ne o_s=o_{r}\} \\ \nonumber
w_{sr}&=& \# \{ {\rm quadruplets}\ (o, o_{sr}, o_r, o_s), \rm{\ all\ non\  equal} \}
\eea
Their values are gathered in Appendix B.2.

Then
\bea \nonumber
\card{\rm OOc} &=& x_{sr} + 2y_{sr} + 2z_{sr} + 2v_{sr} + 4w_{sr}\\ \nonumber
\card{\rm OOb} &=& x_{sr} + 2y_{sr} + z_{sr} + v_{sr} + 2w_{sr}\\ \label{systxyzvwr}
\card{\rm UOc} &=& x_{sr} + y_{sr} + 2z_{sr} + v_{sr} + 2w_{sr}\\ \nonumber
\card{\rm UOb} &=& x_{sr} + y_{sr} + z_{sr} + v_{sr} + w_{sr}
\eea

A similar discussion can be carried out on the action of the involutions $s$ and $m$ on the orbits of OOc,
 expressing $\card{{\rm OOc}}$, $\card{{\rm OOb}}$, $\card{{\rm OUc}}$ and $\card{{\rm OUb}}$ in terms of
new numbers $x_{sm},\, y_{sm},\, z_{sm},\, v_{sm},\, w_{sm}$\footnote{By ``new", we mean that they are relative 
 to the OOc orbits and that their values differ from those defined and listed below in App. B.1.}}. 
  The values of these five parameters are gathered in Appendix B.3.
Then
 \bea \nonumber
\card{\rm OOc} &=& x_{sm} + 2y_{sm} + 2z_{sm} + 2v_{sm} + 4w_{sm}\\ \nonumber
\card{\rm OOb} &=& x_{sm} + 2y_{sm} + z_{sm} + v_{sm} + 2w_{sm}\\ \label{systxyzvwm}
\card{\rm OUc} &=& x_{sm} + y_{sm} + 2z_{sm} + v_{sm} + 2w_{sm}\\ \nonumber
\card{\rm OUb} &=& x_{sm} + y_{sm} + z_{sm} + v_{sm} + w_{sm}
\eea
 
 {}From the values given in Appendix B.2 and Appendix B.3, one observes
that $x_{sr}$, $x_{sm}$, $y_{sr}$ and $y_{sm}$  vanish for all $(n,g)$, meaning that $\sigma\sim
\sigma_s$ never occurs. Moreover, for $n$ even,  $z_{sr}=v_{sm}=0$, and for $n$ odd, $z_{sm}=v_{sr}=0$. Those are
general features:

\begin{myprop}\label{prop2}{If $\sim$ means the equivalence with respect to the adjoint action 
of $\mathbb{Z}_n$, \\
(i) 
for any $n$ and $g$, $\#\{\sigma\in U | \sigma\sim \sigma_s\}=0$, hence $x_{sr}=y_{sr}=x_{sm}=y_{sm}=0$;\\
(ii)
for any {\bf even} $n$ and any genus $g$, $\#\{\sigma\in U | \sigma\sim \sigma_r\}=0$, hence $z_{sr}=0$;
and $\#\{\sigma\in U | \sigma\sim \sigma_{sm}\}=0$, hence $v_{sm}=0$.\\
(iii) for any {\bf odd} $n$ and any genus, $\#\{\sigma\in U | \sigma\sim \sigma_m\}=0$, hence $z_{sm}=0$; 
and $\#\{\sigma\in U | \sigma\sim \sigma_{sr}\}=0$, hence $v_{sr}=0$.
 }
 \end{myprop}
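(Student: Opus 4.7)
My plan is to exhibit two $\mathbb{Z}_n$-invariants on $U$ that together change under each involution appearing in the proposition. Writing $\sigma=\beta\xi$ with $\xi\in\CC_\rho$ (Proposition~\ref{generalsol}), the $\mathbb{Z}_n$-action $\sigma\mapsto\beta^{2k}\sigma\beta^{-2k}$ restricts to the conjugation $\xi\mapsto\beta^{2k}\xi\beta^{-2k}$ inside $\CC_\rho$, since $\beta^2\in\CC_\rho$. I would take as invariants
\[
\epsilon_1(\sigma)\;:=\;\mathrm{sgn}_{S_{2n}}(\sigma),\qquad
\epsilon_2(\sigma)\;:=\;\mathrm{sgn}\bigl(\pi(\xi)\bigr),
\]
where $\pi\colon\CC_\rho\cong\mathbb{Z}_2\wr S_n\twoheadrightarrow S_n$ is the canonical projection onto the permutation of the $n$ pairs. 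Both quantities are $\mathbb{Z}_n$-invariant: $\epsilon_1$ because signature is a class function of $S_{2n}$, and $\epsilon_2$ because $\pi$ is a group homomorphism, so $\pi(\beta^{2k}\xi\beta^{-2k})$ and $\pi(\xi)$ are $S_n$-conjugate.

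The next step is to compute the effect of each of the three involutions on $(\epsilon_1,\epsilon_2)$. Using the explicit formulas $\sigma_m=\sigma\rho$, $\sigma_r=r\sigma r$, $\sigma_s=\beta^{-1}\rho\sigma^{-1}\beta$ given in the preceding proposition, together with the elementary facts $\mathrm{sgn}(\rho)=(-1)^n$, $r\beta r=\beta^{-1}$, $r\in\CC_\rho$, $\pi(\rho)=e$ (because $\rho$ fixes every pair setwise) and $\pi(\beta^{\pm 2})=(1,2,\ldots,n)^{\pm 1}$ of signature $(-1)^{n-1}$, a routine calculation yields
\[
m:(\epsilon_1,\epsilon_2)\mapsto((-1)^n\epsilon_1,\,\epsilon_2),\quad
r:(\epsilon_1,\epsilon_2)\mapsto(\epsilon_1,\,(-1)^{n-1}\epsilon_2),\quad
s:(\epsilon_1,\epsilon_2)\mapsto((-1)^n\epsilon_1,\,(-1)^{n-1}\epsilon_2),
\]
and the compositions $sm$, $sr$, $rm$ are obtained by multiplying the factors componentwise.

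The three items then follow by inspection of the table. For (i), the action of $s$ always flips at least one of the two invariants --- $\epsilon_1$ when $n$ is odd and $\epsilon_2$ when $n$ is even --- so $\sigma\nsim\sigma_s$ unconditionally. For (ii) with $n$ even, both $r$ and $sm$ act as $(\epsilon_1,\epsilon_2)\mapsto(\epsilon_1,-\epsilon_2)$, giving $\sigma\nsim\sigma_r$ and $\sigma\nsim\sigma_{sm}$. For (iii) with $n$ odd, both $m$ and $sr$ act as $(\epsilon_1,\epsilon_2)\mapsto(-\epsilon_1,\epsilon_2)$, giving $\sigma\nsim\sigma_m$ and $\sigma\nsim\sigma_{sr}$. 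The only subtle point is identifying the second invariant: the ordinary $S_{2n}$-signature $\epsilon_1$ alone suffices only for the $n$ odd cases, and the hard part is recognising that the $S_n$-signature $\epsilon_2$ extracted from the wreath product structure of $\CC_\rho$ is precisely what is needed to dispose of the $n$ even cases, with the decisive non-trivial factor $(-1)^{n-1}$ arising from the fact that $\pi(\beta^2)$ is an $n$-cycle in $S_n$.
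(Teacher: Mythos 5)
Your proof is correct and rests on exactly the same two parity invariants as the paper's own argument: the $S_{2n}$-signature (which disposes of the odd-$n$ cases via $\mathrm{sgn}(\rho)=(-1)^n$) and the signature of the image of $\xi$ under the wreath-product projection $\CC_\rho\to S_n$ (which disposes of the even-$n$ cases via the fact that $\phi(\beta^2)$ is an $n$-cycle). The only difference is presentational — you tabulate the action of $s$, $m$, $r$ on the pair of invariants once and read off all six non-coincidences, whereas the paper runs the same signature contradictions case by case.
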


\begin{proof} First, one notices that $\sigma\nsim \sigma_s$ is certainly true for $n$ odd, see 
Sect.\ \ref{bicoloredtobicolorable}. We thus turn to $n$ even. 
We write $\sigma =\beta \xi$ as  in Prop.\ \ref{generalsol} and note that $\beta^2$ is a generator of the $\mathbb{Z}_n$ group and  that $\rho,\, \beta^2$ and  $\xi$ are
in the centralizer $\CC_\rho$. By the homomorphism $\phi$ introduced in Sect.\ \ref{pairS2nCrho},
$\rho$ is mapped to the identity permutation of $S_n$ and
 $\beta^2$ to the cyclic permutation $(1,2,\cdots,n)$, which is odd for $n$ even.
Then\\ 
 (i) $\sigma\sim \sigma_s= \beta^{-1}\tau^{-1}\beta=\beta^{-1}\rho\, \xi^{-1}$ 
  means $\exists\, p\in \{0,\cdots,n-1\}$ s.t. $\beta^{2p}(\beta \xi)\beta^{-2p}= \beta^{-1}\rho\, \xi^{-1}$, or $\beta^{2p+2} \xi \beta^{-2p}= \rho\, \xi^{-1}$. 
  If we take the image of both sides by $\phi$, 
  the signature of the lhs is minus the signature of $\phi(\xi)$ while the rhs 
  has the signature of $\phi(\xi)$. There is a contradiction, q.e.d.\\
  (ii) Suppose likewise that $\sigma\sim \sigma_r= r \sigma r = \beta^{2p}(\beta \xi) \beta^{-2p} $.
  Conjugation of a permutation by $r$ shifts the labels by $-1$ and reverses its
  cycles; in particular $r\beta r=\beta^{-1}$. Thus the images of $r \xi r$ and $\xi$
  by $\phi$ 
 have the same signature.
 Notice that $r \sigma r = r \beta \xi r  = (r \beta r) (r \xi r)$, using the fact that $r^2=1$, 
 so that $\sigma_r = \beta^{-1}  r \xi r$. Supposing that $\sigma$ and $\sigma_r$ are 
 $\mathbb{Z}_n$-conjugates therefore amounts to supposing that $r \xi r$ is conjugate with 
 $\beta^{2p+2} \xi \beta^{-2p} = \beta^2 \beta^{2p} \xi \beta^{-2p}$. However the image 
 of $\beta^2$ by $\phi$ is odd for $ n$ even.
 This contradiction completes the proof of the first part of (ii). For the second part, $\sigma_{sm}\buildrel{?}\over{\sim} \sigma$, \ie
 $\sigma_{sm} =\beta^{-1}\xi^{-1}\buildrel{?}\over{=}
 \beta^{2p+1} \xi  \beta^{-2p}$, it leads to $\xi^{-2}\buildrel{?}\over{=}\beta^2$ again in contradiction with signatures for $n$ even, q.e.d.\\
  (iii) is  again a trivial consequence of the parity of permutations:  for $n$ odd,
 $\sigma\sim \sigma_m=\sigma \rho$, or $\sigma\sim \sigma_{sr}= r\beta^{-1} \rho \sigma^{-1}\beta r^{-1}$ are impossible, since $\rho$ is odd.
 \end{proof}

\begin{mytheo}\label{Theo3c} For any genus $g$, \\ 
\bea\label{ooc} \card{{\rm OOc}}&=&2\card{{\rm OOb}} \ {\rm for\ any\ } n\\
 \label{uoc} \card{\rm UOc}&=& \begin{cases}  \card{\rm OOb} &{\rm if}\ $n$\  {\rm even}\\
 2  \card{\rm UOb} & {\rm if}\ $n$\ {\rm odd}\end{cases}\,.\\
 \label{ouc} \card{{\rm OUc}}  &=&\begin{cases}
                                                      2 \card{{\rm OUb}} &{\rm if}\ $n$\  {\rm even} \\
                                                       \card{{\rm OOb}} &  {\rm if\ } n\ {\rm odd} \end{cases}\,.\\                                                       
\label{uuc} \card{\rm UUc}&=& \begin{cases} \card{\rm OUb} &{\rm if}\ $n$\  {\rm even}\\
 \card{\rm UOb} & {\rm if}\ $n$\ {\rm odd}\end{cases}\,.
 \eea
 \end{mytheo}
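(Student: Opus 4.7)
The plan is to exploit the two systems (\ref{systxyzvwr}) and (\ref{systxyzvwm}), to substitute the vanishings from Proposition~\ref{prop2}, and to supplement these with one additional non-conjugacy statement covering the composite involution $mr$.

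For the first three identities I would proceed by pure substitution. Proposition~\ref{prop2}(i) gives $x_{sr}=y_{sr}=0$ for every $n$; part (ii) adds $z_{sr}=0$ when $n$ is even, and part (iii) adds $v_{sr}=0$ when $n$ is odd. Plugging these into (\ref{systxyzvwr}) collapses its four right-hand sides to two distinct expressions per parity, and a direct comparison yields (\ref{ooc}) and the relevant branch of (\ref{uoc}). The analogous substitution in (\ref{systxyzvwm}), using $x_{sm}=y_{sm}=0$ always together with $v_{sm}=0$ for $n$ even and $z_{sm}=0$ for $n$ odd, gives (\ref{ouc}) and reconfirms (\ref{ooc}).

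For (\ref{uuc}) I would apply Burnside's lemma to the $\langle s,m,r\rangle$-action on the set $X$ of OOc orbits. Since UUc is the set of $\langle m,r\rangle$-orbits,
\[
\card{\rm UUc}=\tfrac{1}{4}\bigl(|X|+|X^m|+|X^r|+|X^{mr}|\bigr),
\]
and from the variables of (\ref{systxyzvwm}) and (\ref{systxyzvwr}) one reads $|X^m|=x_{sm}+2z_{sm}=2z_{sm}$ and $|X^r|=x_{sr}+2z_{sr}=2z_{sr}$, so Proposition~\ref{prop2} forces $|X^r|=0$ for $n$ even and $|X^m|=0$ for $n$ odd. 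The crucial extra ingredient is $|X^{mr}|=0$ for every $n$, which extends Proposition~\ref{prop2}. To establish it I would write $\sigma=\beta\xi$ with $\xi\in\CC_\rho$ as in Proposition~\ref{generalsol}; the unlabelled proposition preceding Theorem~\ref{Theo3c}, together with $r\beta r=\beta^{-1}$ and $r\rho r=\rho$, then yields $\sigma_{mr}=\beta^{-1}(r\xi r)\rho$. A $\mathbb{Z}_n$-conjugacy $\sigma\sim\sigma_{mr}$ would force $\beta^{2p+2}\xi\beta^{-2p}=(r\xi r)\rho$ for some integer $p$. For $n$ odd the signature of $\rho$ in $S_{2n}$ is $-1$, hence the two sides have opposite $S_{2n}$-signatures, a contradiction; for $n$ even the $S_{2n}$-signatures coincide, and I would instead apply the wreath-product projection $\phi:\CC_\rho\simeq (S_2)^n\rtimes S_n\to S_n$ introduced in Sect.~\ref{pairS2nCrho} and take signatures in $S_n$: the equation forces $\text{sgn}_{S_n}\phi(\beta^2)=1$, whereas $\phi(\beta^2)=(1,2,\ldots,n)$ has signature $(-1)^{n-1}=-1$.

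With $|X^{mr}|=0$ established, the identity (\ref{uuc}) will follow immediately: for $n$ even, $|X|=\card{\rm OOc}=2z_{sm}+4w_{sm}$ from the $(s,m)$-system with $v_{sm}=0$, whence $\card{\rm UUc}=\tfrac14(|X|+2z_{sm})=z_{sm}+w_{sm}=\card{\rm OUb}$; the odd case is symmetric, with the $(s,r)$-system yielding $\card{\rm UUc}=z_{sr}+w_{sr}=\card{\rm UOb}$. The main obstacle will be the extension of Proposition~\ref{prop2} with $|X^{mr}|=0$ for $n$ even: the ordinary $S_{2n}$ signature fails to separate $\sigma$ from $\sigma_{mr}$ in that case, forcing one to work through the finer $S_n$-signature carried by the semidirect-product structure of $\CC_\rho$.
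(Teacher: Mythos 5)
Your derivation of (\ref{ooc}), (\ref{uoc}) and (\ref{ouc}) by substituting the vanishings of Proposition~\ref{prop2} into the systems (\ref{systxyzvwr}) and (\ref{systxyzvwm}) is exactly the paper's argument. Where you genuinely depart from the paper is in (\ref{uuc}): the paper treats the even case by yet another analysis on the quadruple OUc, OUb, UUc, UUb, asserting that $\card{{\rm OUb}}-\card{{\rm UUc}}=\#\{o\in{\rm OUc}\,|\,o=o_r\}$ vanishes for $n$ even, and handles the odd case by falling back on (\ref{uocetal}) with $x_{sm}=y_{sm}=z_{sm}=0$; you instead apply Burnside's lemma to the $\langle m,r\rangle$-action on the set of OOc orbits, which reduces everything to the three fixed-point counts $|X^m|=2z_{sm}$, $|X^r|=2z_{sr}$ and $|X^{mr}|$. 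The price is the additional vanishing $|X^{mr}|=0$ for all $n$, which you prove correctly by the same signature arguments as in Proposition~\ref{prop2} (the $S_{2n}$-signature of $\rho$ for $n$ odd, the signature of $\phi(\beta^2)$ under the wreath-product projection to $S_n$ for $n$ even); the payoff is the uniform identity $\card{\rm UUc}=\tfrac14\left(\card{\rm OOc}+2z_{sm}+2z_{sr}\right)$, valid for every $n$ and every genus, from which both branches of (\ref{uuc}) drop out at once, and which can be tested directly against the tables of Appendices B.2 and B.3. In fact your lemma $|X^{mr}|=0$ is implicitly needed by the paper's own even-case claim --- an $r$-fixed OUc orbit $\{o,o_m\}$ could a priori arise either from $o=o_r$ or from $o=o_{mr}$, and Proposition~\ref{prop2} only excludes the former --- so your argument makes explicit, and justifies, a step the paper leaves tacit.
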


\begin{proof} Those are consequences of  relations (\ref{systxyzvwr}) and of Proposition \ref{prop2}:
 (\ref{ooc}) follows from $x_{sr}=y_{sr}=0$; (\ref{uoc}) follows from $z_{sr}=0$ for $n$ even and from $v_{sr}=0$ for $n$ odd.
For (\ref{ouc}), we perform a similar analysis relating the sets 
OOc, OOb, OUc, OUb: one finds that
$$2\card{{\rm OUb}} - \card{{\rm OUc}}= v_{sm}= \#\{{\rm pairs}\ \{o,o_{s}\} | o=o_{sm}\ne o_s\}$$
which vanishes for $n$ even,   according to Prop.\ \ref{prop2} (ii),
and that
$$\card{{\rm OUc}}- \card{{\rm OOb}}= z_{sm} = \#\{{\rm pairs}\ \{o,o_s\} 
| o=o_m\ne o_s\}$$
which  vanishes for $n$ odd, according to Prop.\ \ref{prop2} (iii). 
\\
Finally (\ref{uuc}) may be derived from the same analysis for the sets
OUc, OUb, UUc and UUb:
one finds that $$\card{{\rm OUb}}  - \card{{\rm UUc}}=\#\{o\in {\rm OUc} | o=o_r\}$$
which vanishes for $n$ even; for the second relation (\ref{uuc}) one may appeal to (\ref{uocetal}) together with 
$x_{sm}=y_{sm}=z_{sm}=0$ for $n$ odd.  \end{proof}
Remark. Recall that for genus $0$, $\card{\rm OOb}=\card{\rm OO}$ and thus, from Theorem 4, we have $\card{\rm UOc}=\card{\rm OO}$ if $n$ is even, and  $\card{\rm UOc}=2 \card{\rm UO}$ if $n$ is odd. \\[5pt]

Note that as a by-product of this discussion, we have obtained now the number of
spherical ($g=0$) immersions of types OO and  OU, 
 \be\label{immersionsOO}\!\!\!\!\!\!\!\!\!
 \begin{split}
&\mathrm{Oriented\ }S^1 \mathrm{\ in\ oriented\ } S^2: \\
 \# & \mathrm{OO\ immersions\ }  = 1,3,9,37,182,1143,7553, {54\, 559}, 412\,306,  {\red{3\,251\,240}},\cdots
   \end{split}
   \ee
   \be\label{immersionsOU} \!\!\!\!\!\!\!\!\!
    \begin{split}
&\mathrm{Oriented\ }S^1 \mathrm{\ in\ unoriented\ } S^2: \\
 \# & \mathrm{OU\ immersions\ }=1,2,6,21,97, 579, 3812, 27328, 206\,410,  {\red{1\,625\,916}},  \cdots
    \end{split} \ee
thus extending the OEIS sequences A008986,  A008988\, \cite{OEIS} and Valette's recent results \cite{Valette}. 


\section{Immersions of types OO,  UO, UO and UU from cyclic permutations of $S_{2n}$}
\label{Zpmethod}

\subsection{The subset $Z^\prime =[2n]$ of $ Z =S_{2n}$ and its orbits  for the adjoint action of a particular $S_n$ subgroup
(``Z method")}

\label{Z}
In the present section, we shall count the number of orbits in 
a particular conjugacy class of $Z=S_{2n}$, namely the set $Z^{\prime}$ of its cyclic permutations, 
under the action of a particular subgroup $C'_\rho$ isomorphic with $S_n$.
Our goal is to determine the numbers of immersions of  a circle in a Riemann surface  of given genus, {\bf irrespective of the bicolourability condition} that we introduced in the previous section. To achieve this,   we first consider an oriented circle and   make use  of another labelling of maps by permutations of $S_{2n}$.

\paragraph{The ``Z method".} 
Consider a map, the edges of which are oriented in a consistent way for our purpose,
namely with incoming edges at each vertex next to one another, see Fig.\ \ref{versionv3}.
Let us  label the  edges of such a map by an index $i$ running from 1 to $2n$. At each vertex, there is an involution $\rho\in [2^n]
\subset S_{2n}$
which exchanges the labels  of the two incoming edges, and a permutation $\pi$ that yields the labels of the outgoing edges,
see Fig.\ \ref{versionv3}. The condition that the map has a single component amounts to saying that 
$\pi$ has a single cycle, $\pi\in [2n]$. As before, we can fix $\rho$, for example to be equal to
$\rho_0=(1,2)(3,4)\cdots(2n-1,2n)$, a product of $n$ disjoint transpositions. 
With that convention, the integers $(2a-1,2a)$, $a=1,\cdots n$, label the $a$-th pair of incoming edges, ordered, say, in a clockwise way.
Then  the number of topologically  inequivalent oriented maps equals the number of orbits of $Z'=   [2n]$ under the conjugate action of a subgroup of $S_{2n}$,  
made of permutations that map odd (resp. even) labels onto odd (even) labels and commute with $\rho_0$.
As it consists of permutations of the $n$ pairs $(2a-1,2a)$, $a=1,2,\cdots,n$, it is isomorphic with $S_n$ and has order $n!$.
We shall usually denote it by  $\CC'_\rho$.

\begin{figure}[htbp]
 \centering
\includegraphics[width=20pc]{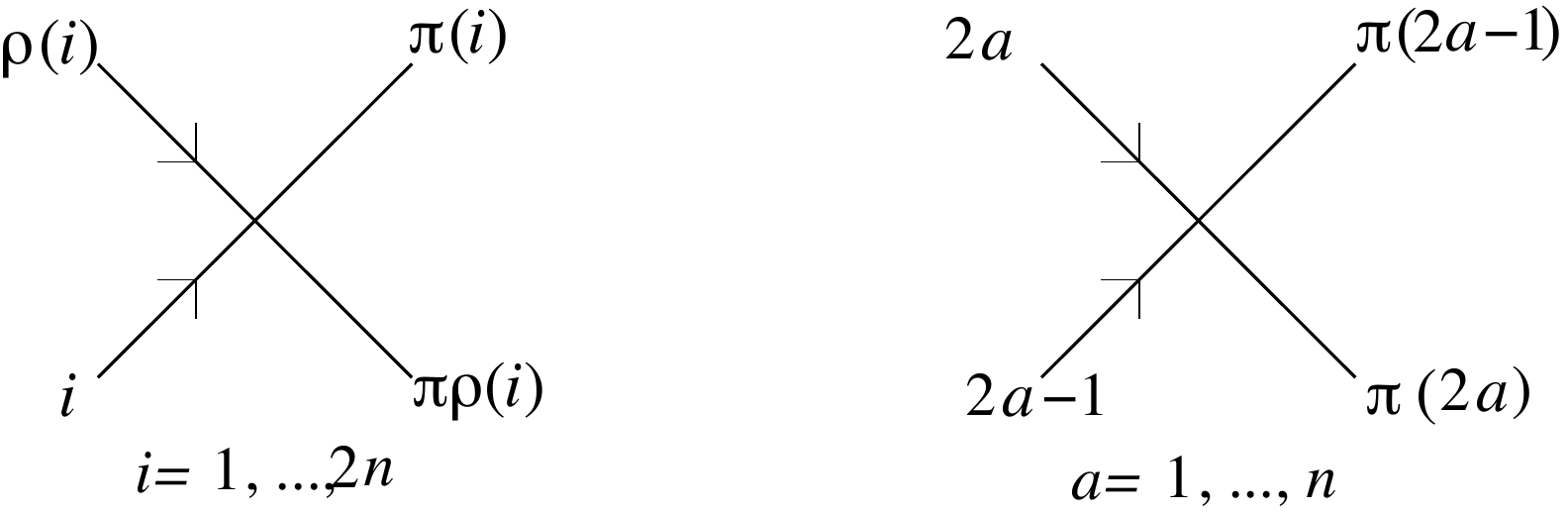}\\[10pt]  
\includegraphics[width=10pc ]{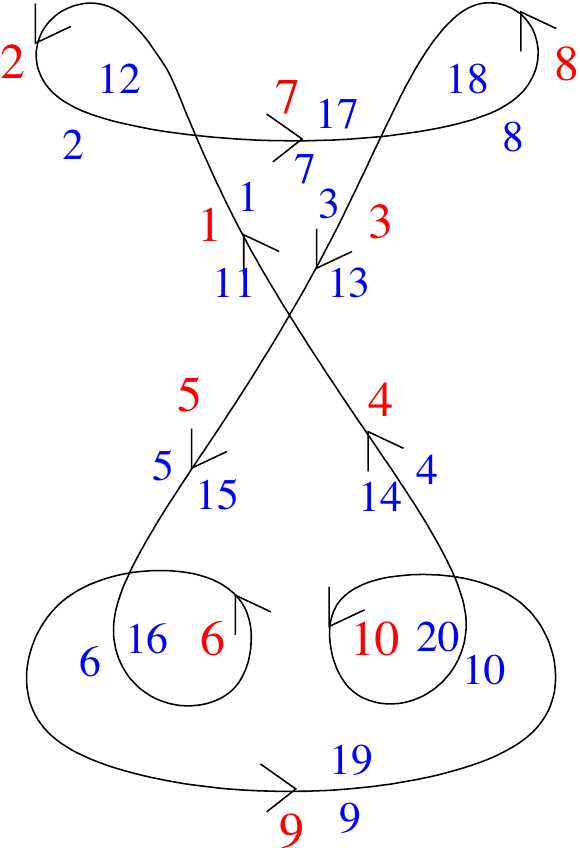}
\caption{\label{versionv3} Above, left : labelling oriented edges; right: special choice of $\rho$. 
Below, defining the new labelling of edges: in red the original labelling from $1$ to $2n$, in blue 
the new one from $1$ to $4n$.} 
\end{figure}

{In order to study the genus of the corresponding map, we now associate with the permutation $\pi\in S_{2n}$ 
another permutation $\psi_\pi\in S_{4n}$. The idea is to duplicate the edge labels so as to label 
separately the left and the right sides of each edge (or in the fat graph picture\,\cite{tH}, to label independently 
each of the double lines):  
we choose to label the right side of the oriented edge originally labelled 
by $i\in\{1,2n\}$ by $i$, and its left side by $i+2n$, see Fig.\ \ref{versionv3} bottom  for an
illustration. The permutation $\psi_\pi$ then describes 
the succession of these labels as each face is travelled clockwise. }
The transformation $\pi\mapsto \psi_\pi$ (not a group homomorphism) is easily implemented,  for $1\le i\le 2n$,
\bea\nonumber 
\quad \psi_\pi(i) &=&\begin{cases}
 \pi(i+1) & \mathrm{if}\ i\  \mathrm{is\ odd}, \\
i-1+2n & \mathrm{if}\ i\  \mathrm{is\ even} \end{cases}
\\
\quad  \psi_\pi(i+2n)&=&\begin{cases} \pi^{-1}(i)+1+2n& \mathrm{if}\ \pi^{-1}(i)\  \mathrm{is\ odd}\\  \pi(\pi^{-1}(i)-1) & \mathrm{if}\ \pi^{-1}(i)\  \mathrm{is\ even} \end{cases}
\eea

\paragraph{Example of encoding.}
As an example, the bottom diagram of Fig.\ \ref{versionv3} is encoded by permutations
\bea\nonumber
\pi&=&[2,7,5,1,6,9,8,3,10,4]  =  (1,2,7,8,3,5,6,9,10,4) \\
\psi_\pi &=& [7,11,1,13,9,15,3,17,4,19,5,12,8,10,14,16,2,18,6,20]\\ \nonumber
 &=&(1,7,3)(2,11,5,9,4,13,8,17)(6,15,14,10,19)(12)(16)(18)(20)\,.
\eea
The genus of the map is then given  by the Euler characteristics, 
\be c(\psi_\pi)  =n + 2 -2g\,.\ee
Filtering the set $Z'=[2n]$, resp. its orbits under the action of $\CC^\prime_\rho$, with that criterion yields the sets $Z^{\prime}_g$, resp. their orbits; the number of
orbits of $Z^{\prime\prime}:= Z'_0$  
is the  number of immersions of an oriented circle in the oriented sphere.

\begin{mytheo}
\label{theo3}   
  Call $\rho = (1, 2)(3, 4)\ldots (2n-3, 2n-2)(2n-1, 2n) \in [2^{n}] \subset  S_{2n}$, using cycle notation, and  $\CC'_\rho$, the subgroup of $Z = S_{2n}$, isomorphic with $S_n$, that commutes with $\rho$ and
   permutes odd resp. even integers among themselves.
Circle immersions of the oriented circle in an 
{oriented} surface of genus $g$, or OO immersions for short, are in bijection with the orbits of $\CC'_\rho$ acting by conjugacy on the set of permutations $\pi$ that belong to $Z^\prime = [2n]$, the subset of cyclic permutations of $Z$, and such that the associated permutation 
$\psi_\pi\in S_{4n}$, defined previously,  satisfies the condition
\be  c(\psi_\pi) = n+2-2g\,,  \ee 
$c(x)$ being the function that gives the number of cycles (including singletons) of the permutation $x$. 
 \end{mytheo}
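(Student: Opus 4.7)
The plan is to reduce Theorem~\ref{theo3} to three independent steps: a labelled encoding of maps by $(\rho_0,\pi)$, a face count via the auxiliary permutation $\psi_\pi$, and an identification of the remaining relabelling freedom with $\CC'_\rho \cong S_n$.

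\emph{Step 1 (Encoding).} Given an oriented, 4-valent, one-component map with $n$ vertices cellularly embedded in an oriented surface $\Sigma$, in which the two incoming edges are adjacent at every vertex, I would first exhibit a canonical labelling: assign labels $1,\ldots,2n$ to the edges so that at vertex $a$ the clockwise-first incoming edge carries label $2a-1$ and the second label $2a$. This makes $\rho$ equal to $\rho_0=(1,2)(3,4)\cdots(2n-1,2n)$. Define $\pi(i)$ to be the label of the edge reached after traversing edge $i$ and passing straight through its head vertex (each crossing being a transverse double point, the straight-through rule is unambiguous). The data $(\rho_0,\pi)$ reconstruct the labelled map: $\rho_0$ pairs the incoming labels at each vertex, $\pi$ supplies the two outgoing labels $\pi(2a-1),\pi(2a)$ at vertex $a$, and the cyclic order there is then forced. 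One-componentness of the curve is equivalent to $\pi$ having a single cycle of length $2n$, i.e.\ $\pi\in Z'=[2n]$.

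\emph{Step 2 (Faces and the genus equation).} To compute the genus I would pass to the associated fat graph by doubling each oriented edge label $i\in\{1,\ldots,2n\}$ into a right side (kept as $i$) and a left side ($i+2n$). The permutation $\psi_\pi\in S_{4n}$ given in the excerpt is then, by construction, the face-boundary permutation of this fat graph: its action follows the clockwise boundary of a face, jumping across a vertex according to $\rho_0$ and to the straight-through rule, with the four parity cases in the definition keeping track of whether one exits along an incoming or an outgoing edge side. A direct, case-by-case verification shows that the cycles of $\psi_\pi$ are in bijection with the faces of the embedded map. With $V=n$, $E=2n$, Euler's formula yields
\be
F=c(\psi_\pi)=V-E+(2-2g)=n+2-2g,
\ee
which is exactly the condition singled out in the theorem and filters $Z'$ by genus into the $Z'_g$.

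\emph{Step 3 (Relabelling group and orbits).} A change of edge labels by $\gamma\in S_{2n}$ acts on the encoding as $(\rho_0,\pi)\mapsto(\gamma\rho_0\gamma^{-1},\gamma\pi\gamma^{-1})$. Preserving the fixed form $\rho_0$ requires $\gamma$ to lie in the centralizer $C(S_{2n},\rho_0)$, the hyperoctahedral group $BC_n=\mathbb{Z}_2\wr S_n$. However, the labelling convention of Step~1 depends on the orientations of both $\Sigma$ and the curve: together they determine, at each vertex, which incoming edge is clockwise-first. Hence we must further forbid the involutions $(2a-1,2a)$ that swap the two incoming edges at a vertex. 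The remaining relabellings are precisely those $\gamma$ sending odd labels to odd and even to even while acting on the blocks $\{2a-1,2a\}$ as a permutation of $\{1,\ldots,n\}$, i.e.\ the subgroup $\CC'_\rho\cong S_n$. Quotienting the set of labelled encodings of genus $g$ by this action yields the claimed bijection with OO immersions.

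\emph{Main obstacle.} Steps 1 and 3 are essentially bookkeeping on the centralizer. The delicate point is Step 2: verifying that $\psi_\pi$, as defined by the asymmetric four-case formula, really is the face permutation of the fat graph. One must check in each parity case for $i$ and for $\pi^{-1}(i)$ that one step of $\psi_\pi$ corresponds to advancing from one edge side to the next along a clockwise face boundary, which requires carefully unpacking how labels in $\{1,\ldots,2n\}$ versus $\{2n+1,\ldots,4n\}$ encode the two sides of an oriented edge and how the straight-through rule at a crossing toggles sides. Once this is confirmed, the Euler argument and the identification of $\CC'_\rho$ complete the proof.
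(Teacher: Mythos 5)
Your proposal follows essentially the same route as the paper: its own justification of Theorem \ref{theo3} consists precisely of the encoding of the oriented map by $(\rho_0,\pi)$ with $\pi$ the next-edge permutation (one component $\Leftrightarrow$ $\pi\in[2n]$), the identification of $c(\psi_\pi)$ with the number of faces of the associated fat graph followed by Euler's formula, and the observation that fixing $\rho_0$ together with the orientation-induced clockwise ordering of the two incoming edges at each vertex cuts the relabelling group down from the centralizer $BC_n$ to $\CC'_\rho\simeq S_n$. One small slip to correct: in Step 2 the intermediate expression should read $F=E-V+(2-2g)$ rather than $V-E+(2-2g)$; with $V=n$ and $E=2n$ this indeed gives the stated $c(\psi_\pi)=n+2-2g$.
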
 

Remarks. \\
The group $\CC'_\rho$, isomorphic with $S_n$, is contained in the centralizer $\CC_\rho$ of $\rho$ in $S_{2n}$.  It is generated by the pairs of transpositions $(1,3)(2,4)$, $(1,5)(2,6)$, $(1,7)(2,8)$, $\ldots$, $(1,2n-1) (2, 2n)$.
Notice that $C(S_{2n}, \CC'_\rho) = \{1, \rho \}$. 
 One can see that $\CC'_\rho$ is precisely the subgroup $S_{n}$ of $S_{2n}$ that allows one to build the subgroup $\CC_\rho$ used in the previous two sections as a wreath product (see the comments in Sect.\ \ref{pairS2nCrho}).\\
As already mentioned in the introduction, the 4-valent maps that we consider also define cellular embeddings of particular graphs called ``simple assembly graphs without endpoints'' in \cite{BDJSV}.
In this reference the authors introduce  the notion of genus range of a given graph (the set of all possible genera of surfaces in which the graph can be embedded cellularly), a notion that is also studied and generalized    in \cite{Arredondo}\footnote{We thank an anonymous referee for pointing out these two references.}.
Their work uses the same ribbon (or fat) graph construction as ours, a construction that was described  in a quantum field theory context\,\cite{tH}, and in \cite{Carter} as a tool for classification of immersed curves;   their encoding of maps use chord diagrams and Gauss codes. In contrast, the methods presented in this section do not use chord diagrams but introduce a way to encode the relevant graphs (and their fat partners) in terms of  permutations and relate systems of representatives for different 
types of immersions to double cosets of appropriate finite groups (see below).

\paragraph{From cyclic permutations on $2n$ elements to simple closed curves with $n$ crossings in Riemann surfaces.}
We described how to associate a cyclic permutation to the image of an immersion in an oriented Riemann surface of genus $g$, more precisely to a closed curve, drawn in the plane, with $n$ regular crossings, and some number of virtual crossings.
Conversely, associating a closed simple curve with a given cyclic permutation $\pi$ belonging to $S_{2n}$  
is straightforward.
One draws $n$ four-valent vertices : four half-edges at each vertex, two ingoing, two outgoing, obeying the usual transversality (crossing) condition.
If $j$ is odd, $\pi(j)$ labels an in-going half-edge, $\pi(j)+1$ labels the in-going half-edge at the same vertex and located immediately to the right of the previous half-edge (using a clockwise orientation), $\pi(j+1)$ labels the outgoing half-edge corresponding to $\pi(j)$, and $\pi(\pi(j)+1)$ the outgoing half-edge corresponding to $\pi(j)+1$.
One starts with $j=1$ and obtains in this way the four half-edges associated with some vertex. One then considers, in turns,  $j=3$, $j=5$, etc, and the construction terminates since there is a finite number $n$ of vertices. The last operation is to connect the half-edges carrying the same labels. 
Of course, the obtained closed curve, drawn on a plane, will have usually more that $n$ crossings, but only $n$ of them -- those defined by the permutation $\pi$ -- should be consider as regular crossings (the others being virtual). 
The genus is determined by considering the associated fat graph, \ie the permutation $\psi(\pi)$, and using the Euler formula. 
The fact that the obtained curve indeed corresponds to the image of an immersion is taken care of by the necessity of choosing the possibly non-zero genus determined by $\psi(\pi)$.

\subsection{A partition of $Z^\prime$}
In order to get representatives, for each genus, of the orbits of  $Z^\prime$, one may first determine the orbits, and then filter them according to the genus, this is what we shall actually do.
However one can also start by partitioning the set $Z^\prime$ according to the genus (sets $Z^\prime_g$) and determine the orbits, for each $g$, in a second step. 
This latter method is, in practice, slower than the first. 
It produces as a by-product, and for each positive integer $n$, a family of numbers $\vert Z^\prime_g \vert$ that add up to $(2n-1)!$ since this is the cardinality of $Z^\prime = [2n]$. 
The same numbers could also be obtained with the first method, proceeding backwards, by using the orbit-stabilizer theorem for each orbit of $Z^\prime_g$. 
These integers are gathered, for the first values of $n$, in Table \ref{cardZprimeg}. 
Notice that each member of the above partition can itself be decomposed into strata corresponding to different sizes of the orbits: for instance, taking $n=5$, one gets $21552$ orbits corresponding to 
the union of $21480$ and $72$ orbits with respective centralizers of order $1$ and $5$. We shall not display that information.
 \begin{figure}[htbp]
\begin{gather*}
  1 \\
  4 \qquad 2 \\
  42 \qquad 66 \qquad 12 \\ 
  780 \qquad 2652 \qquad 1608 \\ 
  21552\qquad 132240\qquad 183168\qquad 25920 \\
  803760 \qquad 7984320 \qquad 20815440 \qquad 10313280 \\
  \ldots 
\end{gather*}
\caption{\label{cardZprimeg} Numbers of elements in the sets $Z^\prime_g$. Each line adds up to an odd factorial.}
\end{figure}
 
\subsection{Counting orbits and their lengths}    
\label{dblecosZ}

In order to obtain the genus decomposition for the various kinds of immersions we are interested in (OO, UO, OU and UU types), one has to 
use explicit cyclic permutations for these different kinds of immersions, together  with the method (filtering by genus) previously described in Sect.\ \ref{Z} for OO immersions.
We shall see later, in \ref{DiscreteTransfoZ}, how the introduction of discrete transformations (orientation reversal and mirror symmetry) on OO orbits allows us to refine the method and obtain the numbers of immersions for the types OU, UO and UU.
The appearance of non-trivial stabilizers complicates the counting of orbits:  in practice, one possibility is to select a random permutation $\sigma$ from the given set ($Z^\prime$), determine its conjugates, see whether or not one of them has already been selected, 
take the decision about keeping $\sigma$, or not,  throw away its conjugates, and start again (a similar method amounts to implementing  an appropriate variant of the Jeu de Taquin).  Initially, this is what the authors did.
However, the problem of selecting one representative for each orbit is simply related to a similar problem for double cosets,
 and one can take advantage of the fact that several computer algebra programs (in particular Magma, see our comments in Appendix A),  provide fast algorithms to determine such representatives. 
This other approach allowed the authors to recover and extend their previous results.
The relation between the two problems is summarized in the following proposition: 

\begin{myprop}\label{doublecosetmeth} 
For each type OO, UO, OU, or UU, of immersions of the circle, a system of representatives for the orbits of the subgroup $\CC^\prime_\rho \simeq S_{n}$ acting by conjugation on the set of cyclic permutations $[2n]$, is given by the elements of the set\footnote{There are two possible conventions for the product of two permutations. In this paper we use the right-to-left product, which is {\sl not} the convention used in \cite{Mathematica} or \cite{Magma}. Also, $\beta^y = y \, \beta \, y^{-1}$. With the other convention, one should replace $1/x=x^{-1}$ by $x$ in the next formula, or replace the pair $(H,K)$ by $(K,H)$.} $\{ \; \beta^{1/x} \; \}$, with $x \in H\backslash G / K$, where $\beta = (1, 2, 3, \ldots, 2n)$, where the subgroups $H$ and $K$ of $G=S_{2n}$ are as indicated below, and where it is understood that we choose one representative element  $x$ (a permutation) in each double coset. 
\bei
\item For OO,  one takes $H=Z_\beta$, the centralizer of the cyclic permutation $\beta$ in $G$, and $K=\CC^\prime_\rho$. 
\item For UO, one takes   $H= < \!Z_\beta, \sigma_r\!>$,  the subgroup of $S_{2n}$ generated by $Z_\beta$ and the permutation $(2, 2n)(3, 2n-1)(4, 2n-2)\ldots (n, n+2)$ that conjugates $\beta$ and $\beta^{-1}$ in $S_{2n}$ and implements orientation reversal of the source (circle),  and $K=\CC^\prime_\rho$. 
\item For OU, one takes   $H=Z_\beta$ and $K=< \!C^\prime_\rho, \rho\!>$,  the subgroup of $S_{2n}$ generated by $C^\prime_\rho$ and the permutation $\rho$ which describes mirroring in the target, see Sect. \ref{DiscreteTransfoZ}. 
\item For UU,  one takes  $H=< \!Z_\beta, \sigma_r\!>$ and $K=< \!C^\prime_\rho, \rho\!>$.
\eei
\end{myprop}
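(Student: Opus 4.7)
The plan is to treat the four cases OO, UO, OU, UU in turn, using Theorem~\ref{Coqtheo} as the backbone and then incorporating the two commuting involutions (orientation reversal of the source circle and mirror of the target surface) by enlarging $H$ on the left and $K$ on the right.

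\textbf{OO case.} Since the conjugacy class $[2n]$ equals $\mathrm{Cl}(\beta)$ and the centralizer $C(G,\beta)$ is $Z_\beta=\langle\beta\rangle$, Theorem~\ref{Coqtheo} directly gives a bijection between the $\CC'_\rho$-orbits on $[2n]$ and the double cosets $\CC'_\rho\backslash G/Z_\beta$, with representative attached to $\CC'_\rho\, g\, Z_\beta$ given by $g\beta g^{-1}$. The anti-automorphism $g\mapsto g^{-1}$ of $G$ induces a bijection $\CC'_\rho\backslash G/Z_\beta\to Z_\beta\backslash G/\CC'_\rho$. Setting $x=g^{-1}$ yields representatives $\beta^{1/x}=x^{-1}\beta x$ indexed by $x\in Z_\beta\backslash G/\CC'_\rho$. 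Well-definedness on the double coset is the computation $\beta^{1/(hxk)}=k^{-1}x^{-1}h^{-1}\beta hxk=k^{-1}\beta^{1/x}k$ for $h\in Z_\beta$, $k\in\CC'_\rho$, which lies in the same $\CC'_\rho$-orbit as $\beta^{1/x}$.

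\textbf{UO case.} A UO immersion is a $\CC'_\rho$-orbit modulo the additional equivalence of reversing the orientation of the circle. I would first verify by direct computation that $\sigma_r\beta\sigma_r^{-1}=\beta^{-1}$, then argue that in the encoding of Theorem~\ref{theo3} reversing the direction of travel along the curve replaces the successor permutation $\pi$ by $\pi^{-1}$. The identity
\[
(\beta^{1/x})^{-1}=x^{-1}\beta^{-1}x=x^{-1}\sigma_r\beta\sigma_r^{-1}x=\beta^{1/(\sigma_r^{-1}x)}
\]
shows that $\beta^{1/x}$ and its inverse are connected precisely by left multiplication of $x$ by $\sigma_r\in\langle Z_\beta,\sigma_r\rangle$, so adjoining $\sigma_r$ to $Z_\beta$ merges exactly those pairs of $Z_\beta\backslash G/\CC'_\rho$ double cosets related by orientation reversal. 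Hence the representatives of UO immersions are $\beta^{1/x}$ with $x\in\langle Z_\beta,\sigma_r\rangle\backslash G/\CC'_\rho$.

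\textbf{OU case.} One now identifies $\CC'_\rho$-orbits related by mirroring the target surface. I would identify this involution in the $Z$ coding with conjugation $\pi\mapsto\rho\pi\rho^{-1}$: geometrically, mirroring swaps the two incoming half-edges at each vertex, which is implemented by permuting the edge labels by $\rho$. The analogous computation
\[
\rho(\beta^{1/x})\rho^{-1}=\rho x^{-1}\beta x\rho^{-1}=(x\rho^{-1})^{-1}\beta(x\rho^{-1})=\beta^{1/(x\rho^{-1})}
\]
shows that right multiplication of $x$ by $\rho^{-1}\in\langle\CC'_\rho,\rho\rangle$ implements the mirror operation on representatives, yielding the claim with $H=Z_\beta$ and $K=\langle\CC'_\rho,\rho\rangle$.

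\textbf{UU case.} Since the two involutions act on opposite sides of $x$ (one on the left, one on the right), the two enlargements commute, and the UU case follows by applying both simultaneously, with $H=\langle Z_\beta,\sigma_r\rangle$ and $K=\langle\CC'_\rho,\rho\rangle$; a combined well-definedness check writing $x'=\sigma_r^{\epsilon_1}hxk\rho^{\epsilon_2}$ for $\epsilon_i\in\{0,1\}$ shows the representative $\beta^{1/x'}$ is a $\CC'_\rho$-conjugate of $\beta^{1/x}$, $(\beta^{1/x})^{-1}$, $\rho\beta^{1/x}\rho^{-1}$, or $\rho(\beta^{1/x})^{-1}\rho^{-1}$, as required.

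\textbf{Main obstacle.} The bulk of the proposition reduces to Theorem~\ref{Coqtheo}, the anti-automorphism $g\mapsto g^{-1}$, and routine double-coset bookkeeping. The substantive step lies in verifying that the geometric operations of orientation reversal on the source and mirror symmetry on the target translate, in the $Z$ coding of Theorem~\ref{theo3}, into the algebraic operations $\pi\mapsto\pi^{-1}$ and $\pi\mapsto\rho\pi\rho^{-1}$, respectively. Once this identification (to be carried out in Sect.~\ref{DiscreteTransfoZ}) is secured, the four assertions follow from the displayed conjugation identities.
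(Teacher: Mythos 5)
Your proposal is correct and follows essentially the same route as the paper: the OO case via Theorem~\ref{Coqtheo} together with the explicit check that $x'=hxk$ gives $\beta^{1/x'}=k^{-1}\beta^{1/x}k$, and the remaining cases by identifying orientation reversal and mirroring with $\pi\mapsto\pi^{-1}$ and $\pi\mapsto\rho\pi\rho$ (exactly the content of the paper's Sect.~\ref{DiscreteTransfoZ}, to which its own proof defers) and showing these are realized by left multiplication by $\sigma_r$ and right multiplication by $\rho$. If anything, your write-up is more explicit than the paper's, which only spells out the OO case.
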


The proof of this proposition, in the OO case, relies on Theorem \ref{Coqtheo}. We already know that the orbits for the adjoint action of $K$ on the conjugacy class of $\beta$ (the cyclic permutations) are in one-to-one correspondence with the double cosets of  $H\backslash G / K$. The above proposition makes this correspondence explicit in the present situation: taking $x$ and $x^\prime$ in the same double coset, we write $x^\prime = h \, x \, k$, with $h \in H=Z_\beta$ and $k \in K =\CC^\prime_\rho$ and see immediately that $\beta^{1/x^\prime} = k^{-1} \, \beta^{1/x} \, k$, so the two permutations $\beta^{1/x^\prime}$ and $\beta^{1/x}$, which are cyclic since both conjugated of $\beta$ -- which is cyclic itself -- by an element of $G$, are also conjugated by $K$ and therefore characterize the same OO orbit.
The justification for the choice of the other subgroups, appropriate to handle the immersions of types OU, UO and UU, ultimately relies on a discussion that will be carried out in the next section (discrete transformations).

Notice that if one is interested only in counting the {\sl total} number of immersions, \ie summing over all genera for each of the types OO, OU, UO and UU, one does not need to determine double coset representatives since only the total number of double cosets matters. The latter can be computed up to large values of $n$ by using Frobenius'  formula  (\ref{frobenius}) -- we remind the reader that it uses only the knowledge of the cyclic structure and size for the usual conjugacy classes: this is both simpler and  faster. 
The results for OO, \ie the number of orbits in $Z'$  are displayed in Table \ref{TableZp}, up to $n=20$.  The corresponding results for types OU, UO, UU,  can also be determined\footnote{ They have been added to the OEIS:
OO A260296, UU  A260912, UO A260847, OU A260887.}  
by using Frobenius'  formula in virtually no time up to $n=20$, those up to $n=10$, are given in Table \ref{TableXYZ2}.

The drawback of this last method is that it is not constructive, so that one has to rely on the previous approaches (brute force determination of the orbits or use of double coset representatives) to go to the next step:  filtering according to the genus. 
Actually, this last part is the bottleneck of the process as the function $\psi_\pi$ defined in the previous section takes its values in $S_{4n}$.

    \begin{table}[ht]
\caption{Number of orbits in $Z'$ (OO case)}
\centering
    \def\t{\footnotesize}
  {\footnotesize
 \begin{tabular}{ | c | r || c | r | } 
       \hline &&& \\[-3pt]
        1&1  &11& 1\,279\,935\,820\,810 \\  
2&4   &12 & 53\,970\,628\,896\,500\\ 
3& 22 & 13 & 2\,490\,952\,020\,480\,012 \\ 
4& 218  & 14 & 124\,903\,451\,391\,713\,412 \\ 
5& 3028 & 15 &  6\,761\,440\,164\,391\,403\,896  \\ 
6& 55540  & 16 & 393\,008\,709\,559\,373\,134\,184 \\ 
7& {1\,235\,526} & 17 &  24\,412\,776\,311\,194\,951\,680\,016\\ 
8&{32\,434\,108} & 18 & 1\,613\,955\,767\,240\,361\,647\,220\,648 \\ 
9&{980\,179\,566} & 19 &113\,146\,793\,787\,569\,865\,523\,200\,018\\
 10& 33\,522\,177\,088 & 20 &   8\,384\,177\,419\,658\,944\,198\,600\,637\,096  \\[2pt] \hline 
     \end{tabular}}
     \label{TableZp} 
     \end{table}
       
       \bigskip
     \noindent For $n$ a prime integer, {we found} an explicit formula for this number of orbits.
\begin{myprop}{For $n$ a prime integer, 
\be\label{formulaZp} \#\mathrm{orbits\ in }\ Z' =n-1+\frac{(2n-1)!}{n!} \,. \ee}
\end{myprop}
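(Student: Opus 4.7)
The plan is to apply Theorem \ref{Coqtheo} and then Frobenius' formula (\ref{frobenius}), exactly as was done in Sect.\ \ref{Xpmethod}. By Theorem \ref{Coqtheo}, the $\CC'_\rho$-orbits on $Z' = [2n]$ correspond bijectively to double cosets $H \backslash S_{2n} / K$, where $H = Z_\beta$ is the centralizer of a fixed $2n$-cycle $\beta$ (cyclic of order $2n$) and $K = \CC'_\rho$ (of order $n!$). Frobenius' formula then gives
\be\nonumber
\#\mathrm{orbits} \,=\, \frac{(2n)!}{2n\cdot n!}\sum_{\mu\vdash 2n} \frac{|H_\mu|\,|K_\mu|}{|G_\mu|} \,=\, \frac{(2n-1)!}{n!} \sum_\mu \frac{|H_\mu|\,|K_\mu|}{|G_\mu|}\,.
\ee
It therefore suffices to evaluate the inner sum when $n=p$ is prime.

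The key step, which I expect to be the main obstacle, is to identify exactly which partitions $\mu$ give a non-zero contribution. Since $H=\langle\beta\rangle$ is cyclic of order $2p$, the element $\beta^j$ has cycle type $[(2p/d)^{d}]$ with $d=\gcd(j,2p)\in\{1,2,p,2p\}$; thus $|H_\mu|$ is nonzero only for $\mu\in\{[2p],\,[p^2],\,[2^p],\,[1^{2p}]\}$, with respective cardinalities $p-1,\,p-1,\,1,\,1$ (via the Euler totient). On the other side, as pointed out in Sect.\ \ref{Z}, every element of $K\cong S_p$ acts on $\{1,\dots,2p\}$ with a ``doubled'' cycle type $\lambda\cup\lambda$ for some $\lambda\vdash p$. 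The partition $[2p]$ is not doubled; $[2^p]$ is doubled only if $p$ is even, and for $p=2$ it coincides with $[p^2]$ so no double-counting occurs. Only $[1^{2p}]$ (from $\lambda=[1^p]$) and $[p^2]$ (from $\lambda=[p]$) remain. This case analysis is where one must be most careful, because the primality of $p$ enters decisively: for composite $n$, partitions such as $[(2n/d)^d]$ with $d$ a proper even divisor, or doubled partitions with several distinct parts, generally do contribute and destroy the simple closed form.

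The remaining arithmetic is direct. For $\mu=[1^{2p}]$ one has $|H_\mu|=|K_\mu|=|G_\mu|=1$, contributing $1$. For $\mu=[p^2]$, $|H_\mu|=\phi(p)=p-1$, $|K_\mu|$ equals the number of $p$-cycles in $S_p$, namely $(p-1)!$, and $|G_\mu|=(2p)!/z_{[p^2]}=(2p)!/(2p^2)$, contributing $2p^2(p-1)(p-1)!/(2p)!$. Multiplying the resulting sum by $(2p-1)!/p!$ yields
\be\nonumber
\#\mathrm{orbits} \,=\, \frac{(2p-1)!}{p!} + \frac{(2p-1)!}{p!}\cdot\frac{2p^{2}(p-1)(p-1)!}{(2p)!} \,=\, \frac{(2n-1)!}{n!} + (p-1)\,,
\ee
which is the asserted formula. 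As a sanity check, the numerical values in Table \ref{TableZp} for $n=2,3,5,7,11,13,\dots$ match this expression, and the $p-1$ ``extra'' orbits correspond naturally, via the orbit-stabilizer theorem, to the $2p$-cycles $\pi$ whose square $\pi^2\in\CC'_\rho$, of which there are exactly $p\cdot(p-1)!=p!$, forming $p-1$ short orbits of length $(p-1)!$ plus... (one then checks $p!/(p-1)! \cdot (1 - 1/p) = p-1$ short orbits in excess of the generic count).
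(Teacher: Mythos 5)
Your proof is correct, but it follows a genuinely different route from the paper's. You compute the number of orbits as the number of double cosets $Z_\beta\backslash S_{2n}/\CC'_\rho$ via Theorem \ref{Coqtheo} and Frobenius' formula (\ref{frobenius}), observing that for $n=p$ prime the only conjugacy classes met both by the cyclic group $\langle\beta\rangle$ (whose powers have cycle type $[(2p/d)^d]$ with $d\mid 2p$) and by $\CC'_\rho\simeq S_p$ (whose elements have ``doubled'' cycle type $\lambda\cup\lambda$ with $\lambda\vdash p$) are $[1^{2p}]$ and $[p^2]$; the arithmetic then goes through, including the borderline case $p=2$ where $[p^2]=[2^p]$. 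The paper instead argues via the orbit--stabilizer theorem: stabilizers in $\CC'_\rho$ have order dividing $n$, hence for $n$ prime every orbit has length $n!$ or $(n-1)!$, and the authors identify geometrically exactly $n$ orbits of the short length (the diagrams with cyclic $n$-fold symmetry, parametrized explicitly and drawn in Fig.\ \ref{symmetry5} and \ref{symmetry7}); the count then follows from $(2n-1)!=n\cdot(n-1)!+(\#\mathrm{orbits}-n)\cdot n!$. Your argument is more self-contained in that it does not rest on the enumeration of the symmetric diagrams, which the paper asserts rather than proves in full detail, while the paper's version supplies the extra geometric information of which orbits are the exceptional ones; your identification of why primality is needed (composite $n$ admits further even divisors $d$ and further doubled partitions, as one checks for $n=4$) is also a useful complement. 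One small blemish: your closing ``sanity check'' is garbled --- there are $p$ short orbits, not $p-1$; the quantity $p-1$ is only the excess of the orbit count over the naive estimate $(2p-1)!/p!$ --- but nothing in the main computation depends on it.
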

\begin{proof} For any $n$, orbits of $Z^{\prime}$ have length $\card{\CC'_\rho} /d$ with $d$ a divisor of $n$, and 
 we claim there are exactly $n$ orbits of length  $\card{\CC'_\rho} /n$. 
The diagrams of {these  orbits}  have cyclic $n$-fold symmetry. 
At the possible price of introducing ``virtual crossings", (see Introduction), these diagrams may always be drawn 
in the plane in
such a way that the outmost edges 
form a convex regular $n$-gon, travelled in the clockwise or counter-clockwise way, with vertices 
numbered from 1 to $n$, and with one pair of oriented edges connecting vertices $i$ and $i+1$, and the 
other pair $i$ and $i+k \mod n$, for $k=0,1,\cdots n-2$ ($k=n-1$ yields a $n$-component
diagram). \end{proof}

\begin{figure}[htbp]
 \centering
\includegraphics[width=30pc]{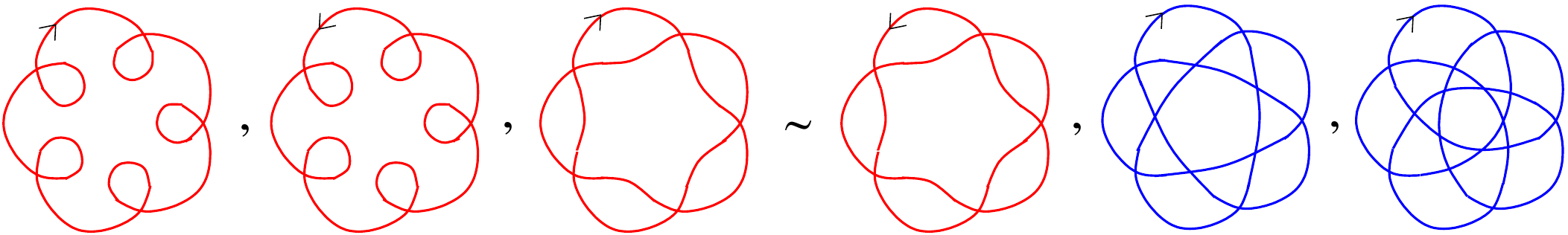}
\caption{\label{symmetry5} The $n=5$ orbits of $Z^{\prime}$ with 5-fold symmetry. Only the first three (in red) are  spherical,
the two others have higher genus (2 here); the last three are equivalent to (\ie in the same orbit as) their reversed;
in the last two, only the outmost vertices are double points, the others are ``virtual crossings" 
as explained in the Introduction. } 
\end{figure}

\begin{figure}[htbp]
 \centering
\includegraphics[width=30pc]{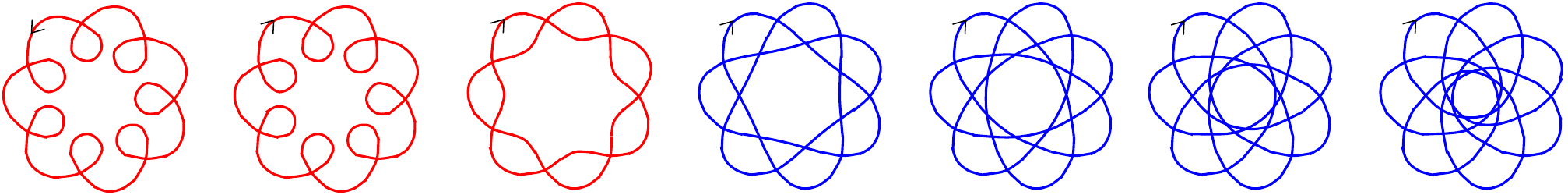}
\caption{\label{symmetry7} The $n=7$ orbits of $Z^{\prime}$ with symmetry of order 7. Only the first three (in red) are spherical,
the others have higher genus (3 here). }
\end{figure}


\subsection{From orbits of $Z^{\prime\prime}$ to spherical immersions of type OO}
According to Theorem \ref{theo3}, the numbers of   spherical immersions of type OO 
may be determined from the numbers of orbits of $Z''=Z'_0$ and agree with those computed above in (\ref{immersionsOO}). They also appear in  Table \ref{TableZ}.   {The numbers in red are still to be double-checked\dots}

\subsection{Discrete transformations of OO immersions. Immersions of  OU, UO and UU types}
\label{DiscreteTransfoZ}

We now consider the effect of the discrete transformations $r$ and $m$ on immersions of OO type.
\paragraph{Orientation reversal.}
Consider the effect of changing the orientation of the circle: it simply corresponds to $\pi\mapsto \pi^{-1}$.
  Orbits of $Z^{\prime}$ (for a given genus, in particular those of $Z''$) 
  split into two classes: those for which $\pi$ and $\pi^{-1}$ belong to the same orbit
  may be called {\it reversible} immersions;
   the others  form pairs of {irreversible} immersions.  
  \paragraph{Mirror symmetry.}
  If some immersion is described (for $\rho$ fixed as above) by (the orbit of) some $\pi$, its mirror image is 
associated with the orbit of $\pi'=\rho\pi\rho$. We call again achiral the immersions such that $\pi$ and
$\pi'=\rho\pi\rho$ belong to the same orbit, while the other form chiral pairs.

\paragraph{The five types of symmetries.}
$o$ being an orbit of $Z^\prime$ (of given genus), we call
$\bullet$ $o_r$ the orientation reversal image of $o$,   \\
$\bullet$ $o_{m}$ the chiral image of $o$,\\
$\bullet$ $o_{rm}$ the chiral image of the orientation reversal  image of $o$ (or the other way around).

By combining the two previous transformations, we thus find five types of immersions that match Arnold's classification of symmetries
  \cite{Arnold}. Following our notations of Sect.\ \ref{preamble}, 
   we call $x_{rm},y_{rm},z_{rm},v_{rm},w_{rm}$ their numbers of  elements: 
\begin{eqnarray*}
x_{rm}&=&\# \{\text{orbits} \,\vert\,  o=o_r=o_{m}=o_{rm}  \} 
\\
{} &=& \text{
 number of  orbits that are both achiral and reversible.} \\
y_{rm}&=& \#  \{\text{orbit\ pairs} \, \{o,o_m\}\,\vert\,    o=o_r, o_{m}=o_{rm}\}  
\\
{} &=& \text{
number of chiral pairs of reversible orbits.}\\
z_{rm}&=& \#  \{\text{orbit pairs} \,\{o,o_r\}\,\vert\, o=o_{m}, o_r=o_{rm} \}  
\\
{} &=& \text{
 number of irreversible pairs of achiral orbits.} \\
v_{rm} &=& \#  \{\text{orbit pairs}  \; \{o, o_r\} \,  |\; o \neq o_r \; \text{but such that} \; {o=o_{rm}} 
\ {\Leftrightarrow} \ o_r=o_{m}\} . \\
w_{rm} &=&  
\# \{\text{4-plets of orbits} \; \{o, o_r, o_{m}, o_{rm} \}, \;
\\
{}&{}& \text{where all members of each 4-plet should be distinct.}
\end{eqnarray*}

The values of those five parameters are gathered in Appendix B.4,
{\normalcolor
and one obtains, for every genus:
\begin{eqnarray}\nonumber
\card{\text{OO}} &=& 
 x_{rm}+2y_{rm}+2z_{rm}+2v_{rm}+4w_{rm}\\ \nonumber
\card{\text{UO}} &=& x_{rm}+2y_{rm}+z_{rm}+v_{rm}+2w_{rm} 
\\
\card{\text{OU}} &=& 
x_{rm}+y_{rm}+2z_{rm}+v_{rm}+2w_{rm}\\ \nonumber
\card{\text{UU}} &=& x_{rm}+y_{rm}+z_{rm}+v_{rm}+w_{rm}\,.
\end{eqnarray}}

In that way, we recover the number of  immersions for $g=0$ found in Sect.\ \ref{fromorbtoimm}, which yields a non-trivial check on  both methods. 

From Theorem 4  (eq.\ (\ref{uuc})) one shows that, for $g=0$ and $n$ odd,  
$v_{sm} = x_{rm}+z_{rm}+v_{rm}$  and  $w_{sm} = y_{rm}+w_{rm}$, the $v_{sm}$ and $w_{sm}$ parameters being those defined in Sect.\ \ref{UOUUimm} -- see also Appendix B1 (not B3 !); remember also that $x_{sm} = y_{sm} = z_{sm} = 0$ in that case.  For $g=0$ and $n$ even, we notice a one-to-one equality between the same $5$-plet $(x_{sm},y_{sm},z_{sm},v_{sm},w_{sm})$ and $(x_{rm},y_{rm},z_{rm},v_{rm},w_{rm})$, but this is only an observation for which we have no explanation, and it suggests a direct correspondence between the corresponding orbit types.

\section{Miscellaneous comments}
\subsection{Asymptotics}

The number of points in the three sets $X^\prime$, $Y^{\prime}$, $U$ or $Z^{\prime}$  are known explicitly
 and grow factorially 
\bea \nonumber \card{X'_n}  &=& (4n-2)!! 
\\
 \card{Y'_n}  &=& 2^{2n-1} n! (n-1)!  
 \\ \nonumber
  \card{U_n}  &=& 2^{n} n! 
 \\ \nonumber
  \card{Z'_n} &=& (2n-1)! 
  \eea
Then, using the classical fact that ``almost all maps are asymmetric" \cite{RW}, the asymptotic 
numbers of orbits in $X^\prime$, $Y^{\prime}$ or $Z^{\prime}$ are given by 
\bea \nonumber
\#  \CC_\sigma \mathrm{-orbits\ in\ } X'_n  &\sim& \frac{(4n-2)!!}{4^n n!} = \frac{1}{2} \frac{(2n-1)!}{n!}\sim n! \,\frac{2^{2n-1}}{2 \pi^{1/2}n^{3/2}}\\
\#  \CC_\rho \mathrm{-orbits\ \,in\ }\, Y'_n  &\sim&\# D_n\mathrm{-orbits\ in\ } U_n \sim 2^{n-1} (n-1)! \\ \nonumber
\# \mathbb{Z}_n\mathrm{-orbits\ in\ } U_n  &\sim& 2^{n} (n-1)! \\ \nonumber
\#  \CC^\prime_\rho \mathrm{-orbits\  in\ }\, Z'_n  &\sim& \frac{(2n-1)!}{n!}\sim 2 \,\#  \CC_\sigma\mathrm{-orbits\ in\ } X'_n 
\eea
Unfortunately we have no similar exact formulae for 
orbits in  $X^{\prime\prime}$, $Y^{\prime\prime}$ or $Z^{\prime\prime}$, and we have to appeal to
 empirical estimates derived by physicists in similar contexts, see for example \cite{DFGG}. 
 For each of the above quantities, one expects an exponential growth of the form
\be \label{asymp} \#_n \sim \kappa   n^{\gamma-3} a^n \ee
with $a$, $\gamma$ the ``string susceptibility" and $\kappa$ some constant,  depending on the problem at hand.\\
Here, according to Schaeffer and Zinn-Justin\cite{SZJ}, $\gamma= -\frac{1+\sqrt{13}}{6} $, corresponding to a central charge $c=-1$ in KPZ formula 
\be\label{kpz} \gamma_{\rm KPZ}(c)=\frac{c-1-\sqrt{(25-c)(1-c)}}{12} \ee
(see  for instance \cite{DFGG}, eq.\ (4.2)). 

\def\g{g}
In genus $\g$, one expects $\gamma$ in asymptotic behavior (\ref{asymp}) to be replaced by 
 \be \gamma\mapsto \gamma(\g) =(1-\g)\gamma
 \ee
 (which makes the ``double scaling limit" possible). 
Unfortunately, the order $n=10$ that we have reached is certainly much too low
to enable one to observe the onset of this asymptotic behavior. See \cite{SZJ} for a 
discussion of the logarithmic corrections to that asymptotic behavior.

\subsection{Knot census}

Applying the previous counting of maps to the census of (alternating) knots requires eliminating various types
of redundancies, kinks aka nugatory crossings, non prime and flype equivalent diagrams, following
Sundberg and Thistlethwaite's procedure \cite{MSTh}. See \cite{PZJ-website}
for a beautiful implementation including virtual knots and links.



\section{Results and conclusion}
Our results on the numbers of curves of different types and different genera are gathered in 
Tables \ref{TableXYZ1} (bi{\colour}able and/or bi{\colour}ed immersions) and \ref{TableXYZ2} (general immersions).
Subtables of Table \ref{TableXYZ1} can all be obtained from the $U$ method, and also from the $Y$ method for cases UOc, UOb, UUc and UUb. 
   Subtables of genus 0 are also obtained from the $X$ method.
 Subtables of Table \ref{TableXYZ2} are obtained from the $Z$ method. 
 Subtable UO is also obtained from the $X$ method.
       The reader will verify the various identities stated in Theorem \ref{Theo3c} between numbers of
different types of immersions.

As the case of immersions of a circle in the sphere is of particular interest, we
summarize their numbers in the following tables. Recall that in that case (genus 0), there
is no distinction between bi{\colour}able and general immersions.
As the vast majority of immersion diagrams contain a ``simple loop" (aka ``kink", see for example all the diagrams in 
Fig.\ \ref{imm2}),  it is 
suggested to discard them and to count only diagrams without such simple loops. In a next step, one may 
concentrate on diagrams that are {\it irreducible} and {\it indecomposable}, \ie not made disconnected by
removal of a vertex, resp. by cutting two distinct lines\footnote{In the knot theory terminology, diagrams with a
simple loop or reducible are referred as having a ``nugatory" crossing, and indecomposable ones as ``prime".}.

In the  formalism of Sect.\ \ref{Ypmethod}, no simple loop means that neither
$\sigma$ nor $\tau=\sigma\rho$ has a cycle of length~$1$.  Imposing indecomposability and irreducibility requires a more detailed analysis, incorporated in a Mathematica code\footnote{ 
The numbers for OU and UU irreducible and indecomposable immersions  (Table \ref{Table4b}) have  already appeared in the literature, actually up to $n=11$ crossings,  see OEIS sequences A089752 and A007756 and \cite{Kirkman}. After completion of the first version of the present work, we learnt from B\'etr\'ema  that he had been able to compute 
the numbers of OO, OU and UU irreducible and indecomposable immersions  up to $n=14$ \cite{Betrema}.
}.
    \begin{table}[ht]
    \caption{Counting of spherical immersions}
        \centering
 \begin{tabular}{ | l || c|c|c|c|c|c|c|c|c|c|}
       \hline
         \hfill$n$ & 1& 2 & 3 & 4 & 5 & 6&7&8&9&10\\
         \hline
         OO 
      		&  1&3& 9 & 37 & 182 &1143 & {7553} &{54\,559} & 412\,306 &\Red{3\,251\,240} \\ \hline
         UO 
         	 &1&2&6&21&99&588&3829&27\,404& 206\,543& \Red{1\,626\,638} \\ \hline
	 OU 
       		& 1 & 2 & 6 & 21 & {97} & {579} & {3812} &{27\,328}&206\,410&\Red{1\,625\,916} \\ \hline
        UU 
        		&1 & 2 & 6 & 19 & 76 & 376 & 2194 &14\,614 & 106\,421  &\Red{823\,832} \\ \hline
       	UOc 
	&2 & 3 & 12 & 37 & 198 & 1143 & 7658 & 54\,559 & 413\,086 & 3\,251\,240 
		 \\ \hline
          \end{tabular}
          \label{Table4}
                \end{table}                   
    \begin{table}[h]
    \caption{Counting of spherical immersions with no simple loop}
     \centering
 \begin{tabular}{ | l || c|c|c|c|c|c|c|c|c|c|}
       \hline
         \hfill$n$ & 1& 2 & 3 & 4 & 5 & 6&7&8&9&10\\   
                  \hline
         OO immersions 
          	&0  &0& 1& 1 & 2 & 9 &  29&{133} & 594  & \red{2864}  \\ \hline
	UO immersions 
 		& 0&0&1&1&2&6&19 &74&320& \red{1469} \\ \hline	
	OU immersions 
         	& 0 & 0 & 1 & 1 & {2} & {5} & {18} &{70}& 313 &\red{1440}   \\ \hline
         UU immersions 
         	&0&0&1&1 &2&5&16&52& 205 & \red{863} \\ \hline
	bi{\colour}ed UO immersions & 0 & 0 & 2 & 1 & 4 & 9 & 38 & 133 & 640 & \red{2864}
        \\ \hline
          \end{tabular}
          \label{Table4a}
              \end{table}
                   
 
   \begin{table}[h]
   \caption{Counting of irreducible indecomposable spherical immersions.}
       \centering
 \begin{tabular}{ | l || c|c|c|c|c|c|c|c|c|c|}
       \hline
         \hfill$n$ & 1& 2 & 3 & 4 & 5 & 6&7&8&9&10\\
                  \hline
         OO immersions 
     		 & 0 &0& 1 &1  & 2 &  6& 17 &{73} & 290 &{1274}   \\ \hline
         UO immersions 
         	&0&0&1&1&2&4&12 &41&161&\red{658}  \\ \hline
	OU immersions 
     		 & 0 & 0 &  1& 1 & {2} & {3} & {11} &{38}&156&{638}  \\ \hline
         UU immersions 
       		& 0&0  & 1 &1 & 2& 3 &10 &27& 101  &{364} \\ \hline
 bi{\colour}ed UO immersions & 0 & 0 & 2 & 1 & 4 & 6 & 24 & 73 & 322 & {1274}
        \\ \hline
          \end{tabular}
          \label{Table4b}
            \end{table}
            
     \begin{table}[H]
   \caption{Counting of {\bf general} immersions of a circle in a surface of arbitrary genus $\g$, up to stable geotopy.  
   U = Unoriented, O=Oriented.
   \ommit{Subtables OO, UO, OU and UU are obtained from the $Z$ method.
   Subtable UO is also obtained from the $X$ method.}   {\ Figures in red should be confirmed.}
   }
   \centering
{ \small
  \begin{tabular}{ | l || c|c|c|c|c|c|c|c|c|c|}
       \hline
         \hfill$n$ & 1& 2 & 3 & 4 & 5 & 6&7&8&9&10\\
         \hline      
                               \Blue{ OO, total}  & 1 & 4 & 22 & 218 & 3028 & 55540  & {1\,235\,526} &{32\,434\,108} &980\,179\,566& 33\, 522\, 177\,088 \\ \hline
\rowcolor{Apricot} OO, $\g=0$ &  1&{3}& 9 & {37} & 182 &{1143} & {7553} &{54\,559} &  412\,306 &\Red{3\,251\,240 }  \\ \hline
             OO, $\g=1$ &0&1&11&113&1102&11\,114&112\,846 &1\,160\,532&12\,038\,974& \\ \hline
              OO, $\g=2$ &0&0&2&68&1528&28\,947& 491\,767 &7\,798\,139&117\,668\,914&  \\ \hline
               OO, $\g=3$  &0&0&0&0&216&14\,336&554\,096 &16\,354\,210&407\,921\,820& \\ \hline
               OO, $\g=4$   &0&0&0&0&0&0&69\,264&7\,066\,668&397\,094\,352&  \\ \hline
                 OO, $\g=5$   &0&0&0&0&0&0&0&0& 45\,043\,200&  \\ \hline\hline
                            \Blue{UO, total }  &1& 3& 13& 121& 1538& 28\,010& 618\,243  &16\,223\,774&490\,103\,223& 16\,761\,330\,464  \\ \hline
 \rowcolor{Apricot}    UO, $\g=0$   &1&2&6&21&99&588&3829&27\,404& 206\,543& \Red{1\,626\,638}  \\ \hline
             UO, $\g=1$  &0&1&{6}&64 &{559 }&{5656}&56\,528 &581\,511&6\,020\,787& \\ \hline
              UO, $\g=2$  &0&0&1& 36&772&14\,544&246\,092 &3\,900\,698&58\,838\,383&  \\ \hline
               UO, $\g=3$   &0&0&0&0&108& 7222& 277\,114&8\,180\,123&203\,964\,446& \\ \hline
               UO, $\g=4$   &0&0&0&0&0&0& 34\,680 &3\,534\,038&198\,551\,464&  \\ \hline
               UO, $\g=5$   &0&0&0&0&0&0&0&0&22\,521\,600&  \\ \hline\hline
                               \Blue{OU, total }  &1&3&14&120&1556&27\,974&618\,824 &16\,223\,180  &490\,127\,050& 16\,761\,331\,644  \\ \hline
\rowcolor{Apricot}  OU, $\g=0$ & 1 &   {2} & 6 &  {21} & {97} &  {579} & {3812} & {27\,328}&206\,410&  \Red{1\,625\,916}\\ \hline
               OU, $\g=1$ & 0& 1& 6& 62& 559& 5614 &56\,526&580\,860&6\,020\,736& \\ \hline
              OU, $\g=2$ &0&0&2&37&788&14\,558&246\,331 &3\,900\,740&58\,842\,028&  \\ \hline
                OU, $\g=3$  &0&0&0&0&112& 7223 &277\,407&  8\,179\,658& 203\,974\,134& \\ \hline
                OU $\g=4$   &0&0&0&0&0& 0&34\,748 &3\,534\,594&198\,559\,566& \\ \hline
                OU, $\g=5$   &0&0&0&0&0&0&0&0&22\,524\,176&  \\ \hline\hline
                               \Blue{UU, total } &1& 3& 12& 86& 894& 14\,715 & 313\,364& 8\,139\,398&245\,237\,925 & 8\,382\,002\,270 \\ \hline
  \rowcolor{Apricot} UU, $\g=0$  &1 & 2 & 6 & 19 & 76 & 376 & 2194 &14\,614 & 106\,421  &\Red{823\,832}   \\ \hline
            UU, $\g=1$  &0 & 1& 5& 45& 335& 3101& 29\,415&295\,859&3\,031\,458& \\ \hline
             UU, $\g=2$ &0&0&1& 22& 427& 7557& 124\,919&1\,961\,246&29\,479\,410&  \\ \hline
              UU, $\g=3$  &0&0&0&0&56& 3681& 139\,438&4\,098\,975&102\,054\,037&  \\ \hline
              UU, $\g=4$   &0&0&0&0&0&0&17\,398 &1\,768\,704&99\,304\,511&  \\ \hline
               UU, $\g=5$   &0&0&0&0&0&0&0&0&11\,262\,088&  \\ \hline\hline\hline
                \end{tabular}}
                \label{TableXYZ2}
      \end{table}
         

   \begin{table}[H]
   \caption{{\small Counting of {\bf bi{\colour}able} immersions of a circle of arbitrary genus $\g$,  up to stable geotopy.  
    \\ U = Unoriented, O=Oriented, Oc=Oriented bi{\colour}ed, Ob=Oriented bi{\colour}able, etc.\\ Figures in red should be confirmed.
       \ommit{Subtables are all obtained from the $U$ method, and also from the $Y$ method for cases UOc, UOb, UUc and UUb. 
   Subtables of genus 0 are also obtained from the $X$ method.}
        }}
   \centering
{ \small
  \begin{tabular}{ | l || c|c|c|c|c|c|c|c|c|c|}
       \hline
         \hfill$n$ & 1& 2 & 3 & 4 & 5 & 6&7&8&9&10\\
         \hline
          \Blue{OOc  total}&2&6&20&108&776&7772&92\,172 &1\,291\,048&20\,644\,140&  \\ \hline 
            OOc $\g=0$ &2&{6}&18& 74&364 & 2286  &15\,106& {109\,118} &824\,612 & \Red{6\,502\,480} \\ \hline
            OOc $\g=1$&0&0&2&32&340&3780&40\,612 &436\,368 & 4\,675\,012 & \\ \hline   
              OOc $\g=2$&0&0&0&2&72&1630&31\,510&549\,334&8\,883\,620&  \\ \hline 
               OOc $\g=3$ &0&0&0&0&0&76& 4944 & 188\,356&5\,508\,120& \\ \hline  
               OOc $\g=4$ &0&0&0&0&0&0&0 &7872&752\,776 &  \\ \hline  
                \hline                              
         \Blue{OOb  total}&1 &3&10 &54&388&3886& 46\,086 &645\,524 & {\ 10\,322\,070} & \\ \hline 
\rowcolor{Apricot}      OOb $\g=0$ &1&{3}& 9&{37}&182 & {1143 } & 7553  & {54\,559} &412\,306  & \Red{3\,251\,240} \\ \hline
      OOb $\g=1$&0&0& 1&16& 170&1890& 20\,306 &218\,184 &2\,337\,506  & \\ \hline  
       OOb $\g=2$&0&0&0&1& 36&815&15\,755 &274\,667& {\ 4\,441\,810} &  \\ \hline 
        OOb $\g=3$ &0&0&0&0&0&38&2472  & 94\,178&2\,754\,060 & \\ \hline  
        OOb $\g=4$ &0&0&0&0&0&0&0 &3936&376\,388 &  \\ \hline  
                \hline         
         \Blue{UOc  total}&2&3&14&54&420&3886&46\,470 &645\,524&10\,328\,214&  \\ \hline 
     UOc $\g=0$ &2&{3}&12&{37}&198 & {1143 } &{7658} & {54\,559} & {413\,086} & \Red{3\,251\,240} \\ \hline
             UOc $\g=1$&0&0&2&16&186&1890&20\,516 &218\,184 & 2\,340\,106 & \\ \hline   
              UOc $\g=2$&0&0&0&1&36&815&15\,812 &274\,667&4\,443\,518&  \\ \hline 
               UOc $\g=3$ &0&0&0&0&0&38&2484 & 94\,178&2\,754\,988& \\ \hline  
               UOc $\g=4$ &0&0&0&0&0&0&0 &3936&376\,516&  \\ \hline  
                \hline 
                  \Blue{UOb, total }  &1&2&7&30&210&1973& 23\,235 &323\,182&5\,164\,107& \\ \hline
\rowcolor{Apricot}    UOb, $\g=0$   &1&2&6&21&99&588&3829&27\,404& 206\,543& \Red{1\,626\,638}  \\ \hline
           UOb, $\g=1$  &0&0&{1}&{8}&{93}&{945}&10\,258 &109\,092&1\,170\,053& \\ \hline
            UOb, $\g=2$  &0&0&0&1&18&421&7906 &137\,585&2\,221\,759&  \\ \hline
            UOb, $\g=3$   &0&0&0&0&0&19&1242 &47\,089&1\,377\,494&  \\ \hline
             UOb, $\g=4$   &0&0&0&0&0&0&0 &2012&188\,258&  \\ \hline\hline
              \Blue{OUc  total}&1&4&10&60&388&3920&46\,086 &645\,928&  {\ 10\,322\,070} &  \\ \hline 
              OUc , $g=0$ &1 &4&9&42&182&1158& 7553 &54\,656 & {412\,306}&\Red{3\,251\,832}\\ \hline
              OUc , $g=1$ & 0&0&1&16&170&1890&20\,306&218\,184&2\,337\,506&\\ \hline
              OUc , $g=2$ & 0&0&0&2&36&834&15\,755&274\,922&  {\ 4\,441\,810} &\\ \hline
              OUc , $g=3$ & 0&0&0&0&0 &38&2472&94\,178&2\,754\,060&\\ \hline
              OUc , $g=4$ & 0&0&0&0&0&0&0&3988&376\,388&\\ \hline
              \hline
              \Blue{OUb  total}&1&2&7&30&210&1960&23\,276 &322\,964&{\ 5\,165\,732}&  \\ \hline 
\rowcolor{Apricot}  OUb , $g=0$ & 1&2&6&21&97&579& 3812& 27\,328&206\,410&\Red{1\,625\,916}\\ \hline
              OUb , $g=1$ & 0&0&1&8&93&945&10\,256&109\,092&1\,170\,002&\\ \hline
              OUb , $g=2$ & 0&0&0&1&20&417&7948&137\,461&{\ 2\,222\,562}&\\ \hline
              OUb , $g=3$ & 0&0&0&0&0&19&1260&47\,089&1\,378\,256&\\ \hline
              OUb , $g=4$ &0 &0&0&0&0&0&0& 1994& 188\,502&\\ \hline
              \hline
                             \Blue{UUc, total } &1&2&7&30&210&1960&23\,235&322\,964&5\,164\,107&\\ \hline
          UUc, $\g=0$  &1 & {2} & 6 & {21} &  99 & {579} & 3829  & {27\,328} & 206\,543& \Red{1\,625\,916} \\ \hline
          UUc, $\g=1$ &0& 0& 1& 8&93 & 945&  10\,258 &109\,092& 1\,170\,053 & \\ \hline
            UUc, $\g=2$ &0& 0& 0& 1&18 & 417& 7906&137\,461&2\,221\,759& \\ \hline
             UUc, $\g=3$  &0&0&0&0&0&19& 1242&47\,089&1\,377\,494 &  \\ \hline
             UUc, $\g=4$   &0&0&0&0&0&0&0 & 1994&188\,258&  \\ \hline
                          \hline
              \Blue{UUb, total } &1&2&7&26&152&1168&12\,548&165\,742& 2\,605\,526&\\ \hline
\rowcolor{Apricot}     UUb, $\g=0$  &1 & 2 & 6 & 19 & 76 & 376 & 2194 &14\,614 & 106\,421  &\Red{823\,832}   \\ \hline
          UUb, $\g=1$ &0& 0& 1& 6& 63& 539& 5508  &56\,067&592\,457& \\ \hline
            UUb, $\g=2$ &0& 0& 0& 1& 13& 242& 4183 &70\,118&1\,119\,180& \\ \hline
             UUb, $\g=3$  &0&0&0&0&0&11&663&23\,907&692\,749 &  \\ \hline
             UUb, $\g=4$   &0&0&0&0&0&0&0 &1036 &94\,719&  \\ \hline
                       \hline
              \end{tabular}}
                \label{TableXYZ1}
      \end{table}

      Since our method is constructive and not only enumerative, we have not only the number of
      orbits or of immersions, but also their list, encoded in the various ways explained in the previous sections
      (the full listing up to $n=10$ is available on request). 
           This also enables us to {\it draw} images of these immersions.
      See the UU immersions for $n=8$ and $n=9$ in Fig.\ \ref{ImmUU8} and \ref{ImmUU9v3a}-\ref{ImmUU9v3b}
      respectively. These figures have been prepared {using {\tt DrawPD}, a routine to draw planar diagrams, 
      within the Mathematica package ``KnotTheory"
      written by Redelmeier \cite{EmilyR}} (the distinction between under- and over-crossings is irrelevant 
      in the current discussion).

 \begin{figure}[htbp]
 \centering
\includegraphics[width=14pc]{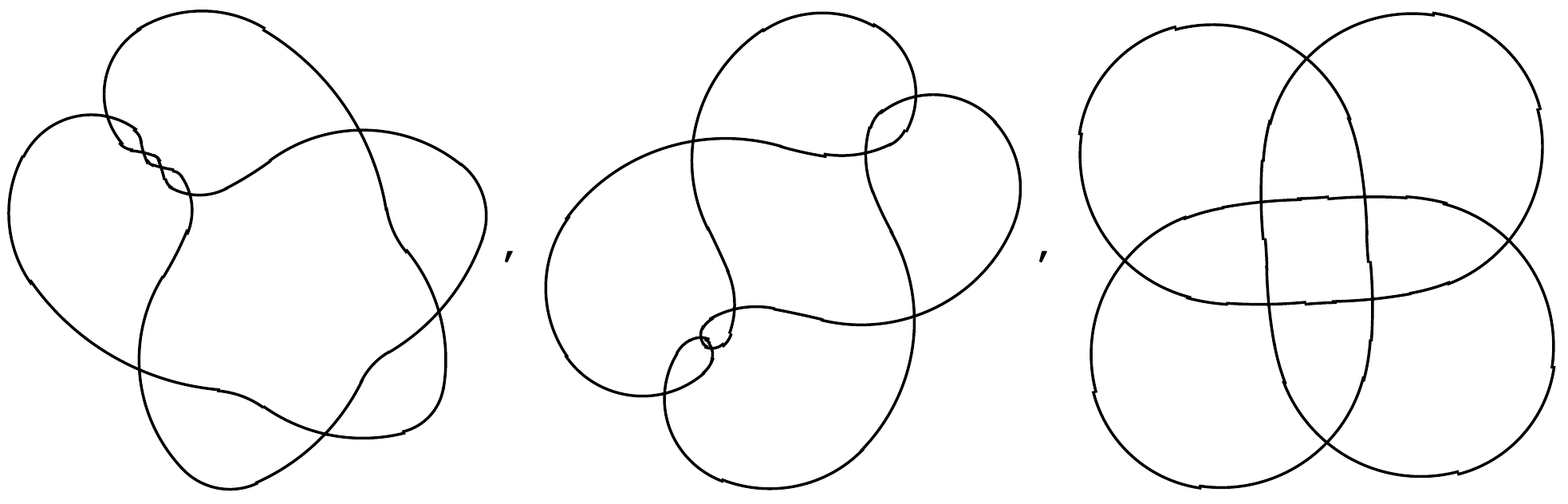}\raisebox{30pt}{;} 
\includegraphics[width=13.5pc]{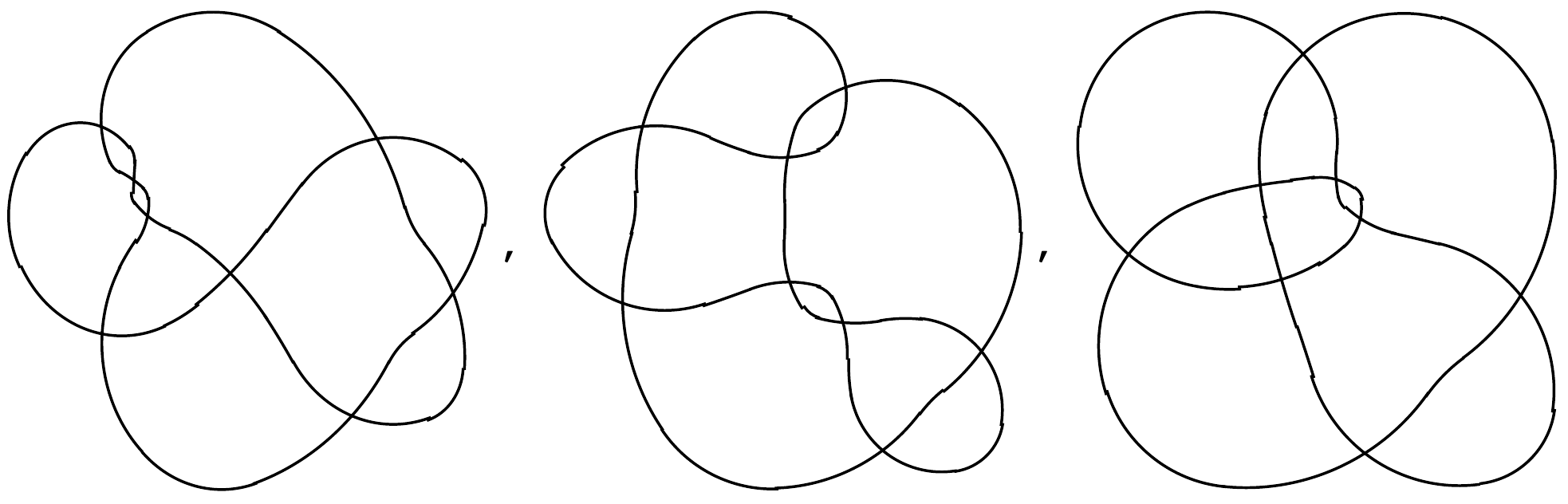}\raisebox{30pt}{;}
\includegraphics[width=25pc]{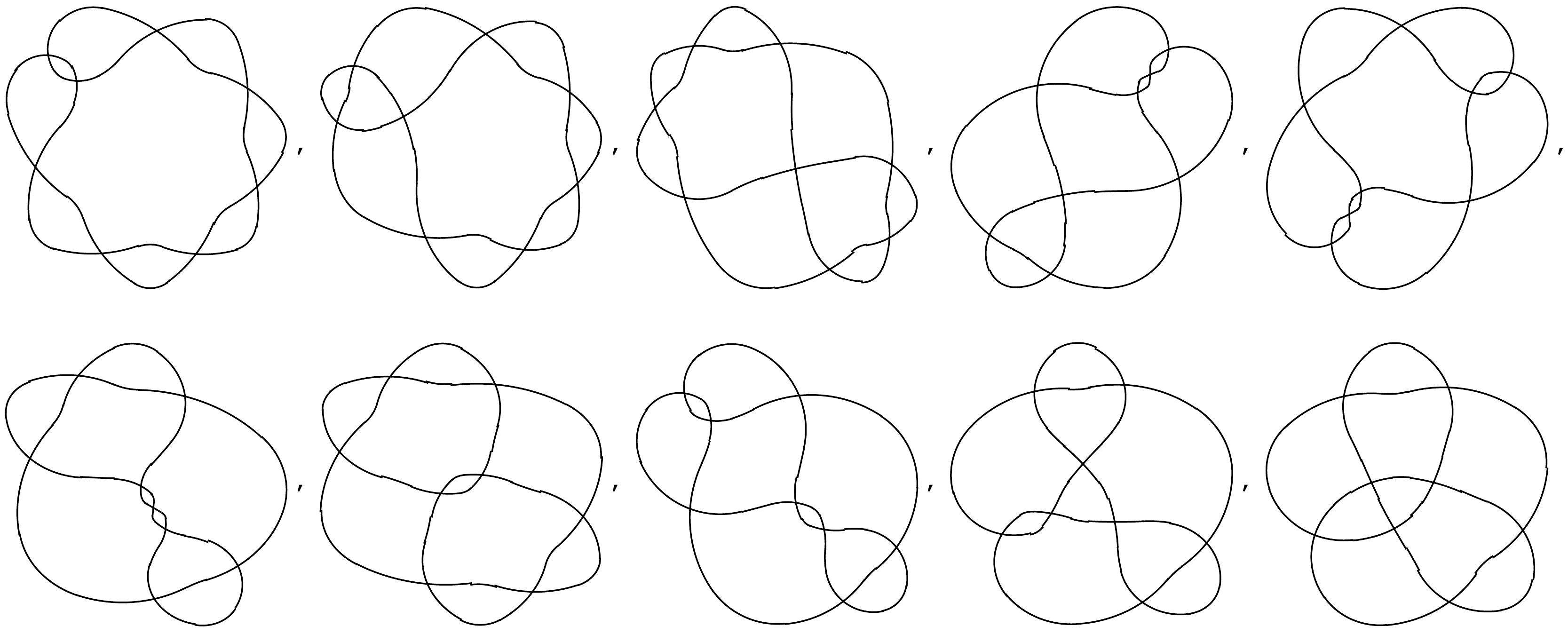}\raisebox{30pt}{;}
\includegraphics[width=20pc]{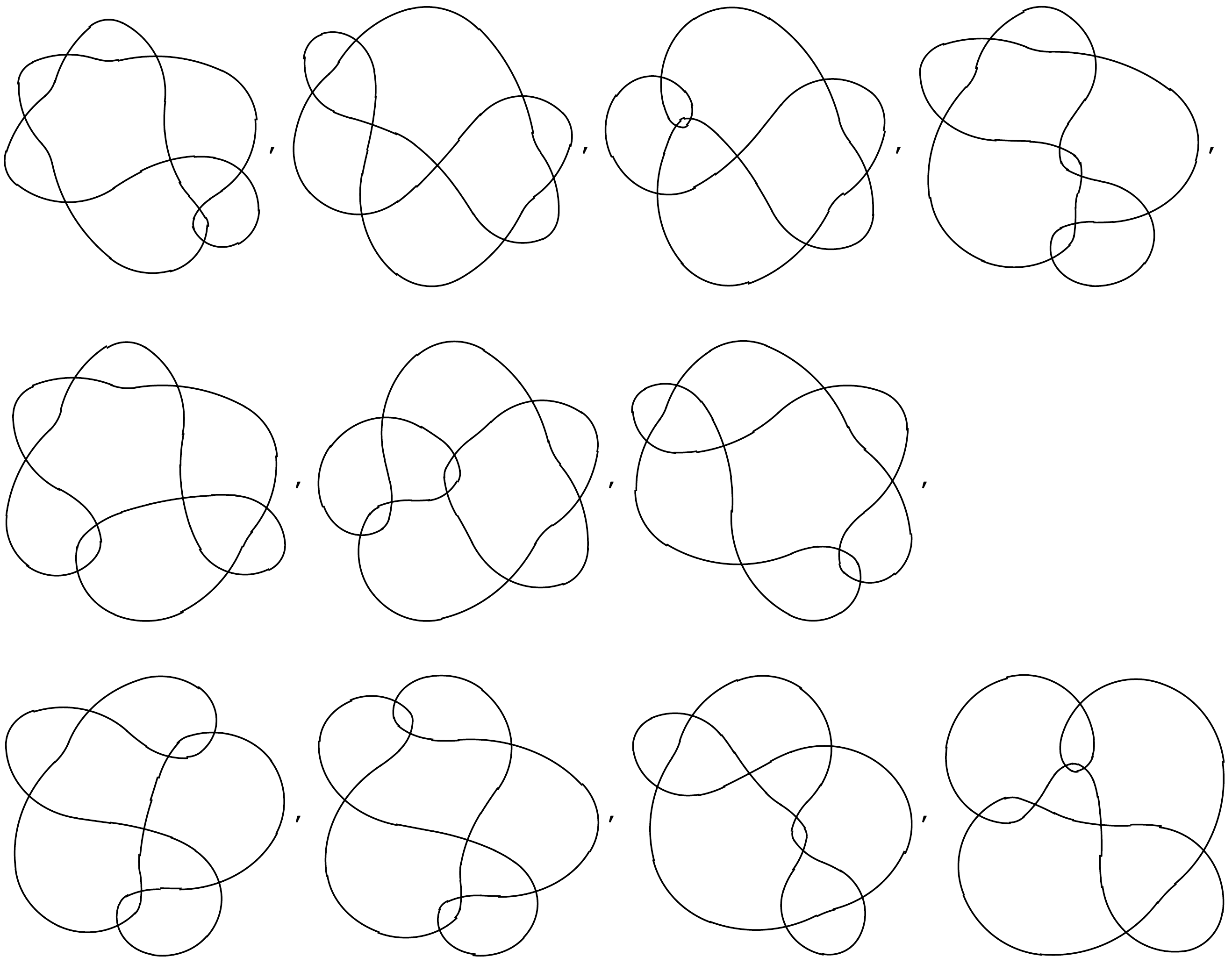}
\caption{\label{ImmUU8} The 27 indecomposable irreducible immersions of an unoriented circle in an unoriented sphere with 
$n=8$ double points. They may also serve as {\colour}ed and/or oriented immersions: the first three are invariant both under 
\duality ({\colour} swap or undercrossing $\leftrightarrow$ overcrossing) and mirror symmetry;
the next three are \duality invariant but have a mirror partner; 
the next 10 have identical swap,  mirror and orientation-reversal images ; and the last 11 
give rise to four images under \duality and mirror symmetry. 
In the notations of Sect.\ \ref{UOUUimm}, 
 the values of $x_{sm},y_{sm},z_{sm},v_{sm},w_{sm}$ restricted to this set of indecomposable irreducible immersions 
read $(3,3,0,10,11)$. (For all, the effect of orientation-reversal is the same as swapping.) We thus have 
3+3+10+11=27 immersions of type UU ; 3+6+10+22= 41 of type UO; 3+3+10+22=38 of type OU; 
and 3+6+20+44=73 OO or bi{\colour}ed UO immersions.}
\end{figure}

\begin{figure}[htbp]
 \centering
\includegraphics[height=18mm]{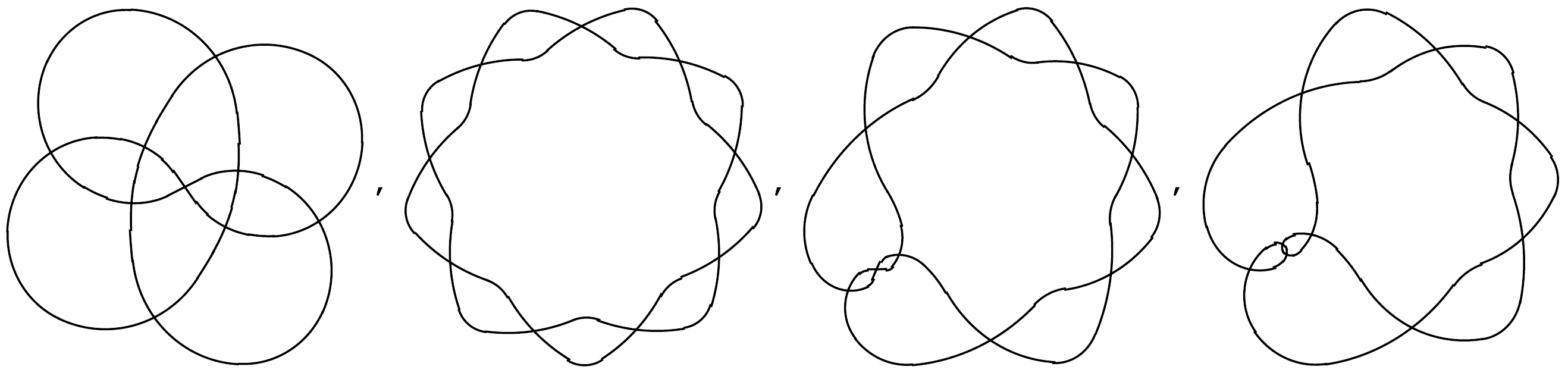}
\includegraphics[height=18mm]{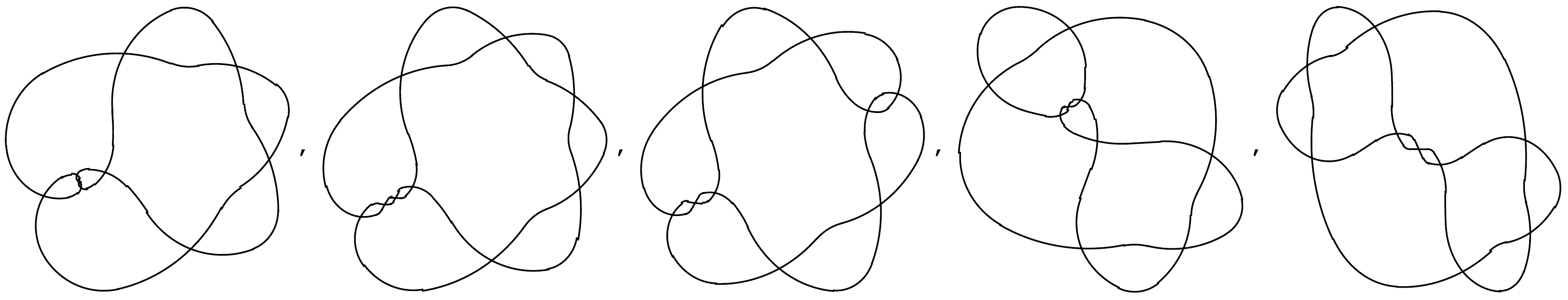}
\includegraphics[height=18mm]{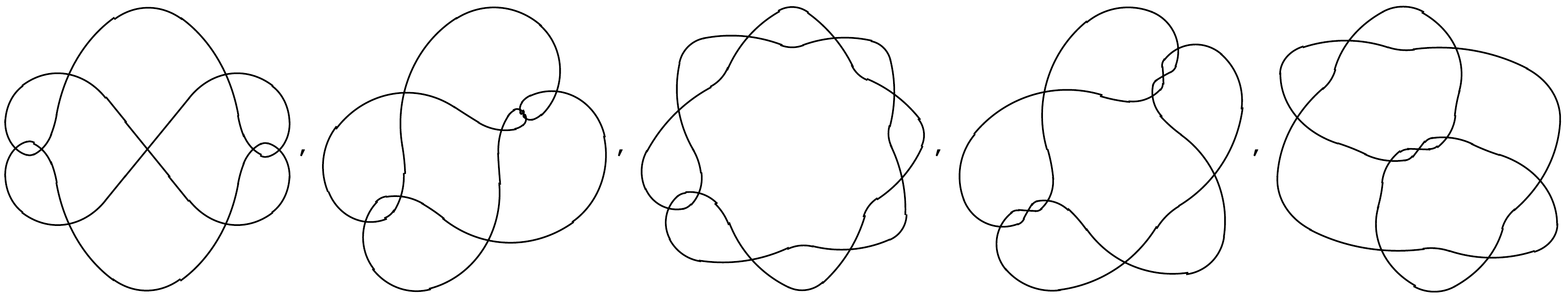}\ \raisebox{30pt}{;} 
\includegraphics[height=40mm]{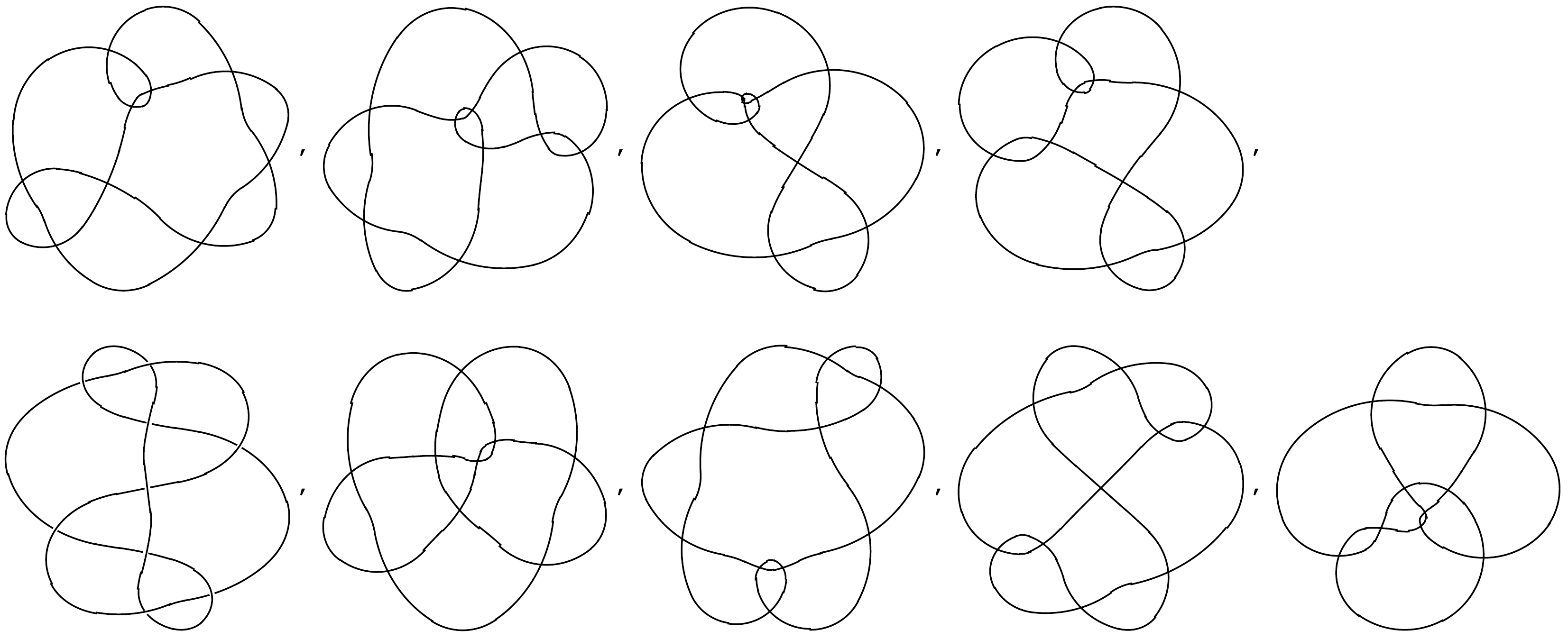}
\ \raisebox{30pt}{;}
\includegraphics[height=18mm]{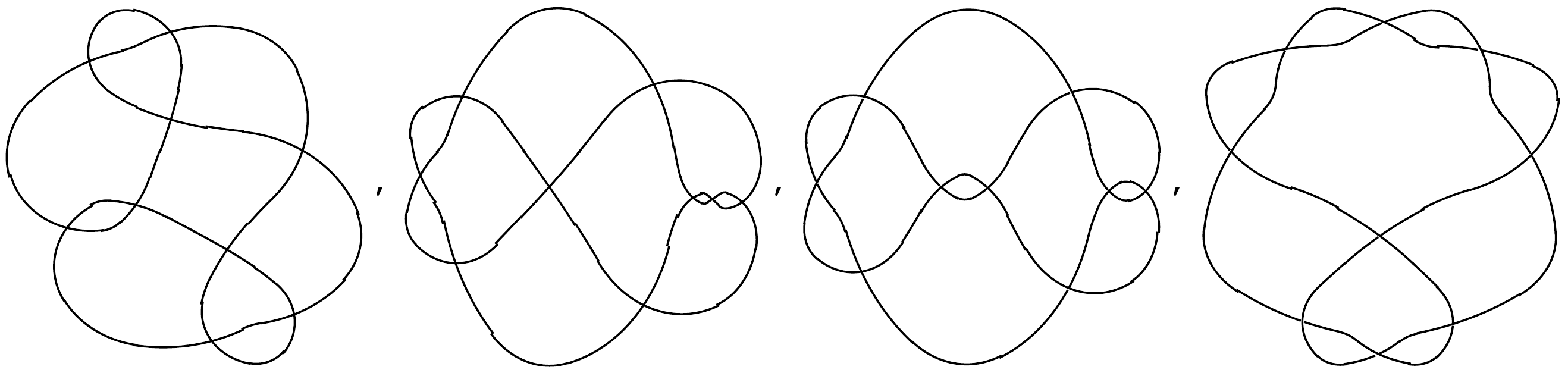}\ \raisebox{30pt}{;}
\includegraphics[height=18mm]{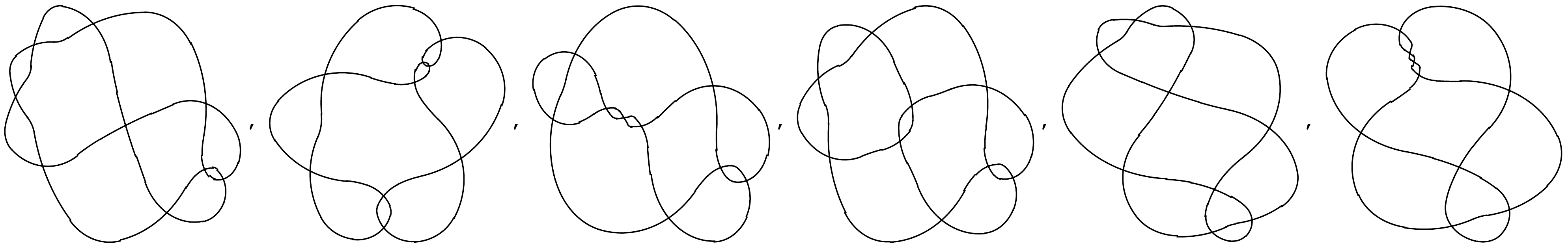}
\includegraphics[height=18mm]{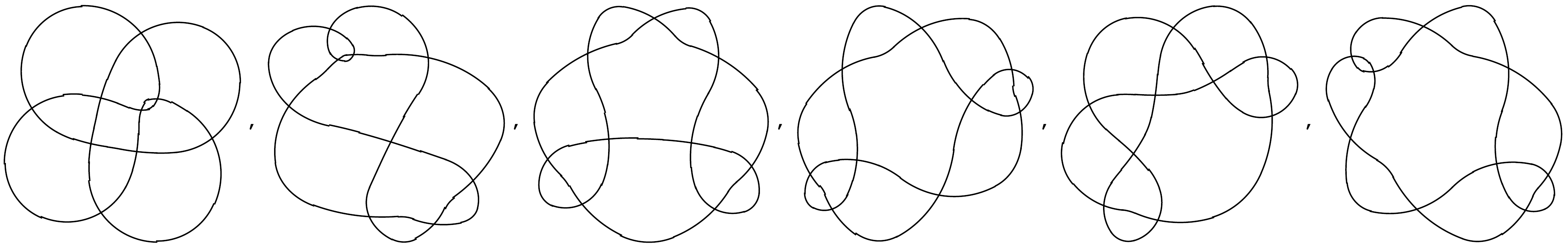}\
\includegraphics[height=18mm]{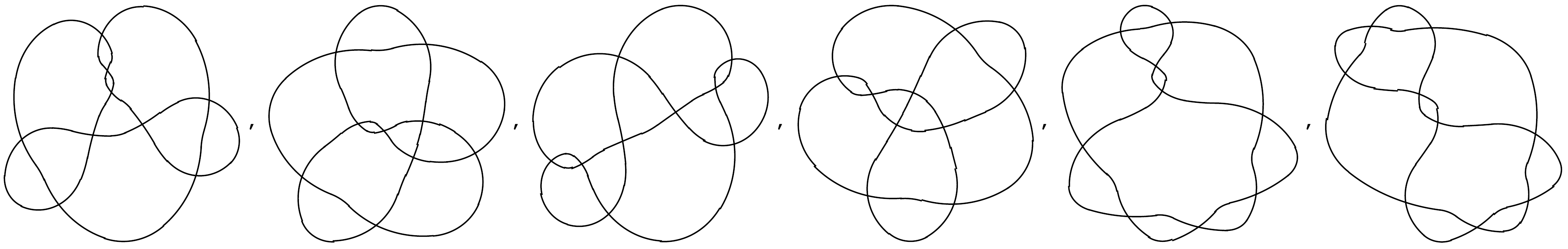}\
\includegraphics[height=18mm]{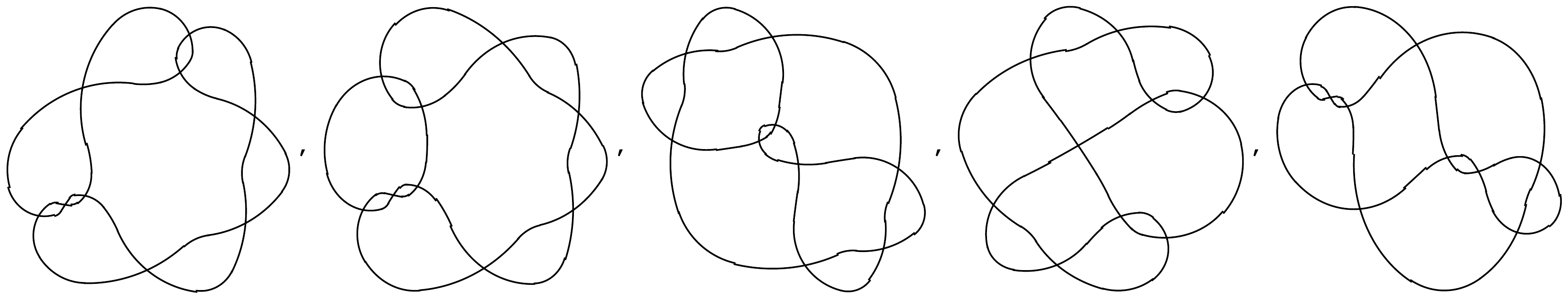}\
\ \raisebox{30pt}{;}
\caption{\label{ImmUU9v3a} The 101 indecomposable irreducible immersions of an unoriented circle in an unoriented sphere with 
$n=9$ double points: on that figure, the $x_{rm}=14$ immersions such that $\sigma\sim\sigma_m\sim\sigma_r$; the $y_{rm}=9$   ones such that 
$\sigma\sim\sigma_r\nsim\sigma_m$;  the $z_{rm}=4$ ones such that $\sigma\sim\sigma_m\nsim\sigma_r$;
the $v_{rm}=23$ ones such that $\sigma\sim\sigma_{rm}\nsim\sigma_r$; next figure, the $w_{rm}=51$ with no symmetry.}  
\end{figure}

\begin{figure}[htbp]
 \centering
\includegraphics[height=18mm]{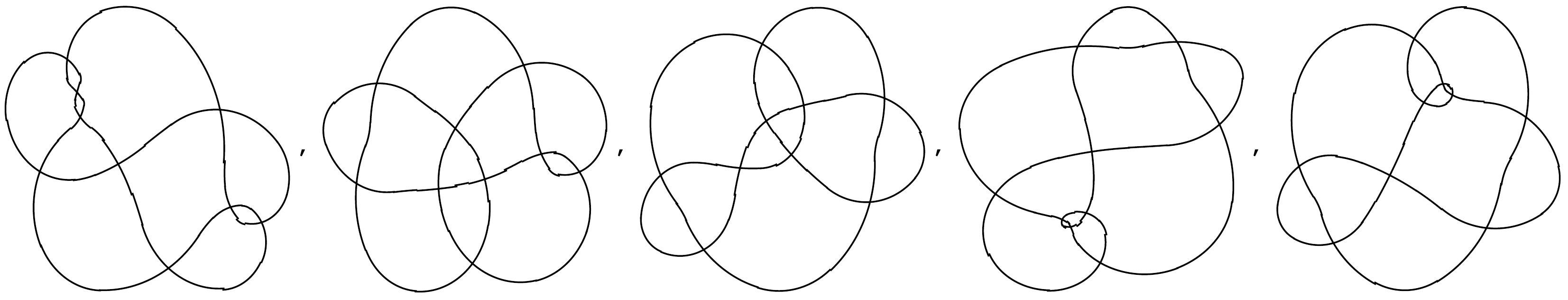}
\includegraphics[height=18mm]{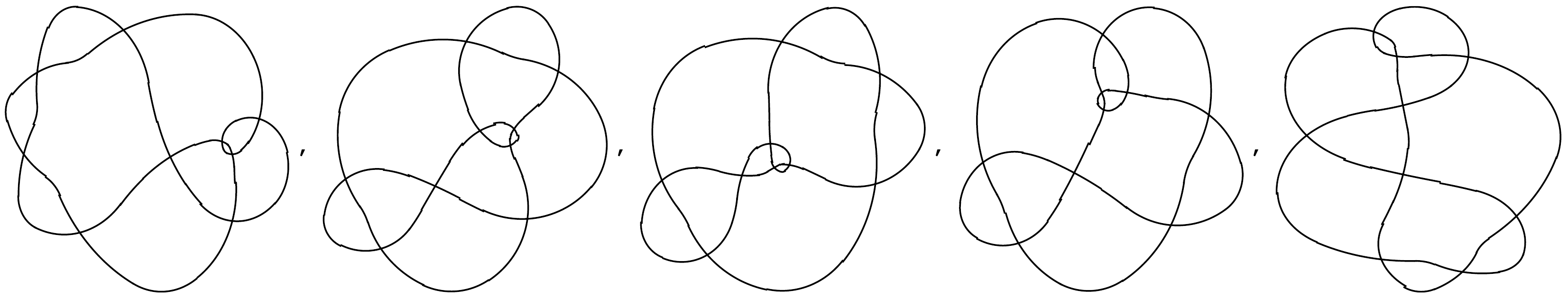}
\includegraphics[height=18mm]{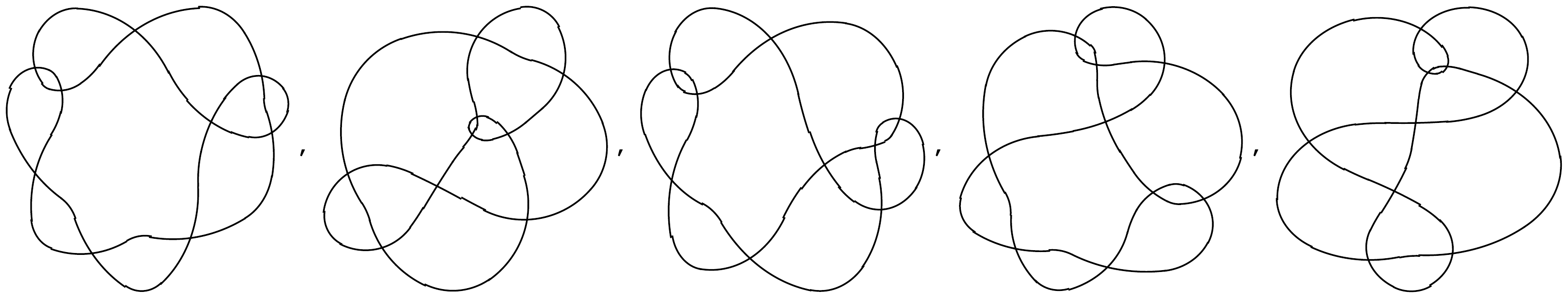}
\includegraphics[height=18mm]{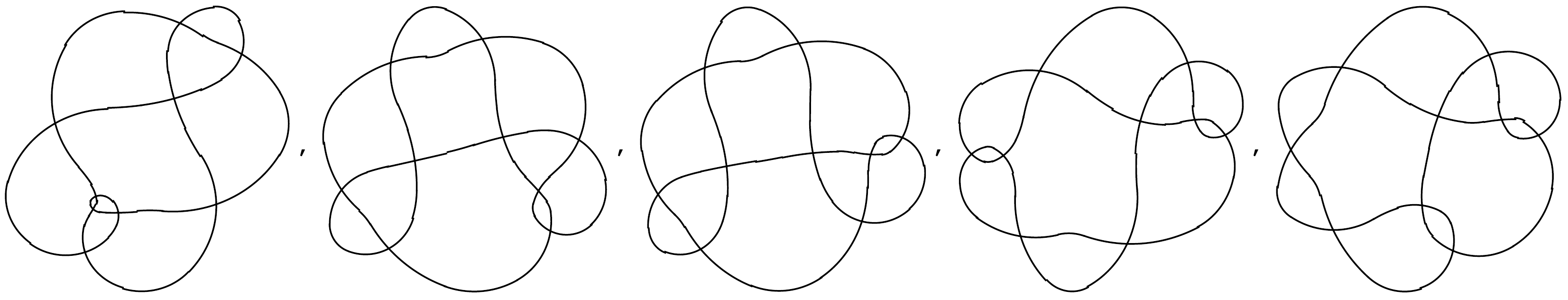}\ \raisebox{30pt}{;}
\includegraphics[height=18mm]{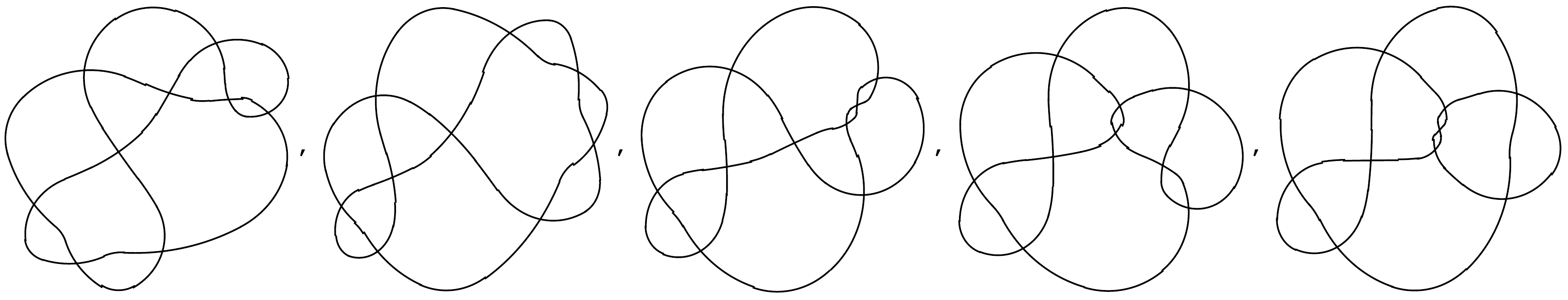}
\includegraphics[height=18mm]{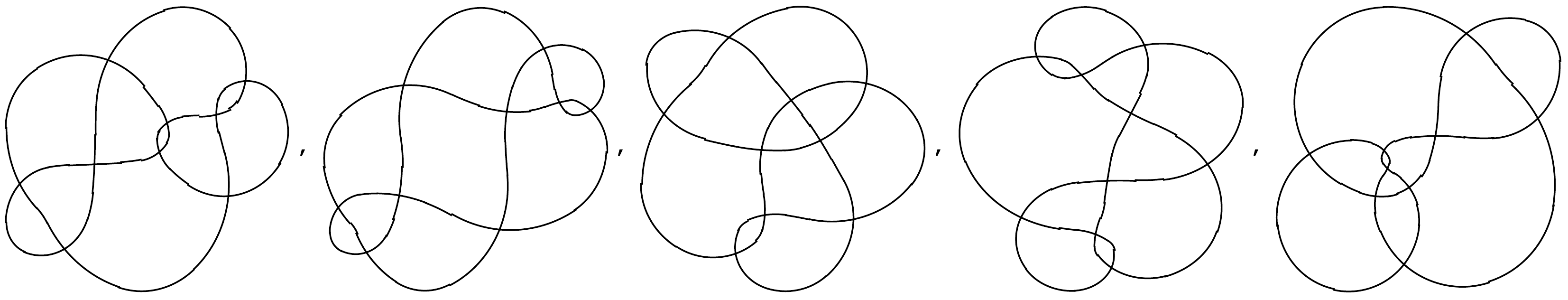}
\includegraphics[height=18mm]{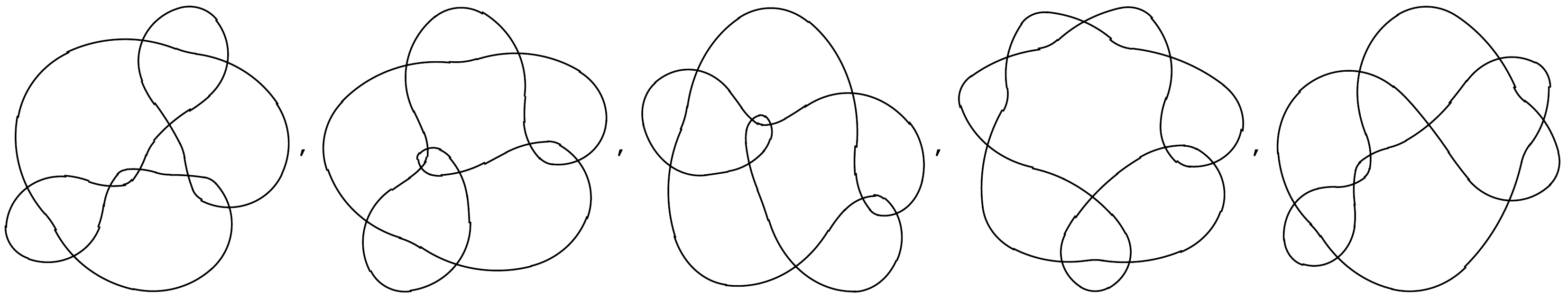}
\includegraphics[height=18mm]{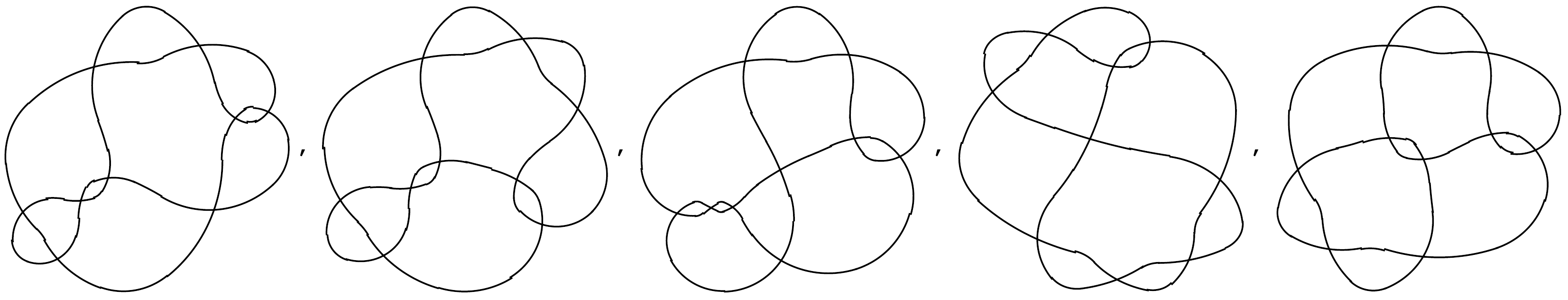}
\includegraphics[height=18mm]{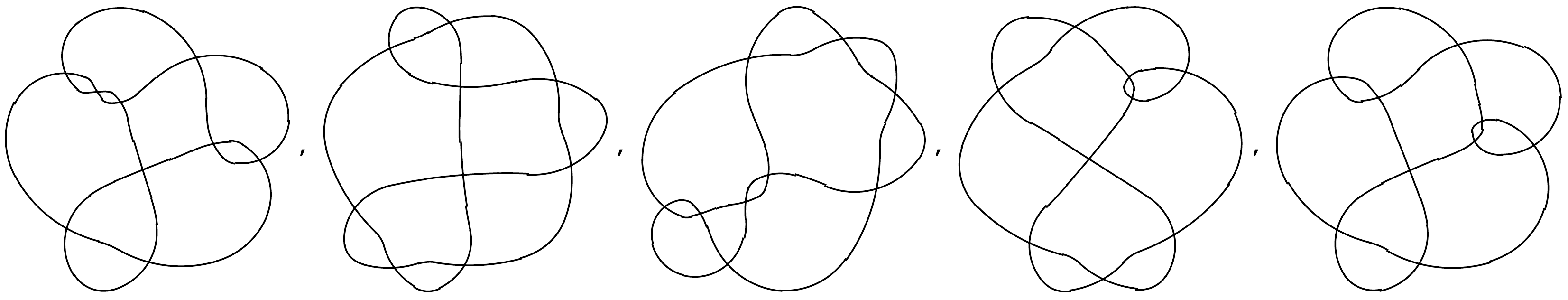}
\includegraphics[height=18mm]{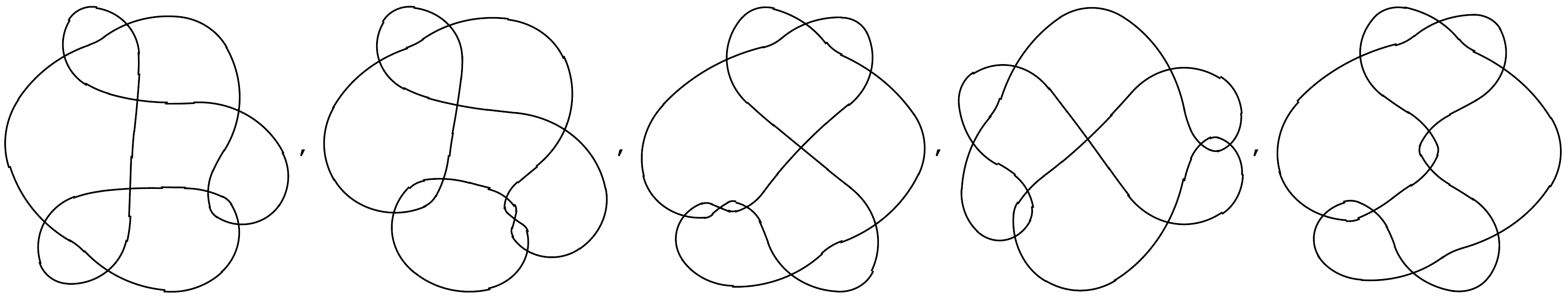}\ \raisebox{-10pt}{\includegraphics[height=28mm]{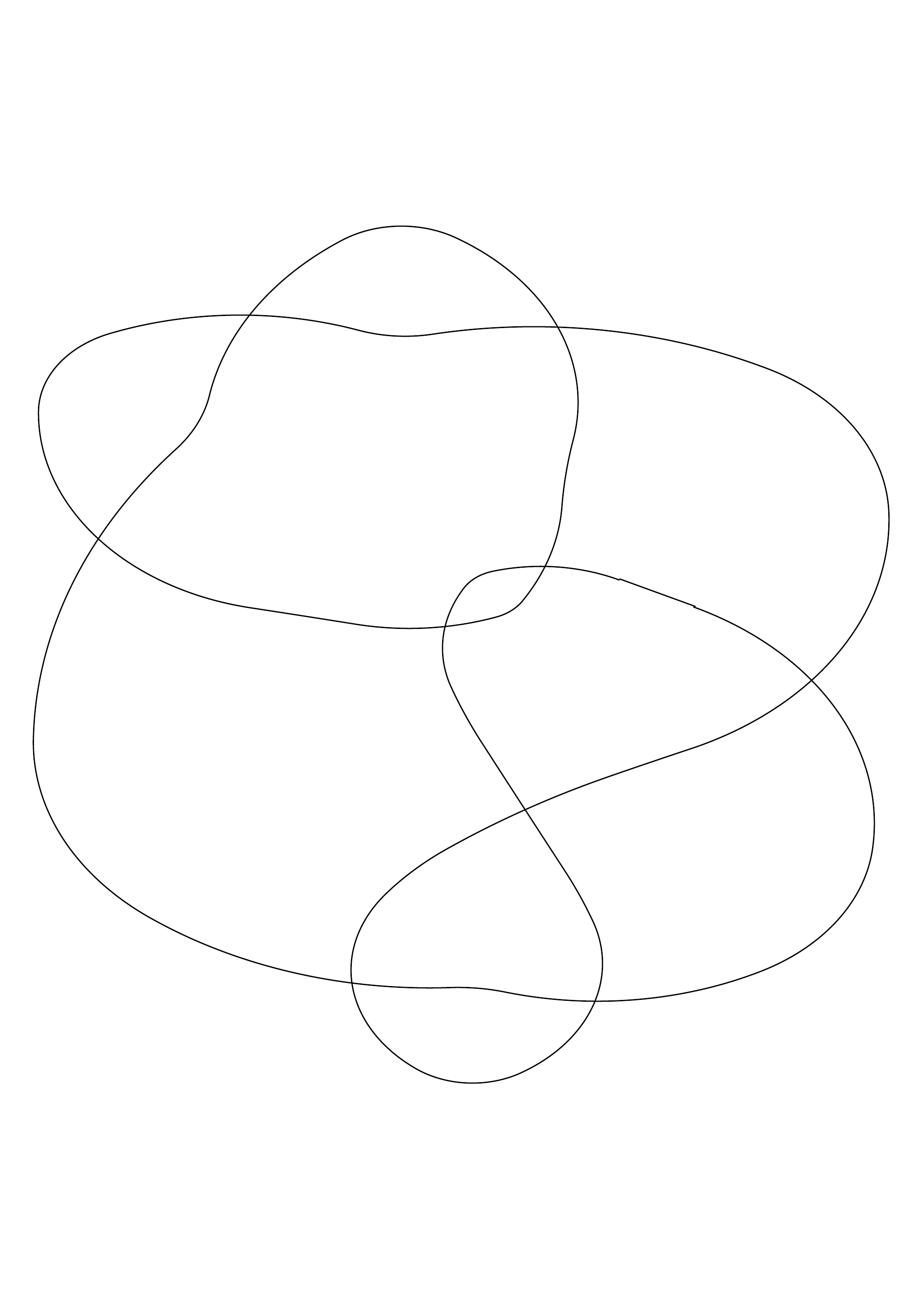}}
\ \raisebox{30pt}{.}
\caption{\label{ImmUU9v3b} The 101 indecomposable irreducible immersions of an unoriented circle in an unoriented sphere with 
$n=9$ double points (cont'd): the 51 immersions with no symmetry.}  
\end{figure}

\vfill \eject
 To summarize what we have achieved in this paper, 
  \vskip 0.5cm
 
 \noindent
 $\bullet$ we have emphasized the role of bi{\colour}ability and made explicit  12 different types of immersions that may be
 considered;\\
  $\bullet$ we have extended  existing series of numbers of spherical immersions to $n=10$ crossings;\\
 $\bullet$  we have given tables of immersions (given here for $n=8$ and 9 for irreducible indecomposable immersions, see 
 Fig.\ \ref{ImmUU8}, 
 \ref{ImmUU9v3a}, \ref{ImmUU9v3b}, 
 but they are available on request for the other known cases);\\
  $\bullet$ we have extended to non zero genus  the counting of immersions and provided their cardinals up to $n=9$ or 10 
 crossings;\\
 $\bullet$  we have discovered and proved novel relations between numbers of immersions of different types, see Theorem 
 \ref{Theo3c}. 

\def\mapover#1{\buildrel #1 \over{\mapsto}}
\newpage
\section*{Appendix A : Details about the algorithms}
\label{AppA}
 All the tables found in the present paper, using the methods and algorithms discussed in the different sections, have been generated using computer programs written both in Mathematica \cite{Mathematica} and Magma \cite{Magma}.
Magma implements fast algorithms to determine explicitly the conjugates of a chosen group element with respect to some subgroup of the permutation group,  and to test whether  two elements are conjugated, this allows one to determine orbit representatives.
 Magma can also  determine very quickly the centralizer of a group element in a given subgroup of a permutation group; this feature is used in many places in our calculations,  for instance  when we determine the orbit sizes. 
 \ommit{It can also be used as a check: in some cases,  for high values of $n$,  {rather than considering} all the permutations belonging to a given subset we may use randomly generated permutations;  determining the order of their centralizer with respect to an appropriate group of reparametrization allows us to determine the corresponding orbit sizes and to check that we have generated enough orbit representatives,  since the given subset of interest is often of known cardinality.}
 We implemented in Magma the Frobenius formula (\ref{frobenius}) that only uses the cardinality of the absolute or relative conjugacy classes (\ie relative to the whole permutation group,  or relative to specific subgroups);  
 as the determination of the size of such conjugacy classes, together with representatives elements for each class, is very fast in Magma, our algorithm turns out to be much faster than the available commands giving the size of double cosets.
    
In  Sect.\ \ref{X},  we work in $S_{4n}$ to study immersions with $n$ crossings, and the number of permutations to be handled becomes unfortunately very high, even for modern processors; it becomes time and memory consuming to go beyond $n=6$ by this technique.   

In  Sect.\ \ref{Y}, for low values of $n$ (up to 6) a direct enumeration of all elements of $Y^{\prime}$ and an explicit construction of their orbits, together with the different kinds of immersions, was possible both in Mathematica 
and Magma. Initially, our first method, for larger values of $n$, up to 9, was to perform, using Mathematica,  a random sampling of $Y^{\prime}$ followed by the determination of a typical representative of each orbit of $Y^{\prime}$,  therefore giving a list of orbits. The sampling was continued until the results stabilize and the procedure was finally certified by checking the sum rule $\sum_{{\rm orbits }\, o}\, \ell_o=\card{ Y'}=2^{2n-1}(n-1)! n!$, where the length $ \ell_o$  of each orbit $o$ was determined independently by use of Magma (determination of the order of stabilizers of orbits points). 

Replacing the sets $Y$ (actually $Y^{\prime}$) by the $\CC_\rho$ left coset $U$, and the adjoint action of $\CC_\rho$ by the action of its dihedral subgroup or  of the  appropriate cyclic subgroup of the latter, 
 allowed us, at a later stage (see Sect. \ref{gaugefixing}), to recover all these results, including the determination of representatives for all orbits of all kinds of immersions, up to $n=9$, by a direct enumeration of all elements of $U$, using Magma. A comparison between the lengths of orbits obtained for these different group actions will be done below, together with a particular example. 
 For $n=10$ we could not determine representatives for the orbits of $Y^{\prime}$ or $U$.
We used again the same random sampling method for genus 0 until the results stabilize, but, unfortunately as we have no {\it a priori} knowledge of $\card{ Y''}$, we had no way to check the correctness of the result by using a sum rule.

Finally the orbits for the adjoint action of (a particular subgroup) $S_n$ on the cyclic permutations of $S_{2n}$, leading to the number of immersions of type OO, OU and UO, with no constraint of bicolourability (the ``Z method'' of Sect.\ \ref{Zpmethod}), were obtained both using Mathematica (random sampling) and Magma (full enumeration of orbits), up to $n = 9$ for all genera, and {$n=10$ in genus~$0$}.
Remember that representative elements of orbits are needed in order to consider the effects of the five types of symmetries that match  Arnold's classification. 
The number of orbits in $Z^\prime = [2n]$ itself (Table \ref{TableZp}), aka the total number of immersions of OO type (summing over all genera), and its variants of types UO, OU, UU,  
were calculated using both Magma and a Frobenius formula on double cosets,  see also our comments in Sect.\ \ref{dblecosZ}.
The number of immersions of type OO, OU, UO and UU were then quickly recovered by using double cosets and Prop.\ \ref{doublecosetmeth}; this latter method gives however slighly less information than the former (full enumeration of orbits)  since it does not determine the five parameters $x_{rm}, y_{rm}, z_{rm}, v_{rm}, w_{rm}$ describing symmetries of orbit types.

\paragraph{More on the action of $\CC_\rho$ and $D_n$ on $U$.}
The restriction of the action of $\CC_\rho$ to its subgroup $D_n$ defines  an action of the latter for which the set $U$ is stable: the points of intersection between $U$ and a given orbit of  $\CC_\rho$ define an orbit of $D_n$, of length 
$\vert D_n \vert / k$ = $2n/k$, where $x \in U$ and  $k=|C(D_n,x)|$.  On the other hand, the orbit of $\CC_\rho$ going through $x$ has length $\vert \CC_\rho \vert /k^\prime$ where $k^\prime=|C(\CC_\rho,x)|$. \\
We shall prove below that $k^\prime = k$ but let us take this property for granted at the moment. 
As the discussion can be carried out independently for the different genera, let us call $\mu(k)$,  the number of elements of the left coset $U$, of fixed genus  (call $U_g$ this subset), whose centralizer in $D_n$ has order $k$. 
These $\mu(k)$ elements can be gathered into $\mu(k)/ (2n/k)$  orbits of $D_n$, but each orbit of $D_n$ determines one orbit of $\CC_\rho$, so these $\mu(k)$ elements of $U$ determine $\lambda(k) = \tfrac{k}{2n} \mu(k)$ orbits of $\CC_\rho$, of length $\vert\CC_\rho\vert/k$. This discussion is summarized in the following proposition:

\begin{myprop}\label{prop4} 
For any genus $g$, and for all $x$ in $U_g$, the following two centralizer subgroups are equal: $C(\CC_\rho,x) = C(D_n,x)$. 
Denoting by $k$ their common order, we call $\mu(k) = \#\{x \in U_g \; : \; \vert C(D_n,x) \vert = k \}$. 
The number of orbits of length $2n/k$, for the adjoint action of $D_n$ on $U_g$, is equal to  $\lambda(k) = \tfrac{k}{2n} \, \mu(k)$.
With the notations of the text, $\lambda(k)$ is also the number of orbits  of length $\vert\CC_\rho\vert/k$, for the adjoint action of $\CC_\rho$ on the set $Y^\prime_g$.
\end{myprop}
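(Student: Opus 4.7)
The plan has three steps. First I would prove the equality of centralizers, then deduce the orbit count for the $D_n$-action on $U_g$, and finally establish a bijection between $D_n$-orbits in $U_g$ and $\CC_\rho$-orbits in $Y'_g$ that matches the lengths as claimed.

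\textbf{Step 1: $C(\CC_\rho,x)=C(D_n,x)$ for $x\in U_g$.} Since $D_n\subset\CC_\rho$, the inclusion $C(D_n,x)\subseteq C(\CC_\rho,x)$ is immediate. For the reverse inclusion, take $\xi\in\CC_\rho$ with $\xi x\xi^{-1}=x$. By definition of $U$, the element $x$ satisfies $x\rho x^{-1}\rho=\alpha_0$, i.e. $x\rho x^{-1}=\alpha_0\rho$. Conjugating by $\xi$ gives $\xi x\rho x^{-1}\xi^{-1}=\xi\alpha_0\rho\xi^{-1}$, and using $\xi x\xi^{-1}=x$ together with $\xi\rho\xi^{-1}=\rho$ (because $\xi\in\CC_\rho$), we obtain $\alpha_0\rho=\xi\alpha_0\xi^{-1}\rho$, hence $\xi\in\CC_{\alpha_0}$. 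Thus $\xi\in\CC_\rho\cap\CC_{\alpha_0}=D_n$, as recalled in Sect.~\ref{gaugefixing}(c), and so $\xi\in C(D_n,x)$. This proves the equality, and we may call $k$ the common order.

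\textbf{Step 2: $D_n$-orbits in $U_g$.} By the orbit--stabilizer theorem applied to the adjoint action of $D_n$ on the (stable) set $U_g$, the orbit through $x$ has length $|D_n|/|C(D_n,x)|=2n/k$. Elements of $U_g$ with a given centralizer order $k$ are thus partitioned into orbits of common length $2n/k$; there are $\lambda(k)=\mu(k)\,k/(2n)$ of them.

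\textbf{Step 3: Bijection with $\CC_\rho$-orbits in $Y'_g$.} I would use the same computation as in Step 1 to show that each $\CC_\rho$-orbit in $Y'_g$ meets $U_g$ in exactly one $D_n$-orbit. Indeed, for $x\in U_g$ and $\gamma\in\CC_\rho$, the conjugate $\gamma x\gamma^{-1}$ lies in $U$ iff $\gamma x\rho x^{-1}\gamma^{-1}=\alpha_0\rho$, which, after using $\gamma\rho\gamma^{-1}=\rho$, is equivalent to $\gamma\alpha_0\gamma^{-1}=\alpha_0$, i.e.\ $\gamma\in\CC_\rho\cap\CC_{\alpha_0}=D_n$. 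Consequently $(\CC_\rho\cdot x)\cap U_g=D_n\cdot x$, so the assignment $D_n\cdot x\mapsto\CC_\rho\cdot x$ is a well-defined bijection between $D_n$-orbits in $U_g$ and $\CC_\rho$-orbits in $Y'_g$ (surjectivity follows because $U$ meets every $\CC_\rho$-orbit of $Y'$, as established in Sect.~\ref{gaugefixing}). Under this bijection, by Step~1 the $\CC_\rho$-orbit of $x$ has length $|\CC_\rho|/|C(\CC_\rho,x)|=|\CC_\rho|/k$, matching a $D_n$-orbit of length $2n/k$. Therefore the number of $\CC_\rho$-orbits in $Y'_g$ of length $|\CC_\rho|/k$ equals $\lambda(k)$, which completes the proof.

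The conceptually delicate point is ensuring in Step 3 that ``one $\CC_\rho$-orbit meets $U_g$ in exactly one $D_n$-orbit''; but once Step~1 is in hand, this reduces to the identical algebraic manipulation, so there is no genuine obstacle beyond invoking the description $\CC_\rho\cap\CC_{\alpha_0}=D_n$ already recorded in the paper.
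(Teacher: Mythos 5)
Your proof is correct and follows essentially the same route as the paper's: the crux in both is identifying the stabilizer as an intersection of centralizers equal to $D_n$ (you land in $\CC_\rho\cap\CC_{\alpha_0}$, the paper writes $x=\beta z$ and lands in $\CC_\rho\cap\CC_\rho^{\beta}$ --- the same subgroup, and both identifications are recorded in Sect.~4.2). A minor bonus of your write-up is that Step 3 explicitly verifies that each $\CC_\rho$-orbit of $Y'_g$ meets $U_g$ in exactly one $D_n$-orbit, a fact the paper asserts in the discussion preceding the proposition without spelling out the computation.
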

\begin{proof}
It remains to prove that  $C(\CC_\rho,x) = C(D_n,x)$ for all $x \in U$, hence $k=k'$ as stated previously.
One inclusion ($C(D_n,x)  \subset C(\CC_\rho,x) $) is obvious, since $D_n \subset
\CC_\rho$. Now take $y$ in $C(\CC_\rho,x)$, so $y \in \CC_\rho$ and $y x = x y$;
since $U = \beta \CC_\rho$, one can write $x = \beta \ z$ for some $z \in \CC_\rho$,
and the commutation property reads 
$y \beta z = \beta z y$, equivalently $y = \beta (z y z^{-1}) \beta^{-1}$. But $z y
z^{-1} \in \CC_\rho$ so $y \in \CC_\rho^\beta$. The conclusion is that $y \in
\CC_\rho \cap \CC_\rho^\beta$, but the latter subgroup coincides with $D_n$
(this way of defining $D_n$ was used in Sect.\ \ref{Y} and \ref{gaugefixing}).
So we have also $C(\CC_\rho,x) \subset C(D_n,x)$, hence the equality.
\end{proof}

 Proposition \ref{prop4} has a practical value:  identifying distincts orbits of $Y^\prime$ under the adjoint action of $\CC_\rho$ is a time-consuming task that
is replaced by the calculation of the order of a (small) finite group associated with the elements of a left coset $U$ of that group: this is much faster.
The result is illustrated on the following  
example : With $k$, the order of the centralizer $C(D_n, x)$ of $x$ in $U_0$ (the subset of the permutations of genus $0$ belonging to the left coset $U$),  and using the notation  $k^{\mu(k) = \# \text{orbits of length} |D_n|/k}$, one obtains, for $n=5$, the following  sizes and numbers of $D_n$ orbits:  $1^{1640} \ 2 ^{150} \ 5^{4}\ 10^{2}$, with $\vert U_0 \vert = 1796$, the number of long (open) spherical curves. 
The number and sizes of orbits of $Y^\prime$ under the adjoint action of the group $\CC_\rho$
is given by a similar formula with the ``exponent" multiplied by the correcting factor $k/2n$,
 so that we get, instead, $1^{164}\ 2^{30}\ 5^{2}\ 10^{2}$, for a total of $198$ orbits (UOc bicoloured spherical immersions). A similar analysis can be done if we replace the dihedral subgroup $D_n$ by its cyclic subgroup $Z_n$, the correcting factor being this time equal to $k/n$: we have $1^{1790} 5^{6}$ $Z_n$-orbits  in $U_0$ and $1^{358} 5^{6}$ $Z_n$-orbits in $Y^\prime$, for a total of $364$ orbits (OOc  bicoloured spherical immersions).
\\ [-10pt]
\paragraph{Typical CPU time ($T$) and memory ($M$)} for calculations done on a MacBookPro 2.8 GHz Intel Core i7, leading to the results given in Table \ref{TableXYZ1} (bi{\colour}able and/or bi{\colour}ed immersions) are as follows:
$n  \leq 4: T < 0.4 \, s, \; M < 32 \, \text{MB}$; 
$n = 5: T = 0.63 \,  s, \; M < 32 \, \text{MB}$;
$n = 6: T = 4.37  \, s, \; M < 32 \, \text{MB}$;
$n = 7: T = 70.64 \,  s, \; M = 116.88 \, \text{MB}$;
$n = 8: T =  3285  \, s, \; M = 1316.81 \, \text{MB}$. 
For $n=9$, calculations were done genus by genus on a faster machine, with a large amount of available random access memory, but the results for each genus nevertheless required several hours of computer time.
For $n=10$, the enumerative algorithm was traded for a sampling method (see above), implemented in Mathematica, and required several weeks of CPU.
 With the exception of the total number of immersions (summing over genera) of all types, obtained (up to $n=20$) by a fast algorithm using double cosets, 
 calculations leading to Table \ref{TableXYZ2} (general immersions) are significantly slower and use more memory than the previous ones because we use the whole class of cyclic permutation (growing like $(2n-1)!$ for $n$ crossings).
 They could nevertheless be performed with enumerative methods up to $n=9$. Typical values are as follows: 
 $n = 6: T = 12  \, s, \; M < 32 \, \text{MB}$; $n = 7: T = 340 \,  s, \; M = 258 \, \text{MB}$;
 $n = 8: T =  6293  \, s, \; M = 4934 \, \text{MB}$; $n = 9: T = 106893  \, s, \; M
 = 124.5 \, \text{GB}$. 

\vfill \eject
\section*{Appendix B}
{\bf B.1 The five parameters $x_{sm},\,y_{sm},\,z_{sm},\,v_{sm},\, w_{sm}$ for the $\CC_\rho$-orbits of $Y'$ (or for the $D_n$-orbits of $U$)}
{\qquad} \\[-56pt]
\begin{center}
\[
\begin{array}{cc}
\begin{array}{cc}
x_{sm},y_{sm},z_{sm},v_{sm},w_{sm} & {} \\[-10pt]
 \\[\smallskipamount]
 \smat{1&0&0&0&0}   & n=1 \\[-5pt]
 \\[\smallskipamount]
 \smat{1&0&0&1&0}   & n=2 \\[-5pt]
  \\[\smallskipamount]
   \smat{ 0& 0& 0& 6& 0 \cr 0& 0& 0& 1& 0\cr} & n=3 \\[-5pt]
  \\[\smallskipamount]
\smat{ 5 & 0 & 0 & 12 & 2 \cr
 0 & 0 & 0 & 4 & 2 \cr
 1 & 0 & 0 & 0 & 0 \cr} & n=4 \\
  \\[\smallskipamount]
 \smat{0 & 0 & 0 & 53 & 23 \\
 0 & 0 & 0 & 33 & 30 \\
 0 & 0 & 0 & 8 & 5 }  & n=5 \\
 \ommit{ \\[\smallskipamount]
  \smat{9 & 12 & 3 & 152 & 200 \\
 0 & 0 & 0 & 133 & 406 \\
 7 & 10 & 6 & 50 & 169 \\
 0 & 0 & 0 & 3 & 8} & n=6}
\end{array}
&
\qquad\qquad
\begin{array}{cc}
{} & {} \\
\smat{9 & 12 & 3 & 152 & 200 \\
 0 & 0 & 0 & 133 & 406 \\
 7 & 10 & 6 & 50 & 169 \\
 0 & 0 & 0 & 3 & 8} & n=6\\
 \\[\smallskipamount]
\smat{0 & 0 & 0 & 559 & 1635 \\
 0 & 0 & 0 & 758 & 4750 \\
 0 & 0 & 0 & 460 & 3723 \\
 0 & 0 & 0 & 84 & 579 } & n=7 \\
  \\[\smallskipamount]
\smat{39 & 105 & 29 & 1756 & 12685 \\
 0 & 0 & 0 & 3042 & 53025 \\
 47 & 228 & 104 & 2500 & 67239 \\
 0 & 0 & 0 & 725 & 23182 \\
 10 & 39 & 21 & 29 & 937 } & n=8 \\
  \\[\smallskipamount]
\smat{ 0& 0& 0& 6299& 100122\\0& 0& 0& 14861& 577596\\0& 0& 0& 16601& 
  1102579 \\ 0& 0& 0& 8004& 684745\\0& 0& 0& 1180& 93539} & n=9 
\end{array}
 \end{array}
 \]
\end{center}
where, for each $n$, successive rows correspond to increasing genus \\
 \phantom{x}\hspace{5ex} $g=0,1\cdots, \lfloor\frac{n}{2}\rfloor$ (for $n>2 $).

  For $n=10$ we have only the genus 0 data : \\
\phantom{x}\hspace{5ex}  $(x_{sm},y_{sm},z_{sm},v_{sm},w_{sm})_{g=0}=\smat{98, 969, 247, 20681, 801837}\qquad n=10\,.$

\vskip 0.8cm

{\bf B.2 The five parameters $x_{sr},\,y_{sr},\,z_{sr},\,v_{sr},\, w_{sr}$ for the $Z_n$-orbits of~$U$}
{\qquad} \\[-15mm]
 \begin{center}
\[
\begin{array}{cc}
\begin{array}{cc}
x_{sr},\,y_{sr},\,z_{sr},\,v_{sr},\, w_{sr} & {} \\
  \\[\smallskipamount]
 \smat {0&0&1&0&0}  & n=1 \\
   \\[\smallskipamount]
 \smat {0&0&0&1&1}  & n=2 \\
     \\[\smallskipamount]
 \smat{ 0& 0& 3& 0& 3\cr  0& 0& 1& 0& 0\cr}  & n=3 \\
   \\[\smallskipamount]
 \smat{0& 0& 0& 5& 16 \cr 0& 0& 0& 0& 8 \cr 0& 0& 0& 1& 0 \cr} & n=4 \\
   \\[\smallskipamount]
\smat{0& 0& 16& 0& 83\cr 0& 0& 16& 0& 77\cr 0& 0& 0& 0& 18 \cr}  & n=5 \\
\end{array}
&
\qquad\qquad \begin{array}{cc}
{} & {} \\
\smat{0& 0& 0& 33& 555\cr 0& 0& 0& 0& 945\cr 0& 0& 0& 27& 394\cr 0& 0& 0& 0& 19\cr} & n=6 \\
  \\[\smallskipamount]
\smat {0& 0& 105& 0& 3724\cr 0& 0& 210& 0& 10048\cr 0& 0& 57& 0& 7849\cr 0& 0& 12& 0& 1230}  & n=7 \\
  \\[\smallskipamount]
\smat{
0 & 0 & 0 & 249 & 27155 \cr
 0 & 0 & 0 & 0 & 109092 \cr
 0 & 0 & 0 & 503 & 137082 \cr
 0 & 0 & 0 & 0 & 47089 \cr
 0 & 0 & 0 & 88 & 1924
} & n=8 \\
  \\[\smallskipamount]
 \smat{
0 & 0 & 780 & 0 & 205763 \cr
 0 & 0 & 2600 & 0 & 1167453 \cr
 0 & 0 & 1708  & 0 & 2220051 \cr
 0 & 0 & 928 & 0 & 1376566 \cr
 0 & 0 & 128 & 0 & 188130
 } & n=9
 \end{array}
  \end{array}
 \]
 \end{center}
 
  \vskip 3.cm
{\bf B.3 The five parameters\footnote{They should not be confused with those of Appendix B.1}  $x_{sm},\,y_{sm},\,z_{sm},\,v_{sm},\, w_{sm}$ for the $Z_n$-orbits of $U$}

\vskip 0.3cm
{\qquad} \\[-21mm]
 \begin{center}
\[
\begin{array}{cc}
\begin{array}{cc}
x_{sm},\,y_{sm},\,z_{sm},\,v_{sm},\, w_{sm} & {}  \\
  \\[\smallskipamount]
\smat{0&0&0&1&0}  & n=1 \\
  \\[\smallskipamount]
 \smat{0&0&1&0&1}  & n=2 \\
   \\[\smallskipamount]
 \smat{0 & 0 & 0 & 3 & 3 \cr  0 & 0 & 0 & 1 & 0 \cr}  & n=3 \\
   \\[\smallskipamount]
 \smat{ 0 & 0 & 5 & 0 & 16 \cr  0 & 0 & 0 & 0 & 8 \cr  0 & 0 & 1 & 0 & 0 \cr}  &  n=4 \\
   \\[\smallskipamount]
\smat{0 & 0 & 0 & 12 & 85 \\
 0 & 0 & 0 & 16 & 77 \\
 0 & 0 & 0 & 4 & 16}  &  n=5 
 \end{array}
 &
\qquad\qquad \begin{array}{cc}
{} & {} \\
 \smat{0 & 0 & 15 & 0 & 564 \\
 0 & 0 & 0 & 0 & 945 \\
 0 & 0 & 19 & 0 & 398 \\
 0 & 0 & 0 & 0 & 19 } &  n=6 \\
   \\[\smallskipamount]
\smat{0 & 0 & 0 & 71 & 3741 \\
 0 & 0 & 0 & 206 & 10050 \\
 0 & 0 & 0 & 141 & 7807 \\
 0 & 0 & 0 & 48 & 1212 }  &  n=7 \\
   \\[\smallskipamount]
 \smat{
 0 & 0 & 97 & 0 & 27231 \\
 0 & 0 & 0  & 0 & 109092 \\
 0 & 0 & 255 & 0 & 137206 \\
 0 & 0 & 0 & 0 & 47089 \\
 0 & 0 & 52 & 0 & 1942}  &  n=8 \\
   \\[\smallskipamount]
  \smat{
 0 & 0 & 0 & 514 & 205896 \\
 0 & 0 & 0  & 2498 & 1167504 \\
 0 & 0 & 0 & 3314 & 2219248 \\
 0 & 0 & 0 & 2452 &  1375804 \\
 0 & 0 & 0 & 616 & 187886  } &  n=9 
 \end{array}
 \end{array}
 \]
 \end{center}
where, for each $n\!\!\!>\!\!\!2$, successive rows correspond to increasing genus 
  $g=0,1\cdots, \lfloor\frac{n}{2}\rfloor$. 
 
 \vskip 1.cm

{\bf B.4 The five parameters $x_{rm},\,y_{rm},\,z_{rm},\,v_{rm},\, w_{rm}$ for the $S_n$-orbits of $Z^\prime$}

\vskip 0.3cm
 {\qquad} \\[-16mm]
  \begin{center}
\[
\begin{array}{cc}
\begin{array}{cc}
 x_{rm},\,y_{rm},\,z_{rm},\,v_{rm},\, w_{rm} & {} \\[-15pt]
    \\[\smallskipamount]
   \smat{ 1 & 0 & 0 & 0 & 0 } & n=1 \\[-8pt]
     \\[\smallskipamount]
   \smat{ 1 & 0 & 0 & 1 & 0 \\
 1 & 0 & 0 & 0 & 0} & n=2 \\[-8pt]
      \\[\smallskipamount]
    \smat{3 & 0 & 0 & 3 & 0 \\
 1 & 0 & 0 & 3 & 1 \\
 0 & 0 & 1 & 0 & 0 } & n=3 \\[-8pt]
       \\[\smallskipamount]
\smat{5 & 0 & 0 & 12 & 2 \\
 7 & 4 & 2 & 17 & 15 \\
 2 & 1 & 2 & 4 & 13 } & n=4 \\[-8pt]
       \\[\smallskipamount]
     \smat{10 & 3 & 1 & 42 & 20 \\
 10 & 3 & 3 & 98 & 221 \\
 4 & 6 & 22 & 56 & 339 \\
 0 & 0 & 4 & 0 & 52 } & n=5
  \end{array}
\qquad\qquad \begin{array}{cc}
  {} & {}  \\
   \smat{9 & 12 & 3 & 152 & 200 \\
 34 & 82 & 40 & 472 & 2473 \\
 25 & 58 & 72 & 473 & 6929 \\
 12 & 48 & 49 & 79 & 3493 } & n=6 \\
      \\[\smallskipamount]
    \smat{ 35& 35& 18& 506& 1600\cr 60& 75& 73& 2169& 27038\cr 53& 182& 421& 3272& 120991\cr 12& 60& 353& 1397& 137616\cr 0& 48& 116& 0& 17234}  & n=7 \\
       \\[\smallskipamount]
     \smat{39& 105& 29& 1756& 12685\cr 160& 1165& 514& 9533& 284487\cr
199& 1529& 1571& 20024& 1937923\cr 194& 2921& 2456& 15177& 4078227\cr  36& 686& 1242& 2092& 1764648} & n=8 
  \end{array}
 \end{array}
 \]
 \end{center}
 
and in genus 0, 
$$ n=9 \qquad (x_{rm},y_{rm},z_{rm},v_{rm},w_{rm})=\smat{124& 328& 195& 5980& 99794}\,, $$
$$n=10\qquad (x_{rm},y_{rm},z_{rm},v_{rm},w_{rm})=\smat{98& 969& 247& 20681& 801837}\,.$$ 
\def\mapover#1{\buildrel #1 \over{\mapsto}}


\section*{Appendix C}
The following appendix recalls how certain integrals over real, complex or matrix variables enable one,
through their Feynman diagram interpretation, to construct generating functions of maps and in some cases, 
to compute the cardinals of some classes of maps.

\subsection*{C.1 The diagrammatic expansion of matrix integrals}

Let us consider the integral over a set of $f$ $N\times N$ Hermitian matrices $M_a$, $a=1,\cdots, f$
\be\label{mat-int} 
Z_X=\int\Big(\prod_{a=1}^f  DM_a\Big)\, \exp-N  [ \inv{2} \sum_{a=1}^f  \tr(M_a)^2 -\frac{\Gg}{4}\sum_{a,b=1}^f \tr(M_a M_b)^2]
\ee
(initially defined for $\Re \Gg \le 0$ and implicitly normalized by dividing by the Gaussian integral at $\Gg=0$). 
The measure $DM$ is the natural integration measure over Hermitian matrices, $DM=\prod_{i=1}^N {\rm d} M_{ii} 
\prod_{i<j}  {\rm d} \Re M_{ij}\,  {\rm d} \Im M_{ij}$.  
The integrand and the measure are clearly invariant under orthogonal 
transformations of the $M$'s, $M_a\mapsto M'_a=\sum_{a'=1}^fO_{aa'} M_{a'}$,
 $O\in {\rm O}(f)$.

We are mostly interested in the series expansion in powers of $\Gg$ of 
$F=
\log Z_X$\,, 
the ``free energy" in physicists'  parlance,
\be\label{free-en}  F=\sum_{n=1}^\infty \gamma^n F_n \,.\ee
 This expansion may be obtained by diagrammatic rules, in terms of 4-valent connected maps. 
As is well known since 't Hooft \cite{tH}, it is fruitful to represent Feynman diagrams arising from the expansion 
of $F$ with double lines, associated with the matrix indices of the  $M$'s;  
for reviews, see\cite{BIZ,Zv,Bouttier11}. The resulting diagrams, sometimes
called ``fat graphs", are in fact maps in the combinatorial sense; moreover, here,
each line across a vertex is decorated with an index $a$ (or $b$) running over $f$ values, referred to as 
``flavor". This flavor will enable us to identify the number of components when we regard the map 
as a multi-component curve or an alternating  link or knot diagram.

 \begin{figure}[htbp]
 \centering
\raisebox{-2ex}{\includegraphics[width=6pc]{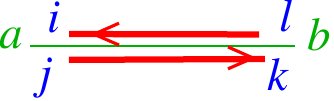}}$\ =\ \inv{N}\delta_{ab}\delta_{il}\delta_{jk}$\qquad 
\raisebox{-4ex}{\includegraphics[width=4pc,  height = 4pc]{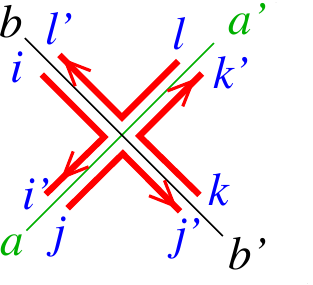}}$\ = \gamma N\delta_{aa'} \delta_{bb'} \delta_{ii'} \delta_{jj'} \delta_{kk'} \delta_{ll'} $. 
\caption{\label{vertex} Feynman rules}
\end{figure}

The diagrammatic rules are the following:  for a given map, to each vertex, assign a weight $\frac{\Gg}{4}N$; to 
each ``component", assign a weight $f$ (arising  from the summation over the running index $a$); 
to each ``index loop", \ie each face of the map,
assign a weight $N$ (arising from the summation over  matrix indices $i,j=1,\cdots N$);
and to each edge, a factor $N^{-1}$.
Each map then carries a power of $N$ equal to the Euler characteristics of the closed compact orientable
Riemann surface spanned by its faces, namely $N^{2-2g}$.

If $F$ in (\ref{free-en}) is written as 
\be F=\sum_{\g\ge 0}\sum_{c\ge 1}  N^{2-2\g} f^c\, \sum_{n\ge 1}  \Gg^n F_n^{(\g,c)}\ee
then 
$ $ is by the previous rules
the product of $\inv{n!} \(\frac{1}{4}\)^n$ times the number of {\it labelled maps} with genus $g$, $n$ vertices and $c$ components.
In other words  $F^{(\g,c)}(\Gg):= \sum_n \Gg^n F_n^{(\g,c)}$ is  the exponential generating function of labelled maps of given genus $g$ and number $c$ of
components, and with $n$ vertices. 
In the present paper, we are focusing on one-component diagrams, 
whose generating function is
\be\label{F1c} F^{[{\rm 1c}]}= f \sum_{n\ge 1}\Gg^n \sum_{\g\ge 0}   N^{2-2\g}\,  F_n^{(\g,1)}\,.\ee
In the formalism of Sect.\ \ref{Xpmethod}, 
\be F^{(g,1)}(\Gg)= \sum_{n\ge 1} \inv{n!} \(\frac{\Gg}{4}\)^n 
\#\{\tau \ {\rm satisfying\ (I) \ and\ (II){}_g}\}\ee
hence
\be\label{Fng1} F_n^{(g,1)}= \inv{4^n n! } |X'_{g {n}} | =\sum_{\CC_\sigma{\rm -orbits}\ o\atop {\rm of}\ X'_g}  \frac{\ell_o}{4^n n! }
 = \sum_{{\rm orbits}\ o\atop {\rm of}\ X'_g}   \inv{d_o}\ee
 with now a sum over {\it $\CC_\sigma$-orbits} $o$, \ie {\it unlabelled} maps, of length $\ell_o$. 
Thus $d_o=\frac{4^n n!}{\ell_0}=\frac{|\CC_\sigma|}{\ell_o}$, the ``symmetry factor" in Feynman rules, is the order of the stabilizer group of the orbit $o$. 
As an independent argument shows, (see for example Sect.\ \ref{gaugefixing}.c),  
$d_o$ turns out to be a divisor of $2n$.

For genus $g=0$  (planar maps), the first terms of the series expansion  (\ref{F1c}) read
\begin{equation}
\begin{split}
{ \inv{f N^2} F^{[{\rm pl,1c}]}}:=\sum_n \Gg^n F_n^{(0,1)}
     =& \inv{4}{2} \Gg + \inv{4^2 2!} {32} \Gg^2 + \inv{4^3 3!} {1344}  \Gg^3 + \inv{4^4 4!} {99\,840} \Gg^4\\
     {}&+\inv{4^5 5!}{11\,034\,624}\Gg^5+\cdots
        \end{split}
     \end{equation}
      and more terms appear in Tables \ref{TableX} and \ref{TableY}.
    
 Unfortunately, there exists  no closed formula for this series, in contrast with the cases $f=1$ for which we have Tutte's result (\cite{Tutte},
 see also  \cite{BIPZ} or equ.\ (3.9) of \cite{ZJZ}).
    \be 
  \lim_{N\to \infty} \inv{N^2}   F^{[{\rm pl}]}=\sum_{n\ge 1}\Gg^n  \sum_{c\ge 1}   \, F_n^{(0,c)}   = \sum_{n=1}^\infty (3\Gg)^n \frac{(2n-1)!}{n! (n+2)!}
      \ee
     or with the sum over all genera of one-component maps (\ie the case $N=1$ of (\ref{F1c}))
    \be   F^{[{\rm1c}]}\Big|_{N=1}   =f\sum_{n=1}^\infty 
   \(\frac{\Gg}{4}\)^n  \frac{(4n-2)!!}{n!}\qquad \hbox{see below equ.\ (\ref{F1cb} )}   
   \ee

\paragraph{Two other matrix integrals.}
The reader will convince him/her-self that the case of bi{\colour}able curves or of alternating knots and links is related 
in the same way to another matrix integral,
\be\label{matrix-int-2} 
Z_Y=\int\Big(\prod_{a=1}^f  D(M_a, M_a^\dagger)\Big)\, \exp-N  [  \sum_{a=1}^f  \tr(M_a M_a^\dagger) -
\frac{\Gg}{4}\sum_{a,b=1}^f \tr(M_a M_b^\dagger)^2]
\ee
with now an integration over complex $N\times N$ matrices. 
{}From the fact that any {\it planar} map may be bi{\colour}ed in two different ways, it follows that 
the free energy $\log Z_Y$   coincides up to a factor 2  with $F=\log Z_X$ considered above.
Thus, using the formalism of Sect. \ref{Ypmethod}, 
\bea F_n^{(0,1)}&=&{\frac{1}{2 (2n)!}} \#\{\sigma,\tau\in S_{2n} \,|\, \rho\in[2^n] \cap {\rm(I')} \,\cap\, {\rm (II')}_0 \} \\
&=&(2n-1)!! \frac{1}{2 (2n)!} \#\{\sigma\in S_{2n} \,|\,  {\rm(I')} \,\cap\, {\rm (II')}_0\  {\rm with} \ \rho=\rho_0\,,\ 
\tau=\sigma \rho \}\,,
\eea
where in the first line the factor $2 (2n)!$ comes from the 
two possible bi{\colour}ations along with a general relabelling of the $2n$ edges, and in the second, the factor $(2n-1)!!$ comes
 from the possible choices of $\rho$, (pairings at vertices).
    The bottom line of Table \ref{TableY} is that $F_n^{(0,1)}=\inv{2^{n+1}n!} \#\{\sigma\cdots\}$. (Fortunately 
    the results coincide with those of Table \ref{TableX} !)

Finally the counting of general oriented curves in Sect.\ \ref{Zpmethod} is related to the following integral
\be\label{matrix-int-3} 
Z_Z=\int\Big(\prod_{a=1}^f  D(M_a, M_a^\dagger)\Big)\, \exp-N  [  \sum_{a=1}^f  \tr(M_a M_a^\dagger) -
\frac{\Gg}{4}\sum_{a,b=1}^f \tr(M_a M_b  M_a^\dagger M_b^\dagger)]\,.
\ee
For {\it one-component} maps, there are two ways of orienting the corresponding curve, hence the 
free energy $F_Z^{[{\rm 1c}]}=\log Z_Z\Big|_{{\rm term}\ f^1}$   coincides up to a factor 2  with $F^{}$ considered above, 
for any genus.

\subsection*{C.2 The cardinal of $X'$ through a simple integral}
\label{simple-int}
In this section we compute the cardinal of the set $X'$ of Sect.\ \ref{Ypmethod} through a simple integral. Recall
that $X'$ gathers  maps of all genera. 
Since we are not concerned by the genus of the graph/map, we may use an integration over real vectors
$\phi$ of $\R^f$ rather than matrices, \ie the case $N=1$ of the integral (\ref{mat-int}). Let
\be Z= (2\pi)^{-f/2} \int d^f\!\phi\, \exp[-\oh \phi^2+\frac{\gamma}{4}(\phi^2)^2] \ee
in which the terms linear in $f$ yield the contribution of one-component graphs.
As above, we assume that $\Re \gamma <0$ and we have explicitly normalized $Z$ to be 1 for 
$\gamma=0$. Following a standard trick, we rewrite $Z$ as
\be Z = 
\int_\R \frac{d\alpha}{\sqrt{\pi}} e^{-\alpha^2} \, \int_{\R^f}\,\frac{d^f \!\phi}{(2\pi)^{f/2}} \, \exp[-\oh \phi^2 (1+2 i \sqrt{-\Gg} \alpha)]\,.\ee
 Integrating over the $f$-dimensional $\phi$ gives
\be Z =\int_\R \frac{d\alpha}{\sqrt{\pi}} e^{-\alpha^2}  (1+2i \sqrt{-\Gg}\alpha)^{-f/2}\,.\ee 
In the series expansion of the term  $(1+2i \sqrt{-\Gg}\alpha)^{-f/2}$, 
we keep only the term of order $f^1$, hence
\be  Z\Big|_{f\ {\rm term}}  = 
\frac{f}{2} \sum_{n\ge 1}  \frac{(2i\sqrt{-\Gg})^n \langle  \alpha^n\rangle}{n}\ee
where $\langle \alpha^m \rangle$ denote  the moments of the Gaussian measure  
$\frac{d\alpha}{\sqrt{\pi}} e^{-\alpha^2}$.
Only even moments are non vanishing 
and we find
\be  Z\Big|_{f\ {\rm term}} = f \sum_{n=1} \frac{ (2\gamma)^n}{4n} (2n-1)!! \ee
which may be recast as 
\be\label{F1cb}   Z\Big|_{f\ {\rm term}} =F^{[{\rm1c}]}\Big|_{N=1}  
=f \sum_{n=1} \inv{n!} \(\frac{\gamma}{4}\)^n  (4n-2)!!\,. \ee
By comparing this calculation with formula (\ref{Fng1}) one sees that the coefficient $(4n-2)!!$ is nothing else 
than the number of points in the set $X^\prime$. See also next Appendix for a direct combinatorial argument. 


\subsection*{C.3 The set $X'\subset [2^{2n}]$}\label{setXp}
In this section, we reproduce the previous result on $\card{X'}$ by a purely combinatorial argument. 
The set $X^\prime$ of permutations $\tau\in [2^{2n}]$  that satisfy
$\sigma^2\,\tau\in [(2n)^2]$  may be constructed explicitly. We choose $\sigma=(1324)\cdots (4n-3,4n-1,4n-2,4n)$ so that 
$\sigma^2=(12)(34)\cdots (4n-1,4n)$. We note that $i_1:=\tau(1)$ is different from 1 (since $\tau\in[2^{2n}]$), and
from $2=\sigma^2(1)$, otherwise $\sigma^2\,\tau$ would have a 1-cycle. We thus have $4n-2$ possible choices for $i_1$.
\\
By recursion, suppose that after $r< 2n$ iterations, we  choose 
$ i_r :=\tau(\sigma^2(i_{r-1}))$ different from 
$ i_0:=2, i_1,\cdots, i_{r-1}:=\tau(\sigma^2(i_{r-2}))$ and from their images by $\sigma^2$, 
with these $2r$ numbers assumed to be all different: we thus have $4n-2r$ choices for $i_r$. 
\\
Then for any $0\le s\le r-1$,
$\sigma^2(i_r) \ne \sigma^2(i_s)$  since $i_r\ne i_s$; and $\sigma^2(i_r) \ne i_s \Leftrightarrow i_r\ne \sigma^2(i_s)$ by
assumption. \\
Moreover $i_{r+1}:=\tau\,\sigma^2(i_r)=(\tau\,\sigma^2)^{r+1}(2)$ must be different from the $2(r+1)$ numbers 
$ i_0:=2, i_1,\cdots, i_{r}$ and their images by $\sigma^2$:
\bee 
\item   
for $0\le s \le r-1$, $i_{r+1}=\tau\,\sigma^2(i_r) \ne \sigma^2(i_s) \Longleftrightarrow \sigma^2(i_r) \ne \tau\sigma^2(i_s)=
i_{s+1} \Longleftrightarrow i_r\ne \sigma^2(i_{s+1})$ by the assumption on $i_r$ for $s<r-1$, and the fact that $\sigma^2$ has no
fixed point for $s=r-1$;
\item $i_{r+1}=\tau\,\sigma^2(i_r) \ne \sigma^2(i_r)$ since $\tau$ has no  fixed point;
\item for $1\le s \le r$, $i_{r+1}=\tau\,\sigma^2(i_r) \ne i_s=\tau(\sigma^2(i_{s-1}))$ since $i_r\ne i_{s-1}$;
\item finally,  $i_{r+1}=\tau\,\sigma^2(i_r)=(\tau\,\sigma^2)^{r+1}(2)$ may be equal to $i_0=2$  iff 
2 is a fixed point of $(\tau\,\sigma^2)^{r+1}$, which occurs iff $r+1=2n$ (remember that $\tau\,\sigma^2\in [(2n)^2]$). 
\eee
Hence, for $r<2n-1$, the recursion assumption is verified, and there are $4n-2(r+1)$ choices for $i_{r+1}$. 
At the end of this iterateve procedure we have constructed a 
$\tau=((1,i_1),(\sigma^2(i_1),i_2),\cdots, (\sigma^2(i_{2n-1}),i_{2n})$, and all $\tau\in X'$ are obtained that way. 
This completes the construction of the set $X^\prime$ and the proof that $\card{ X'}=\prod_{r=0}^{2n-2} (4n-2(r+1))= (4n-2)!!$.  


\subsection*{C.4 The set $Y'\subset S_{2n}$}
\label{thesetYp}
By lack of a direct combinatorial construction of the set $Y'$ (as we had for $X'$, see previous appendix), we resort 
again to a simple integral to compute the cardinality of $Y'$.
\def\bz{\bar z}\def\ba{\bar \alpha}
In the same spirit as in App.C.2, let us consider the integral over  vectors of $\C^f$
\be Z=\inv{\pi^f}\int d^f(z,\bz) \exp[-z\cdot \bz +\frac{\gamma}{4} z^2 \bz^2]\ee
where $z\cdot \bz=\sum_{a=1}^f z_a \bz_a$, $z^2:=\sum_{a=1}^f (z_a)^2$ and likewise for  $\bz^2$.
Note that this may be regarded as the $N=1$ version of integral (\ref{matrix-int-2}). 
We take $\gamma<0$ to ensure convergence.
Using again the same trick, we rewrite $Z$, up to a factor, as
\be Z= \int \frac{d(\alpha,\ba)}{\pi} e^{-\alpha\ba}\,
\int \frac{d^f(z,\bz)}{\pi^f} \exp[-z\cdot\bz -
-i\sqrt{\frac{\Gg}{4}} z^2 \ba -i \sqrt{\frac{\Gg}{4}}\bz^2 \alpha]\ee
which upon integration over $z,\bz$ gives
\be Z= \int \frac{d(\alpha,\ba)}{\pi} e^{-\alpha\ba} (1-\Gg \alpha\ba)^{-f/2}\ee
Keeping again the term of order $f^1$ in the expansion of $(1-\Gg \alpha\ba)^{-f/2}$ gives
\be Z\Big|_{f\ {\rm term}}=
 \frac{f}{2} \sum_{n\ge 1} \frac{\langle(\alpha\ba)^n\rangle}{n} \ee
 where  $\langle(\alpha\ba)^n\rangle=n!$  are the moments of
  the measure   $\frac{d(\alpha,\ba)}{\pi} e^{-\alpha\ba}$, 
hence 
\be Z\Big|_{f\ {\rm term}}=f\sum_{n=1} \inv{n!} \Big(\frac{\gamma}{4}\Big)^n  2^{2n-1} n! (n-1)! \ee
which (in view of the diagrammatic interpretation \`a la Feynman of this computation) shows 
that the number of points in the set $Y'$ is indeed $2^{2n-1} n! (n-1)! =(2n)!! (2n-2)!!$. 

As a little exercise left to the reader, one may check that the same reasoning applied to integral
(\ref{matrix-int-3}), \ie consideration of the integral
\be Z=\int d^f(z,\bz) \exp[-z\cdot \bz +\frac{\gamma}{4} (z\cdot\bz)^2]\ee
and computation of its  $f^1$ term 
will reproduce the counting of points in the set $Z'$ of Sect. \ref{Zpmethod}, namely $(2n-1)!$.

\section*{Acknowledgements} 
We acknowledge useful discussions with Alina Vdovina and Paul Zinn-Justin 
and a very stimulating correspondence with Guy Valette.  
Partial support from the EPLANET network and from IMPA (Rio de Janeiro) is also acknowledged.


\vfill \eject
   
 \end{document}